\newtheorem{satz}{Theorem}[section]
\newtheorem{lemma}[satz]{Lemma}
\newtheorem{kor}[satz]{Corollary}
\theoremstyle{definition}
\newtheorem{definition}[satz]{Definition}
\newtheorem{bemerkung}[satz]{Remark}
\definecolor{white}{rgb}{1,1,1}
\definecolor{darkred}{rgb}{0.3,0,0}
\definecolor{darkgreen}{rgb}{0,0.3,0}
\definecolor{darkblue}{rgb}{0,0,0.3}
\definecolor{pink}{rgb}{0.78,0.09,0.51}
\definecolor{purple}{rgb}{0.28,0.24,0.55}
\definecolor{orange}{rgb}{1,0.6,0.0}
\definecolor{grey}{rgb}{0.4,0.4,0.4}
\definecolor{aquamarine}{rgb}{0.4,0.8,0.65}
\newcommand{\N}{\mathbb{N}}
\newcommand{\R}{\mathbb{R}}
\newcommand{\norm}[2]{\left\Vert #2 \right\Vert_{L^{#1}(\Omega)}}
\newcommand{\wnorm}[1]{\left\Vert #1 \right\Vert_{W^{1,q}(\Omega)}}
\newcommand{\Norm}[2]{\left\Vert #2 \right\Vert_{#1}}
\newcommand{\tel}[1]{\frac{1}{#1}}
\newcommand{\diff}{\mathop{}\!\mathrm{d}}
\newcommand{\dt}{\frac{\diff}{\diff t}}
\newcommand{\io}{\int_\Omega}
\newcommand{\euler}{\mathrm{e}}
\newcommand{\ubar}[1]{\underaccent{\bar}{#1}}
\newcommand{\eps}{\varepsilon}
\newcommand{\ueps}{u_{\eps}}
\newcommand{\veps}{v_{\eps}}
\newcommand{\ue}{u_{\eps}}
\newcommand{\ve}{v_{\eps}}
\newcommand{\uj}{u_{{\eps}_j}}
\newcommand{\vj}{v_{{\eps}_j}}
\newcommand{\Om}{\Omega}
\newcommand{\Ombar}{\overline{\Omega}}
\newcommand{\f}[2]{\frac{#1}{#2}}
\newcommand{\na}{\nabla}
\newcommand{\nn}{\nonumber}
\newcommand{\kl}[1]{\left(#1\right)}
\title{On the global generalized solvability of a chemotaxis model with signal absorption and logistic growth terms}
\author{Elisa Lankeit\\
 {\small Institut f\"ur Mathematik, 
 Universit\"at Paderborn}\\[-0.2cm]
 {\small  Warburger Str. 100}\\[-0.2cm]
 {\small  33098 Paderborn, Germany}\\
{\small elankeit@math.uni-paderborn.de }
 \and
Johannes Lankeit\\
 {\small Institut f\"ur Mathematik, 
 Universit\"at Paderborn}\\[-0.2cm]
 {\small  Warburger Str. 100}\\[-0.2cm]
 {\small  33098 Paderborn, Germany} \\
{\small jlankeit@math.uni-paderborn.de}
}
\begin{document}

\maketitle 
\begin{abstract}
\noindent 
 Introducing a suitable solution concept, we show that in bounded smooth domains $\Omega\subset \mathbb{R}^n$, $n\ge 1$, the initial boundary value problem for the chemotaxis system 
\begin{align*}
 u_t&=\Delta u -\chi\nabla\cdot\left(\frac{u}{v}\nabla v\right)+\kappa u -\mu u^2,\\
 v_t&=\Delta v -uv,
\end{align*}
with homogeneous Neumann boundary conditions and widely arbitrary initial data has a generalized global solution for any $\mu, \kappa, \chi >0$.

\noindent\textbf{Keywords:} chemotaxis; generalized solution; singular sensitivity; signal consumption; global existence;  logistic source\\
\noindent\textbf{MSC (2010):} 35Q92; 35K51; 35A01; 92C17; 35D99

\end{abstract}

\section{Introduction}
In the study of chemotaxis systems one of the leading mathematical questions usually is: Does this model admit solutions blowing up (within finite or after infinite time) or are all solutions global and bounded? 
For many systems, the possibility of blow-up is known; for many others, solutions are known to remain bounded (for a multitude of results in both directions consult, e.g., the surveys \cite{BBTW,horstmann}). In between, there still lies a large unchartered territory of models of which it is entirely unknown which of the two mentioned cases they belong to. 

For example, in the systems 
\begin{equation}\label{intro:productionsystem}
 \begin{cases}
  u_t=Δu-χ\nabla\cdot(\f{u}v\na v)\\
  τv_t=Δv - v + u,
 \end{cases}
\end{equation}
describing the prototypical situation of self-aggregating behaviour of cells emitting a signal substance they chemotactically follow in accordance with a singular shape of the sensitivity function (see \cite{kellersegel_trav,kalinin_jiang_tu_wu}), in bounded $n$-dimensional domains and with $τ=1$, it is known that in the case of sufficiently small values of $χ<χ_0(n)$ with $χ_0(2)>1.01$, $χ_0(n)=\sqrt{\f2n}$ for $n\ge 3$ solutions are bounded, \cite{newapproach,Biler99,win-ge,fujie}. 

On the other hand, in the parabolic-elliptic counterparts (with $τ=0$), 
for $χ>\f{2n}{n-2}$ and $n\ge 3$ blow-up can occur, \cite{nagaisenba}. The parabolic-parabolic ($τ=1$) systems \eqref{intro:productionsystem} with large $χ$ belong to the unknown border area previously alluded to. Forays exploring these strange lands have been undertaken in at least two directions: ``Close to'' parabolic--elliptic or elliptic--parabolic settings, that is, for very small or very large values of $τ$,  
Fujie and Senba have demonstrated that boundedness can be achieved, \cite{fujiesenba1,fujiesenba2}; on the other hand, staying with $τ=1$, weaker solution concepts have been pursued in \cite{win-ge,stinnerwinkler,lanwin} and ``weak solutions'', ``weak power-$λ$-solutions'', and ``global generalized solutions'' have been found, when $χ<\sqrt{\f{n+2}{3n-4}}$, $χ<\sqrt{\f n{n-2}}$ and the solutions are radially symmetric, or for $χ<\f{n}{n-2}$ ($n\ge 4$) and $χ<\sqrt{8}$ if $n=3$, respectively. While all of these notions of solutions are compatible with the usual meaning in the sense that if such a solution merely enjoys additional differentiability properties, it already is a classical ($C^{2,1}$-)solution, global existence of solutions in any of these weaker senses does not preclude their unboundedness on some finite time interval. 

Nevertheless, they allow us to gain 
some insight into the possibility of blow-up.  
For example, as long as \[
 χ<\begin{cases}
    ∞,& n=2\\
    \sqrt{8}, &n=3\\
    \f{n}{n-2},&n\ge 4, 
   \end{cases}
\] 
global generalized solutions to \eqref{intro:productionsystem} are obtained in \cite{lanwin} in such a way that, apparently, persistent Dirac-type singularities (those constituting \textit{the} manner of blow-up formation in the classical Keller--Segel system of chemotaxis, 
see \cite{luckhaus-sugiyama-velazquez} 
) are prevented from forming.  

In the system 
\begin{equation}\label{intro:oursystemgeneral}
\begin{cases}  u_t=Δu - χ\nabla\cdot\kl{\f uv \na v} + f(u), \\
  v_t=Δv - uv,\end{cases}
\end{equation}
which we are going to consider in this article, 
and where the cross-diffusive influence in the first equation and evolution of the signal 
interact even more delicately 
than in \eqref{intro:productionsystem}  and even destabilizingly,  
some further indications concerning which conditions lead to global solutions 
would be desirable. 
After all, despite the model (with $f\equiv0$) going back to the 1970s, where it served as macroscopial description for \textit{E. coli} bacteria forming bands, \cite{kellersegel_trav}, and some studies on travelling wave solutions \cite{kellersegel_trav,li_li_wang,nagai_ikeda,TWS-survey}, it was only recently that general existence results were found. 
In \cite{wangxiangyu_asymptotic} a smallness condition on the initial data guaranteeing global existence of bounded solutions in the domains $ℝ^2$ or $ℝ^3$ was discovered. Later it was observed (as a by-product of the analysis in \cite{win_ct_sing_abs_eventual}) that in bounded convex two-dimensional domains another, less restrictive smallness condition has a similar result. Here, moreover, without any smallness requirements, generalized solutions have been found, \cite{win_ct_sing_abs}. In the case of $\io u_0$ being sufficiently small (which is a smallness condition on a biologically interpretable quantity), these have the additional property of eventual regularization, \cite{win_ct_sing_abs_eventual}, so that at least after some unknown but finite time blow-up is impossible. While these results can be recovered if \eqref{intro:oursystemgeneral} is coupled with a fluid, \cite{YilongWang2016,black}, the extension to higher-dimensional settings is not as straightforward. 
In 3D, renormalized solutions have been found, \cite{win_ct_sing_abs_renormalized} -- if the situation is radially symmetric, which is, of course, a rather drastic restriction. 

Several possible changes to the model have been investigated with respect to the question whether they can enable us to find solutions. 
For example, significantly enhancing diffusion at high concentrations (in the form of porous medium type diffusion, that is, by replacing $Δu$ by $Δu^m$ in \eqref{intro:oursystemgeneral}) leads to global solutions and excludes finite-time blowup in bounded domains $\Om\subset ℝ^n$, if $m>1+\f n4$ \cite{locallybounded}. The same effect can be achieved by replacing $\nabla\cdot\kl{\f uv\na v}$ by, essentially, $\nabla\cdot\kl{\f{u^{α}}v\na v}$ with $α<1-\f n4$, \cite{dongmei}. In the two-dimensional setting, also using terms approximately of the form $-u^{β}v$ with $β\in(0,1)$ in place of $-uv$ ensures that the classical solutions exist globally, provided that $χ<1$, \cite{lanvig}, -- and, if $\io u_0$ is small, also entails their boundedness, \cite{lanvig}. 

Another modification of chemotaxis models that can be motivated from biological considerations, and, more importantly, whose presence in many cases serves to supply boundedness of solutions, and which has, hence, been extensively studied (see, e.g. \cite{eventualsmoothness,tian,veryweak,blowupprev,Zhao2016}) 
is that of logistic source terms, i.e. $f(u)=κu-μu^{α}$ ($κ,μ>0$, $α=2$). For $α>1+\f n2$, global classical solutions have been shown to exist in \cite{veryrecent}. As to the case of canonical logistic sources (i.e. $α=2$), in a previous work, \cite{lanlan1}, we have shown that \eqref{intro:oursystemgeneral} with this choice of $f$ has global classical solutions if $0<χ<\sqrt{\f2n}$ and if $μ$ is sufficiently large in the sense that, more precisely, $μ>\f{n-2}n$. Realistic values of $μ$ are positive but small, hence the latter condition is very satisfactory for $n=2$, but in higher dimensions leaves the most interesting cases open. Also, the condition on $χ$ (however much resemblance it bears to the condition needed in the treatment of \eqref{intro:productionsystem} or to that in \cite{lanvig}) raises the question about the remaining parameter range. Accordingly, the main 
question we will pursue in this article is:

\textit{What happens for small values of $μ>0$ (in dimensions $n\ge 3$) -- and what if the assumption $χ<\sqrt{\f2n}$ is removed? }

In line with the above discussion, we will aim for the existence of solutions in a general sense, and hope that the step from $μ=0$ to $μ>0$, in the two-dimensional setting and for small $χ$ responsible for us finding classical instead of generalized solutions, in higher dimensions or for large $χ$ helps us to advance from ``no solutions known at all (apart from a radially symmetric setting in $n=3$)'' to some degree of solvability.

More precisely, we will assume that $\Om\subset ℝ^n$, $n\ge 1$, is a bounded, smooth domain, and that the initial data 
\begin{equation}\label{cond:init}
 u_0\in C^0(\Ombar) \text{ are nonnegative and } \; v_0\in W^{1,\infty}(\Om) \text{ positive throughout $\Ombar$, respectively}.
\end{equation}

We will introduce a concept of generalized solutions (Section \ref{sec:solutionconcept}), and starting from an approximative system with global solutions (see Section \ref{sec:approxsystem}) we will, in Section \ref{sec:absch}, derive estimates allowing us to construct a generalized solution by compactness arguments, so that in Section \ref{sec:constrgensol} we will finally prove the following theorem: 

\begin{satz}\label{thm:main} Let $\Om\subset ℝ^n$, $n\ge 1$, be a bounded, smooth domain, let $u_0$ and $v_0$ satisfy \eqref{cond:init} and let $χ\ge 0$, $κ\ge 0$, $μ>0$ be arbitrary. Then the initial boundary value problem
\begin{align}\label{sys}\begin{array}{rlll}
 u_t&=Δ u -\chi\nabla\cdot\left(\frac{u}{v}\nabla v\right)+\kappa u -\mu u^2,&\text{in }\Omega\times(0,∞),\\
 v_t&=Δ v -uv,&\text{in }\Omega\times(0,∞),\\
 \partial_{\nu}u&=\partial_{\nu}v=0,&\text{in }\partial\Omega\times(0,∞),\\
 u(\cdot,0)&=u_0, \quad v(\cdot,0)=v_0,&\text{in }\Omega,&
 \end{array}
\end{align}
has a global generalized solution in the sense of Definition \ref{def:lsg} below.
\end{satz}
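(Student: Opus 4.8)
The plan is to construct the generalized solution as a limit of classical solutions $(\ue,\ve)$ to a suitably regularized problem. First I would introduce an approximation that removes the singularity $\f uv$ (for instance replacing $\f{\ue}{\ve}$ by $\f{\ue}{\ve}\cdot\f{1}{1+\eps\ue}$ or $\f{\ue}{\ve+\eps}$, and possibly regularizing the production term $\ue\ve$ as well) in such a way that the approximate systems admit global classical solutions $\ue,\ve\ge 0$, with $\ve$ bounded below on finite time intervals; this is the content of Section~\ref{sec:approxsystem}. The key structural features that survive the approximation are: mass control from the logistic term, giving $\io\ue\le C$ and, after integrating the ODE $\dt\io\ue\le\kappa\io\ue-\mu\io\ue^2$ in time and using the Cauchy--Schwarz inequality, a spacetime bound $\int_0^T\io\ue^2\le C(T)$; a pointwise upper bound $\ve\le\Norm{L^\infty(\Om)}{v_0}$ from the maximum principle applied to the $v$-equation (since $-\ue\ve\le 0$); and a lower bound for $\ve$ that deteriorates in time but is locally controlled.

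Next I would derive the estimates of Section~\ref{sec:absch} that are uniform in $\eps$. The decisive quantity, as is typical for these singular-sensitivity consumption models, is a functional involving $\na v/v$ or $\log v$: testing the $v$-equation against $-\f{\Delta v}{v}$ (equivalently working with $w=\log v$, which satisfies $w_t=\Delta w+|\na w|^2-\ue$) yields, after careful manipulation and absorption of boundary terms (here the smoothness of $\partial\Om$ and, if needed, a Gagliardo--Nirenberg argument to handle the non-convex case as in \cite{win_ct_sing_abs}), a bound of the form $\int_0^T\io\f{|\na\ve|^4}{\ve^4}+\int_0^T\io\f{|D^2\ve|^2}{\ve^2}\le C(T)$. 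Combined with the $L^2$-bound on $\ue$, this controls $\io\ue\log\ue$ (via a testing of the $u$-equation by $\log\ue$, the cross term $\int\f{\ue}{\ve}\na\ve\cdot\f{\na\ue}{\ue}=\int\na\ue\cdot\f{\na\ve}{\ve}$ being absorbed through Young's inequality against the diffusion term $\int\f{|\na\ue|^2}{\ue}$ and the gradient-of-$\log v$ term), hence gives $\io\ue\log\ue\le C(T)$ and a spacetime bound on $\f{|\na\ue|^2}{\ue}$, i.e. on $|\na\sqrt{\ue}|^2$. These bounds provide enough compactness: $\ue$ is bounded in $L^2$ and equiintegrable, $\sqrt{\ue}$ is bounded in $L^2((0,T);W^{1,2})$, $\ve$ is bounded in appropriate spaces with $\na\log\ve$ bounded in $L^4$, and the time derivatives $\partial_t\ue$, $\partial_t\ve$ are bounded in suitable negative-order Sobolev spaces via the respective equations, so Aubin--Lions yields, along a subsequence $\eps=\eps_j\to 0$, strong $L^2_{loc}$ (and a.e.) convergence of $\ue$ and of $\ve$, together with weak convergence of the various gradient expressions.

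Then, in Section~\ref{sec:constrgensol}, I would verify that the limit $(u,v)$ is a generalized solution in the sense of Definition~\ref{def:lsg}: the a.e.\ convergences let me pass to the limit in the nonlinear terms — $\mu\ue^2\to\mu u^2$ and $\ue\ve\to uv$ by strong $L^1_{loc}$ convergence (using equiintegrability from the $L\log L$ and $L^2$ bounds), and the cross-diffusive flux $\f{\ue}{\ve}\na\ve=2\sqrt{\ue}\cdot\sqrt{\ue}\,\f{\na\ve}{\ve}$ as a product of a strongly convergent factor and a weakly convergent one (or, if the solution concept is formulated via $\log v$ and a supersolution/weak formulation in the style of \cite{win_ct_sing_abs,lanwin}, by passing to the limit in the $w=\log\ve$ equation as a supersolution inequality, since convexity makes $|\na w|^2$ lower-semicontinuous under weak convergence). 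One also checks the $v$-equation is satisfied in the standard weak sense and the mass/entropy inequalities required by the definition are inherited from the $\eps$-level by lower semicontinuity. The main obstacle I anticipate is precisely obtaining the $\eps$-uniform estimate on $\int_0^T\io\f{|\na\ve|^4}{\ve^4}$ (or equivalently the $\log v$-gradient bound) without a convexity assumption on $\Om$ and while simultaneously closing the coupled estimate with $\io\ue\log\ue$ — the interplay of the boundary terms in the $\f{\Delta v}{v}$-testing with the non-sign-definite logistic contribution $\kappa\ue$ in the $\ue\log\ue$ evolution is the delicate point, and it is here that the restriction to $\chi<\sqrt{2/n}$ in our earlier classical-solution work \cite{lanlan1} had to be imposed; dropping it is bought at the price of settling for generalized rather than classical solutions.
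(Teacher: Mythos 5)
Your overall architecture (regularize to remove the singularity, get global classical approximate solutions, derive $\eps$-uniform bounds, pass to the limit via Aubin--Lions and weak/strong convergences) matches the paper, but the core of your a priori estimates is a genuinely different — and, as written, gapped — route. You hinge everything on the second-order/quartic estimate $\int_0^T\io \f{|\na\ve|^4}{\ve^4}+\int_0^T\io\f{|D^2\ve|^2}{\ve^2}\le C(T)$ obtained by testing the $v$-equation with $\f{\Delta \ve}{\ve}$, and then on the entropy-type bounds $\io \ue\log\ue\le C(T)$ and $\int_0^T\io|\na\sqrt{\ue}|^2\le C(T)$, whose derivation in turn needs that quartic bound to absorb the cross term. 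But closing the $\f{\Delta v}{v}$-testing produces a cubic term of the type $\int\Delta\log\ve\,|\na\log\ve|^2$, and controlling $\int|\na\log\ve|^4$ by $\int|D^2\log\ve|^2$ plus lower-order quantities via Gagliardo--Nirenberg is specifically a two-dimensional mechanism; this is exactly why \cite{win_ct_sing_abs} is restricted to $n=2$ and why only radial renormalized solutions are known in 3D. You acknowledge this as ``the main obstacle'' but do not resolve it, and the logistic bonus $\int_0^T\io\ue^2\le C(T)$ only helps with the linear term $\int \ue\,\Delta\log\ve$, not with the cubic one. So the decisive estimate of your proposal is not established for general $n\ge1$ and arbitrary $\chi>0$, which is precisely the regime the theorem claims.

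The paper's proof avoids this obstruction entirely, because its solution concept (a very weak subsolution inequality for $u$ together with a weak supersolution inequality for $\log(u+1)$) is designed to need only low-order information: $\int_0^T\io|\na\log\ve|^2\le C$ follows by simply integrating the equation for $w_\eps=-\log(\ve/\norm{\infty}{v_0})$ and using the $L^1$-bound on $\ue$; $\int_0^T\io|\na\log(\ue+1)|^2\le C$ follows from the evolution of $-\io\log(\ue+1)$, where the chemotaxis term is absorbed by Young's inequality against $|\na\log(\ue+1)|^2$ and $|\na\log\ve|^2$ with no restriction on $\chi$ or $n$; together with the logistic $L^2_{x,t}$-bound on $\ue$ this already gives all the compactness used. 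The only delicate limit step is the mixed term $\f{u}{u+1}\varphi\,\na\log(v)\cdot\na\log(u+1)$, a product of two merely weakly convergent gradients with no sign; the paper handles it not through higher regularity but by upgrading $\na\log(\vj)\rightharpoonup\na\log(v)$ to strong $L^2$-convergence via convergence of norms (testing the limiting $v$-identity with $\f1{v+\eta}$ and the approximate equation with $\f1{\ve}$ and comparing). Your sketch of the limit passage (strong-times-weak for $\sqrt{\ue}\,\f{\na\ve}{\ve}$, lower semicontinuity of $|\na w|^2$) would also need an argument of this kind for whatever formulation you adopt, but the primary gap remains the unproven quartic estimate on which your whole chain of bounds rests.
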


The solution concept we want to consider is based on the notion of solution pursued in  \cite{win_ct_sing_abs}, which in turn is a relative of the concept of renormalized solutions, \cite{boltzmann}. Unlike the system in \cite{win_ct_sing_abs} or other systems without logistic source (as, for example, those in \cite{siam,win_2dCTStokesrot,lanwin}), however, \eqref{sys} does not conserve mass -- a property, on which the solution concepts of the mentioned chemotaxis articles rely heavily. For the definition of subsolutions, we will hence adapt the definition from \cite{veryweak}.

\section{The solution concept}\label{sec:solutionconcept}

We will require the first component $u$ to satisfy two integral inequalities instead of the one integal identity commonly used for the definition of weak solutions. We formalize the first part of the solution concept in the following definition of subsolutions.

\begin{definition}[very weak subsolution]\label{def:sul}
 A pair $(u,v)$ of functions is called \textit{very weak subsolution} to the system \eqref{sys} iff $u$ is nonnegative and $v$ is positive almost everywhere, ${u\in L_{\text{loc}}^2([0,\infty);L^2(\Omega))}$, ${v\in L_{\text{loc}}^2([0,\infty);W^{1,2}(\Omega))}$ and  ${\nabla \log(v)\in L_{\text{loc}}^2(\Omega\times[0,\infty))}$ hold and, moreover, 
  \begin{align}\label{eq:ssul}
  -\int_0^{\infty}\!\!\io \varphi_tu-\io u_0\varphi(\cdot,0)\leq& \int_0^{\infty}\!\!\io uΔ\varphi+\chi\int_0^{\infty}\!\!\io u\nabla\varphi\cdot\nabla\log(v)
  +\kappa\int_0^{\infty}\!\!\io u\varphi-\mu\int_0^{\infty}\!\!\io \varphi u^2
  \end{align} is satisfied for every nonnegative $\varphi\in C_0^{\infty}(\overline{\Omega}\times [0,\infty))$ with $\partial_\nu \varphi=0$ on  $\partial\Omega\times(0,\infty)$ and 
  \begin{align}\label{second:weaksol}
  -\int_0^{\infty}\!\!\io \psi_tv-\io v_0\psi(\cdot,0)=-&\int_0^{\infty}\!\!\io \nabla v \cdot \nabla \psi-\int_0^{\infty}\!\!\io \psi uv
 \end{align}
 is fulfilled for every  $\psi\in C_0^{\infty}(\overline{\Omega}\times [0,\infty))$.
\end{definition}

In addition to this subsolution property, an inequality with the opposite sign will be required for a sensible solution concept. 
\begin{definition}[weak logarithmic supersolution]\label{def:slol}
 A pair of functions $(u,v)$ is called \textit{weak logarithmic supersolution} of (\ref{sys}) iff $u$ is nonnegative and $v$ is positive almost everywhere, $u\in L_{\text{loc}}^1([0,\infty);L^2(\Omega))$, $v\in L_{\text{loc}}^{\infty}(\Om\times[0,\infty))\cap L_{\text{loc}}^2([0,\infty);W^{1,2}(\Omega))$, $\nabla \log(u+1)\in L_{\text{loc}}^2(\Omega\times[0,\infty))$, $\nabla \log(v)\in L_{\text{loc}}^2(\Omega\times[0,\infty))$ and 
 \begin{align}\label{eq:slol}
  -\int_0^{\infty}\!\!\io& \log(u+1)\varphi_t-\io \log(u_0+1)\varphi(\cdot,0) \nn\\
  \geq& -\int_0^{\infty}\!\!\io\nabla\log(u+1)\cdot\nabla\varphi 
  +\int_0^{\infty}\!\!\io\varphi|\nabla\log(u+1)|^2+\chi\int_0^{\infty}\!\!\io\frac{u}{u+1}\nabla \log(v) \cdot \nabla\varphi\nn\\
  &-\chi \int_0^{\infty}\!\!\io \frac{u}{u+1}\varphi \nabla\log(v)\cdot\nabla \log(u+1) 
  + \kappa\int_0^{\infty}\!\!\io\frac{u}{u+1}\varphi-\mu\int_0^{\infty}\!\!\io\frac{u^2}{u+1}\varphi
 \end{align}
is satisfied for every nonnegative $\varphi\in C_0^{\infty}(\overline{\Omega}\times [0,\infty))$ and \eqref{second:weaksol} 
holds for every $\psi\in C_0^{\infty}(\overline{\Omega}\times [0,\infty))$. 
\end{definition}
\begin{bemerkung}
 Because $0\leq \log(u+1)\leq u$ and $\frac{u}{u+1}\leq 1$, all integrals in Definition \ref{def:slol} are well-defined.
\end{bemerkung}

With these two concepts we can now define a generalized solution: 
\begin{definition}[Generalized solution]\label{def:lsg}
 A pair $(u,v)$ of functions is called \textit{generalized solution} to (\ref{sys}), iff $(u,v)$ is a very weak subsolution and a weak logarithmic supersolution to \eqref{sys}.
\end{definition}

This concept of ``generalized solutions'' is compatible with the concept of classical solutions in the following sense:

\begin{satz}
Every pair of functions $(u,v)$ satisfying 
 \begin{align}\label{classsol}
  u&\in C^0(\overline{\Omega}\times[0,∞))\cap C^{2,1}(\overline{\Omega}\times(0,∞))\quad\text{and }\nn\\
 v&\in C^0(\overline{\Omega}\times[0,∞))\cap C^{2,1}(\overline{\Omega}\times(0,∞))\cap L_{\text{loc}}^{\infty}([0,∞);W^{1,2}(\Omega)),
 \end{align}
 and solving \eqref{sys} in the classical sense (hereafter ``classical solution'') is also a generalized solution.
 Also, if $(u,v)$ is a generalized solution to \eqref{sys} which additionally satisfies \eqref{classsol},  then $(u,v)$ is a classical solution of \eqref{sys}.
\end{satz}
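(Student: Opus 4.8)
The plan is to prove the two implications separately; in each the strategy is to translate between the integral (in)equalities of Definitions \ref{def:sul}--\ref{def:lsg} and pointwise identities via integration by parts, the $C^{2,1}$-regularity in \eqref{classsol} making every manipulation legitimate.

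\emph{A classical solution is a generalized solution.} One first checks that the integrability requirements of Definitions \ref{def:sul} and \ref{def:slol} are met: $u\in L^2_{\mathrm{loc}}$ and $v\in L^\infty_{\mathrm{loc}}\cap L^2_{\mathrm{loc}}([0,\infty);W^{1,2}(\Omega))$ are immediate from \eqref{classsol}; positivity of $v_0$ together with a comparison argument for the second equation of \eqref{sys} yields a positive lower bound for $v$ on $\overline{\Omega}\times[0,T]$ and hence $\nabla\log v=\frac{\nabla v}{v}\in L^2_{\mathrm{loc}}$; and an $L^2$-energy estimate for the first equation (using boundedness of $u$ and $\nabla\log v\in L^2_{\mathrm{loc}}$) gives $\nabla u\in L^2_{\mathrm{loc}}(\Omega\times[0,\infty))$, so $\nabla\log(u+1)\in L^2_{\mathrm{loc}}$. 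With this in hand, multiplying the first equation of \eqref{sys} by a nonnegative test function $\varphi$ and integrating by parts once in time and twice in space --- all boundary terms vanishing because $\partial_\nu u=\partial_\nu v=\partial_\nu\varphi=0$ --- produces \eqref{eq:ssul} with equality, while \eqref{second:weaksol} is just the standard weak form of the second equation. For \eqref{eq:slol} one invokes the chain-rule identities $\partial_t\log(u+1)=\frac{u_t}{u+1}$, $\frac{\Delta u}{u+1}=\Delta\log(u+1)+|\nabla\log(u+1)|^2$ and $\frac1{u+1}\nabla\cdot(\frac{u}{v}\nabla v)=\nabla\cdot(\frac{u}{u+1}\nabla\log v)+\frac{u}{u+1}\nabla\log(u+1)\cdot\nabla\log v$, tests the resulting equation for $\log(u+1)$ against $\varphi$, and integrates by parts once; again equality results, in particular the required ``$\ge$''.

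\emph{A generalized solution satisfying \eqref{classsol} is a classical solution.} Here one runs these integrations by parts in reverse. Restricting the test functions in \eqref{eq:ssul}, \eqref{eq:slol} and \eqref{second:weaksol} to those supported in $\Omega\times(0,\infty)$ kills all boundary and initial contributions, and smoothness of $u,v$ (hence of $\log v$ and $\log(u+1)$) there makes the integrations by parts valid; by the fundamental lemma of the calculus of variations, \eqref{second:weaksol} becomes $v_t=\Delta v-uv$, \eqref{eq:ssul} becomes $u_t\le\Delta u-\chi\nabla\cdot(\frac{u}{v}\nabla v)+\kappa u-\mu u^2$, and \eqref{eq:slol} --- after undoing the same chain-rule identities, so that the two $\chi$-terms containing $\nabla\log(u+1)\cdot\nabla\log v$ cancel --- becomes $\frac1{u+1}\bigl(u_t-\Delta u+\chi\nabla\cdot(\frac{u}{v}\nabla v)-\kappa u+\mu u^2\bigr)\ge0$, i.e. the reverse inequality since $\frac1{u+1}>0$; combining the two one-sided bounds gives the first equation of \eqref{sys} pointwise in $\Omega\times(0,\infty)$. (That $v>0$ there, needed to make sense of $\log v$, follows from the strong maximum principle applied to the $v$-equation together with $v\ge0$ and $v>0$ a.e.) Next, with test functions allowed to touch $\partial\Omega$ but still vanishing for small $t$, the surviving boundary term in \eqref{second:weaksol} forces $\partial_\nu v=0$ on $\partial\Omega\times(0,\infty)$ (whence $v>0$ up to $\partial\Omega$ by Hopf's lemma), and, using the PDE and $\partial_\nu\varphi=0$, the boundary terms in \eqref{eq:ssul} and \eqref{eq:slol} give $\int_{\partial\Omega}\varphi\,\partial_\nu u\le0$ and $\int_{\partial\Omega}\varphi\,\frac{\partial_\nu u}{u+1}\ge0$ respectively, so $\partial_\nu u=0$ on $\partial\Omega\times(0,\infty)$. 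Finally, admitting $\varphi(\cdot,0)\not\equiv0$ and comparing the $t=0$ boundary terms (legitimate once the PDEs and boundary conditions hold, and with $u,v$ continuous up to $t=0$), \eqref{second:weaksol} reduces to $v(\cdot,0)=v_0$ while \eqref{eq:ssul} and \eqref{eq:slol} give $u(\cdot,0)\le u_0$ and $\log(u(\cdot,0)+1)\ge\log(u_0+1)$ a.e., hence $u(\cdot,0)=u_0$.

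\emph{Main obstacle.} The delicate point is entirely in the second implication: the solution concept provides for $u$ only two \emph{one-sided} inequalities --- a weak ``$\le$''-inequality for $u$ itself and a weak ``$\ge$''-inequality for $\log(u+1)$ --- and the $u$-equation can be recovered only because the nonlinear terms of \eqref{eq:ssul} and \eqref{eq:slol} turn out to be exactly complementary after the logarithmic chain-rule substitution, the cancellation of the two $\chi$-terms and the uniform positivity of $\frac1{u+1}$ being what makes this work. Carefully verifying this algebraic compatibility (and the Hopf-lemma step upgrading $v>0$ a.e. to $v>0$ on $\overline{\Omega}\times(0,\infty)$) is where the work sits; the boundary-term bookkeeping and the $C^0$-treatment of the initial data are then routine.
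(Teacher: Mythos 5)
Your proposal is correct and follows essentially the same route as the paper: testing the classical equations with $\varphi$ and $\varphi/(u+1)$ for the first implication, and for the converse localizing the test functions (interior, spatial boundary, $t=0$) to recover the two one-sided differential inequalities, whose combination—via the same chain-rule cancellation and positivity of $\frac1{u+1}$—yields the PDE, boundary and initial conditions. The only minor deviation is that you justify $\nabla\log(u+1)\in L^2_{\mathrm{loc}}$ by an $L^2$-energy estimate for $\nabla u$, whereas the paper reuses the logarithmic computation of Lemma \ref{lm4} with $\eps=0$; both are fine.
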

\begin{proof}
That every classical solution is a very weak subsolution and a weak logarithmic supersolution follows from testing the PDE by test functions $φ$ or by $φ\cdot\f1{1+u}$, respectively, as soon as the required integrability properties are assured. Concerning the least obvious of these, we note that $v$ is positive by the maximum principle and hence $\nabla \log(v)\in L^2_{loc}(\Ombar\times[0,∞))$ is immediate, and that $\nabla \log(u+1)\in L^2_{loc}(\Ombar\times[0,∞))$ can be obtained from considerations as in Lemma \ref{lm4} below.
Indeed, assuming a sufficient degree of differentiability, like present, the computation in \eqref{auchfuerklassische} can be performed for $ε=0$, too. \\
 We now let $(u,v)$ be a generalized solution to (\ref{sys}) with \eqref{classsol}. Standard arguments relying on the assumed regularity  show that the weak solution property of \eqref{second:weaksol} implies that the second equation of \eqref{sys}, along with its initial and boundary conditions, is also solved classically by $(u,v)$. 
 Since $(u,v)$ is a very weak subsolution, for every nonnegative ${\varphi\in C_0^{\infty}(\overline{\Omega}\times [0,\infty))}$ with $\partial_\nu \varphi=0$ on  $\partial\Omega$ the inequality \eqref{eq:ssul} holds true. Due to \eqref{classsol}, we may integrate by parts and, due to $\nabla\varphi\cdot \nu=0$ and $\nabla \log(v)\cdot \nu=0$ on  $\partial\Omega$, we have 
 \begin{align*}
  \int_0^{\infty}\!\!\io u_t\varphi &+\io u(\cdot,0)\varphi(\cdot,0)-\io u_0\varphi(\cdot,0)\\
  \leq& \int_0^{\infty}\!\!\io Δ u \varphi - \int_0^{\infty} \!\!\int_{\partial\Omega} \nabla u \cdot \nu \varphi-\chi\int_0^{\infty}\!\!\io \nabla \cdot (u\nabla \log(v))\varphi +\kappa\int_0^{\infty}\!\!\io u\varphi-\mu\int_0^{\infty}\!\!\io \varphi u^2.
 \end{align*}
 Here inserting arbitrary smooth nonnegative functions as above, supported in either the interior or close to the spatial or temporal boundary of $\Om\times(0,∞)$, (for a more detailed account of this reasoning see, e.g., the proof of \cite[Lemma 2.5]{lanwin}) we can see that 
 \begin{align}\label{kleinergleich}
  u_t&\leq Δ u -\chi\nabla\cdot(u\nabla\log(v))+\kappa u-\mu u^2&&\text{in }\Omega\times(0,\infty),\nn\\
  \partial_{\nu}u&\leq 0&&\text{on  }\partial\Omega\times(0,∞),\\
  u(\cdot,0)&\leq u_0 &&\text{in }\Omega,\nn 
 \end{align}
 respectively. 
Moreover, $(u,v)$ is a weak logarithmic supersolution, hence for every nonnegative test function $\varphi\in C_0^{\infty}(\overline{\Omega}\times [0,\infty))$ the inequality \eqref{eq:slol} holds.
Integration by parts leads to 
 \begin{align*}
  &\hspace{-0.5cm}\int_0^{\infty}\!\!\!\io \frac{u_t}{u+1}\varphi+\io \log(u(\cdot,0)+1)\varphi(\cdot,0)-\io \log(u_0+1)\varphi(\cdot,0)\\
  \geq &\int_0^{\infty}\!\!\!\!\io \frac{Δ u}{u+1} \varphi -\int_0^{\infty}\!\!\!\!\int_{\partial\Omega} \frac{\varphi}{u+1}\nabla u\cdot \nu -\chi\int_0^{\infty}\!\!\!\!\io\tel{u+1}\nabla\cdot(u\nabla\log(v))\varphi 
  + \kappa\int_0^{\infty}\!\!\!\!\io\frac{u}{u+1}\varphi-\mu\int_0^{\infty}\!\!\!\!\io\frac{u^2}{u+1}\varphi
 \end{align*}
if we use the facts that  $\nabla \log(v)\cdot \nu=0$ on  $\partial\Omega$, and that 
 \begin{align*}
 Δ\log(u+1)&=\frac{Δ u}{u+1}-|\nabla \log(u+1)|^2 \quad\text{ and }\\
 \nabla\cdot \left(\frac{u}{u+1}\nabla \log(v)\right)&=\tel{u+1}\nabla\cdot(u\nabla\log(v))-\frac{u}{u+1}\nabla\log(v)\cdot\nabla\log(u+1).
 \end{align*}
 We can conclude 
 \begin{align*}
  \frac{u_t}{u+1}\geq\frac{Δ u -\chi\nabla\cdot(u\nabla\log(v))+\kappa u-\mu u^2}{u+1},
 \end{align*}
and thus due to nonnegativity of $u$ 
 \begin{align}\label{groessergleich}
  u_t\geq Δ u -\chi\nabla\cdot(u\nabla\log(v))+\kappa u-\mu u^2\qquad \text{in } \Om\times(0,∞).
 \end{align}
Furthermore, as above, we can see that 
  \begin{equation}\label{groessergleichrand}\partial_{\nu}u\geq 0 \text{ on }\partial\Omega\times(0,∞)\end{equation}
 and 
  \(\log(u(\cdot,0)+1)\geq \log(u_0+1)\) in $\Om$,
which due to the monotonicity of $s\mapsto\log(s+1)$ entails
 \begin{align}\label{groessergleichanfang}
  u(\cdot,0)\geq u_0\quad\text{in }\Omega.
 \end{align}
In conclusion, in \eqref{kleinergleich} and \eqref{groessergleich}, \eqref{groessergleichrand}, \eqref{groessergleichanfang}, we have shown that $(u,v)$ satisfies \eqref{sys} classically. 
\end{proof}


\section{An approximating system}\label{sec:approxsystem}
In the following, we will construct a generalized solution as limit of classical solutions to approximating systems. First we will prove global classical solvability of these.

For $\eps>0$ let us consider the system 
\begin{align}\label{syseps}\begin{array}{rlll}
 {\ueps}_t&=Δ {\ueps} -\chi\nabla\cdot\left(\frac{{\ueps}}{(1+\eps\ueps){\veps}}\nabla {\veps}\right)+\kappa {\ueps} -\mu {\ueps}^2,\\
 {\veps}_t&=Δ {\veps} -\frac{\ueps\veps}{(1+\eps\ueps)(1+\eps\veps)},\\
 \partial_{\nu}{\ueps}&=\partial_{\nu}{\veps}=0,\\
 {\ueps}(\cdot,0)&=u_0, ~{\veps}(\cdot,0)=v_0,\\
 \end{array}
\end{align}
with $u_0$, $v_0$ as before. 

Our first goal is to prove the global classical solvability of \eqref{syseps}: 
\begin{lemma}\label{epsglobal}
 Under the assumptions of Theorem \ref{thm:main}, 
 for every $\eps>0$, system (\ref{syseps}) has a global solution. 
\end{lemma}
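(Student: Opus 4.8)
The plan is to obtain a maximal local solution by standard parabolic fixed-point theory, to derive a few elementary, time-global a priori bounds, and then to exploit the fact that the $\eps$-regularization renders all nonlinearities in \eqref{syseps} harmless, in order to rule out finite-time blow-up via the usual extensibility criterion.

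I would begin by noting that for fixed $\eps>0$ the chemotactic coefficient $\f{\ueps}{(1+\eps\ueps)\veps}$ and the signal reaction rate $\f{\ueps}{(1+\eps\ueps)(1+\eps\veps)}$ are bounded as long as $\veps$ stays away from $0$, so \eqref{syseps} falls within the scope of the fixed-point machinery for quasilinear parabolic systems (for instance a contraction argument in $C^0([0,T];C^0(\Ombar))\times C^0([0,T];W^{1,q}(\Om))$ for some $q>n$). This provides a maximal existence time $\Tmax=\Tmax(\eps)\in(0,\infty]$ and a classical solution $(\ueps,\veps)$ on $\Ombar\times(0,\Tmax)$, together with the criterion that $\Tmax<\infty$ forces $\Norm{L^\infty(\Om)}{\ueps(\cdot,t)}+\Norm{W^{1,q}(\Om)}{\veps(\cdot,t)}\to\infty$ as $t\nearrow\Tmax$. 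Three comparison/maximum-principle arguments then supply the bounds that drive everything else: $\ueps\ge0$, since the drift and the reaction both vanish where $\ueps=0$; $\veps\le\Norm{L^\infty(\Om)}{v_0}$, since the reaction term of the second equation is nonnegative, whence $\veps_t\le\Delta\veps$; and $\veps(\cdot,t)\ge\kl{\min_{\Ombar}v_0}\,\euler^{-t/\eps}>0$, upon writing the second equation as $\veps_t=\Delta\veps-c\,\veps$ with $0\le c\le\tel\eps$. Finally, integrating the first equation over $\Om$ and using $\io\ueps^2\ge\tel{|\Om|}\kl{\io\ueps}^2$ gives $\dt\io\ueps\le\kappa\io\ueps-\f\mu{|\Om|}\kl{\io\ueps}^2$, so that $\io\ueps(\cdot,t)$ stays bounded on $(0,\Tmax)$ by a constant depending only on the data.

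Now assume $\Tmax<\infty$ and fix $T\in(0,\Tmax)$. Because the source $g_\eps:=-\f{\ueps\veps}{(1+\eps\ueps)(1+\eps\veps)}$ of the second equation is bounded in $L^\infty(\Om\times(0,T))$ by $\tel\eps\Norm{L^\infty(\Om)}{v_0}$, the Duhamel representation for $\veps$ combined with the standard smoothing estimate $\Norm{L^\infty(\Om)}{\na\,\euler^{\tau\Delta}w}\le C\tau^{-1/2}\Norm{L^\infty(\Om)}{w}$ for the Neumann heat semigroup yields $\Norm{L^\infty(\Om)}{\na\veps(\cdot,t)}\le C\Norm{W^{1,\infty}(\Om)}{v_0}+C(\eps)\sqrt t$, so $\na\veps$ is bounded on $\Om\times(0,T)$; together with the lower bound $\veps\ge\kl{\min_{\Ombar}v_0}\,\euler^{-T/\eps}$ this makes the drift $b_\eps:=\f{\chi}{1+\eps\ueps}\na\log\veps$ in the rewritten first equation $\ueps_t=\Delta\ueps-\na\cdot(\ueps b_\eps)+\kappa\ueps-\mu\ueps^2$ bounded on $\Om\times(0,T)$, with a bound depending on $\eps$ and $T$. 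Testing this equation against $\ueps^{p-1}$, absorbing the drift contribution by Young's inequality partly into the Dirichlet integral and partly into the superlinear dissipation $-\mu p\io\ueps^{p+1}$, one arrives at differential inequalities of the form $\dt\io\ueps^p\le -c_1\io\ueps^{p+1}+c_2$ and, after a Moser-type ($L^p$--$L^\infty$) iteration, at $\Norm{L^\infty(\Om)}{\ueps(\cdot,t)}\le C(\eps,T)$ on $(0,T)$. Re-inserting this into the Duhamel formula upgrades the bound on $\na\veps$ to one in $L^q(\Om)$, and thus $\Norm{L^\infty(\Om)}{\ueps(\cdot,t)}+\Norm{W^{1,q}(\Om)}{\veps(\cdot,t)}$ remains bounded as $t\nearrow\Tmax$, contradicting the extensibility criterion. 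Hence $\Tmax=\infty$.

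I expect the only step carrying genuine (if mild) technical weight to be the $L^\infty$ bound for $\ueps$, where one must actually use the cooperation of the bounded drift $b_\eps$ --- bounded precisely because of the factor $\f1{1+\eps\ueps}$ --- with the logistic absorption $-\mu\ueps^2$; an equally viable alternative here is a semigroup bootstrap started from the $L^1$ bound. All remaining ingredients --- local existence, the extensibility criterion, nonnegativity of $\ueps$, and the $L^\infty$ and gradient bounds for $\veps$ --- are routine for a system whose nonlinearities have been made locally Lipschitz in the unknowns by the regularization, and none of them requires any smallness of $\chi$ or $\mu$.
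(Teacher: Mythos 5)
Your proposal is correct, and its skeleton coincides with the paper's (Lemmata \ref{epslok}--\ref{lm:ueps}): local existence with the same extensibility criterion, nonnegativity of $\ueps$, the bounds $\veps\le\norm{\infty}{v_0}$ and $\veps\ge(\inf v_0)\euler^{-t/\eps}$ by comparison, an $\eps$-dependent Duhamel bound for $\nabla\veps$ exploiting that the regularized absorption term is bounded, an $\eps$- and $T$-dependent $L^\infty$ bound for $\ueps$, and the resulting contradiction with the blow-up criterion. The genuine difference lies in how that $L^\infty$ bound for $\ueps$ is obtained. The paper never tests with powers of $\ueps$: it replaces the logistic term by its maximum $\kappa^2/(4\mu)$, compares $\ueps$ with the solution $\hat u_\eps$ of the resulting problem, and closes by an $L^\infty$--$L^{n+1}$ semigroup estimate in which the factor $\frac{\hat u_\eps}{1+\eps\hat u_\eps}\le\frac1\eps$ makes any a priori $L^p$ information on $\ueps$ unnecessary -- only the $L^{2n+2}$ bound on $\nabla\veps$ from Lemma \ref{vepsgrad} and the pointwise lower bound on $\veps$ enter, so gradient bounds in finite $L^q$ suffice. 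You instead push the Duhamel estimate to $L^\infty$ for $\nabla\veps$ (legitimate, since the source is bounded by $\norm{\infty}{v_0}/\eps$; note only that the nonsingular part of the smoothing estimate also produces a term linear in $t$, harmless on $(0,T)$), observe that the drift $\frac{\chi}{1+\eps\ueps}\nabla\log\veps$ is then bounded, and run $L^p$ testing with the logistic absorption followed by a Moser/Alikakos iteration or a semigroup bootstrap. Both routes are sound for fixed $\eps$ and $T$; the paper's comparison trick is shorter because it avoids the iteration (and its $p$-dependent constants) altogether, whereas your argument leans more heavily on the logistic dissipation -- which the paper reserves for the $\eps$-independent estimates of Section \ref{sec:absch} -- and on the full strength of the drift bound, at the cost of the additional, if standard, iteration work.
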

For the proof we proceed in several steps, the first of which is the local existence of solutions. 
\begin{lemma}\label{epslok}
 Let $\Om\subset ℝ^n$, $n\ge 1$, be a bounded domain with smooth boundary, $\eps>0$ and $q>n$. Then for all nonnegative functions $u_0\in C^0(\overline{\Omega})$ and positive functions $v_0\in W^{1,\infty}(\Omega)$ there are $T_{\max, \eps}\in (0,\infty]$ and a unique pair of functions $(\ueps,\veps)$ satisfying
 \begin{align*}
 \ueps\in C^0(\overline{\Omega}\times[0,T_{\max, \eps}))&\cap C^{2,1}(\overline{\Omega}\times(0,T_{\max, \eps}))\quad\text{and}\\
 \veps\in C^0(\overline{\Omega}\times[0,T_{\max, \eps}))&\cap C^{2,1}(\overline{\Omega}\times(0,T_{\max, \eps}))\cap L_{\text{loc}}^{\infty}([0,T_{\max, \eps});W^{1,q}(\Omega)),
 \end{align*}
 which solves (\ref{syseps}) in the classical sense on  $\Omega\times[0,T_{\max, \eps})$, and for which  \[T_{\max, \eps}=\infty\quad\text{ or }\quad \norm{\infty}{\ueps(\cdot,t)}+\wnorm{\veps(\cdot,t)}\rightarrow \infty \text{ as } t\nearrow T_{\max, \eps}.\]
 The pair $(\ueps,\veps)$ moreover satisfies $\ueps\geq0$, $\veps(\cdot,t)\geq (\inf v_0) \euler^{-\frac{t}{\eps}}>0$ for every $t\in[0,T_{\max, \eps})$.
\end{lemma}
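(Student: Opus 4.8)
The plan is to combine a standard contraction‑mapping argument with parabolic regularity theory; the only nonstandard feature is the singular factor $\tfrac1{\veps}$, which is tamed both by the $\eps$‑regularisation and by the strict positivity of $v_0$. I would fix $q>n$, choose $T>0$ (small, to be fixed later) and a small radius $\delta>0$, and run a Banach fixed point argument for the map $\Phi=(\Phi_1,\Phi_2)$ given by the Duhamel formulas for the two equations of \eqref{syseps} (with the Neumann heat semigroup $(\euler^{t\Delta})_{t\ge0}$, the nonlinearities evaluated at the argument), on the closed ball of radius $\delta$ around $(\euler^{\cdot\Delta}u_0,\euler^{\cdot\Delta}v_0)$ in $C^0(\Ombar\times[0,T])\times C^0([0,T];W^{1,q}(\Om))$. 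Since $q>n$ we have $W^{1,q}(\Om)\hookrightarrow L^\infty(\Om)$, so for $\delta$ small the second component stays within $\tfrac12\inf_\Om v_0$ of $\euler^{\cdot\Delta}v_0$ in sup norm; as $\euler^{\cdot\Delta}v_0\ge\inf_\Om v_0>0$ (here using $v_0>0$ on the compact set $\Ombar$), this forces $v\ge\tfrac12\inf_\Om v_0>0$ throughout the ball, making $\tfrac1{\veps}$, $\tfrac{\ueps}{(1+\eps\ueps)\veps}\na\veps$ and $\tfrac{\ueps\veps}{(1+\eps\ueps)(1+\eps\veps)}$ well‑defined and locally Lipschitz in the argument; moreover $\tfrac{s}{1+\eps s}\le\tfrac1\eps$ with $s\mapsto\tfrac{s}{1+\eps s}$ globally Lipschitz on $[0,\infty)$, and the logistic term is locally Lipschitz as well. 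Using the smoothing estimates $\|\euler^{t\Delta}w\|_{W^{1,q}}\le C(1+t^{-1/2})\|w\|_{L^q}$, $\|\na\euler^{t\Delta}w\|_{L^q}\le C(1+t^{-1/2})\|w\|_{L^q}$ and $\|\na\!\cdot\!\euler^{t\Delta}w\|_{L^\infty}\le C(1+t^{-1/2-n/(2q)})\|w\|_{L^q}$ for $\Om$‑vector fields $w$ (this last one being where $q>n$ enters, so that the exponent is $<1$ and time‑integrable), together with $\na v_0\in L^\infty(\Om)\subset L^q(\Om)$ so that $v_0\in W^{1,q}(\Om)$, I would check in the usual way that for $T$ small enough $\Phi$ maps the ball into itself and is a contraction there. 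Its fixed point $(\ueps,\veps)$ is a mild solution of \eqref{syseps} on $[0,T]$, and uniqueness in this class follows from the contraction estimate (or a short Gronwall argument).

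Next I would upgrade this mild solution to a classical one by parabolic bootstrapping: $L^p$‑estimates for the two equations and Sobolev embedding yield Hölder continuity of $\ueps$, $\veps$ and $\na\veps$, hence of all coefficients, after which Schauder theory gives $\ueps,\veps\in C^{2,1}(\Ombar\times(0,T])$; the regularity $\veps\in L^\infty_{\mathrm{loc}}([0,T];W^{1,q}(\Om))$ is built into the fixed point space. Reapplying the local construction from later initial times produces a maximal existence interval $[0,T_{\max,\eps})$, and the stated dichotomy follows in the standard manner: if $T_{\max,\eps}<\infty$ but $\|\ueps(\cdot,t)\|_{L^\infty(\Om)}+\|\veps(\cdot,t)\|_{W^{1,q}(\Om)}$ stayed bounded as $t\nearrow T_{\max,\eps}$, then (because, as above, the length of the local existence interval can be bounded below in terms of these norms and $\inf v_0\,\euler^{-t/\eps}$, see below) the solution could be continued past $T_{\max,\eps}$, a contradiction.

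For the pointwise bounds I would argue as follows. Nonnegativity of $\ueps$ is \emph{propagated} by the equation: rewriting the chemotactic term via
\[
 \na\!\cdot\!\kl{\f{\ueps}{(1+\eps\ueps)\veps}\na\veps}=\f{\na\ueps\cdot\na\veps}{(1+\eps\ueps)^2\veps}+\f{\ueps}{1+\eps\ueps}\kl{\f{\Delta\veps}{\veps}-\f{|\na\veps|^2}{\veps^2}},
\]
the first equation of \eqref{syseps} takes the form
\[
 {\ueps}_t-\Delta\ueps+\f{\chi\na\veps}{(1+\eps\ueps)^2\veps}\cdot\na\ueps-c(x,t)\,\ueps=0,\qquad c:=\kappa-\mu\ueps-\f{\chi}{1+\eps\ueps}\kl{\f{\Delta\veps}{\veps}-\f{|\na\veps|^2}{\veps^2}},
\]
whose drift and zeroth‑order coefficient are bounded on $\Ombar\times[t_0,T]$ for every $t_0\in(0,T)$; since $u_0\ge0$ and $\partial_\nu\ueps=0$, the parabolic maximum principle gives $\ueps\ge0$ on $\Ombar\times[t_0,T]$, and letting $t_0\downarrow0$ yields $\ueps\ge0$. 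For the explicit lower bound on $\veps$, let $T^{\ast}\in(0,T_{\max,\eps}]$ be maximal with $\veps>0$ on $\Ombar\times[0,T^{\ast})$ (positive, since $v_0>0$). On $[0,T^{\ast})$ one has $0\le\tfrac{\ueps}{(1+\eps\ueps)(1+\eps\veps)}\le\tfrac1\eps$, hence ${\veps}_t\ge\Delta\veps-\tfrac1\eps\veps$, and comparison with the spatially constant subsolution $t\mapsto(\inf_\Om v_0)\euler^{-t/\eps}$ of the Neumann problem yields $\veps(\cdot,t)\ge(\inf_\Om v_0)\euler^{-t/\eps}$ on $[0,T^{\ast})$; in particular $\veps$ stays bounded away from $0$ at $t=T^{\ast}$, which forces $T^{\ast}=T_{\max,\eps}$, so the bound holds on all of $[0,T_{\max,\eps})$.

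I expect the argument to be essentially routine, and the anticipated main obstacle is really only a matter of care rather than depth: reconciling the regularity the chemotactic term demands (it involves $\na\veps$) with the mere $W^{1,\infty}$‑regularity of $v_0$, which is handled by running the fixed point in $W^{1,q}(\Om)$ for $t>0$ and absorbing the $t^{-1/2}$‑type (and $t^{-1/2-n/(2q)}$‑type) singularities of the semigroup estimates, all of which are integrable precisely because $q>n$. A secondary subtlety is that the maximum‑principle steps for $\ueps\ge0$ and for the lower bound on $\veps$ have to be performed on $\Ombar\times[t_0,T]$ with $t_0>0$ and then passed to the limit $t_0\downarrow0$, since $C^{2,1}$‑regularity is available only for positive times.
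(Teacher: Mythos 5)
Your overall route -- a Duhamel/Banach fixed point in $C^0(\Ombar\times[0,T])\times C^0([0,T];W^{1,q}(\Om))$ with the singular factor tamed by $W^{1,q}\hookrightarrow L^\infty$ and positivity of $\euler^{t\Delta}v_0$, followed by parabolic bootstrap, a standard extensibility argument, and comparison for the pointwise bounds -- is essentially the machinery the paper delegates to its references ([BBTW, Lemma 3.1] for local existence and the dichotomy, [lanlan1, Theorem B.1] for the comparisons), so making it explicit is a legitimate, more self-contained substitute. The fixed-point, regularity and extensibility parts are sound, and your derivation of $\veps(\cdot,t)\ge(\inf v_0)\euler^{-t/\eps}$ via the constant-in-space subsolution is precisely the paper's argument.

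The genuine gap is the nonnegativity of $\ueps$. On $\Ombar\times[t_0,T]$ the maximum principle controls $\ueps$ in terms of its values at time $t_0$, i.e.\ of $\ueps(\cdot,t_0)$, not of $u_0$ -- and the sign of $\ueps(\cdot,t_0)$ is exactly what is to be proved, so ``the parabolic maximum principle gives $\ueps\ge0$ on $\Ombar\times[t_0,T]$, and letting $t_0\downarrow0$'' is circular as stated. A quantitative repair (e.g.\ $\inf_\Om \ueps(\cdot,t)\ge-\|(\ueps(\cdot,t_0))_-\|_{L^\infty(\Om)}\exp\kl{\int_{t_0}^t\sup_\Om|c(\cdot,s)|ds}$, then $t_0\searrow0$ using continuity at $t=0$) would work only if $\sup_\Om|c(\cdot,s)|$ were integrable near $s=0$; but your zeroth-order coefficient contains $\Delta\veps/\veps$, and since $v_0$ is merely $W^{1,\infty}$, nothing in your construction bounds $\|\Delta\veps(\cdot,s)\|_{L^\infty(\Om)}$ up to $s=0$ (the $C^{2,1}$-regularity is interior in time; a bound of order $s^{-1/2}$ would suffice but must be proved, and the Duhamel contribution to $\Delta\veps$ is not controlled by the naive $(t-s)^{-1}$ semigroup estimate). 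The paper sidesteps this entirely by comparing $\ueps$ with the subsolution $\ubar{u}_\eps\equiv0$ through the comparison theorem [lanlan1, Theorem B.1], which respects the divergence structure and the regularity class at hand and needs only control of $\na\veps/\veps$ (available up to $t=0$), not of $\Delta\veps$. To close your argument, either invoke or prove such a divergence-form comparison (for instance by testing the $\ueps$-equation against $-(\ueps)_-$, using $\na\veps/\veps\in L^\infty((0,T);L^q(\Om))$), or build nonnegativity into the fixed point by truncating $u\mapsto u_+$ in the nonlinearities. Note that your lower bound for $\veps$ uses $\ueps\ge0$ to get $\f{\ueps}{(1+\eps\ueps)(1+\eps\veps)}\le\f1\eps$, so the gap propagates to that step until it is repaired.
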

\begin{proof}
 With an analogous approach to that in the proof of \cite[Theorem 2.2]{lanlan1}, local existence and extensibility follow from \cite[Lemma 3.1]{BBTW}; indeed, the situation of \cite[Theorem 2.2]{lanlan1} is more difficult due to the singularities in the system that have been removed in the present setting. Comparison (\cite[Theorem B.1]{lanlan1}) with the subsolution  $\ubar{u}_{\eps}=0$, due to $u_0\geq 0$ shows nonnegativity of $\ueps$. Because of
 \begin{align*}
  \ubar{v}_{\eps t}&=-\tel{\eps}\ubar{v}_{\eps}\leq Δ \ubar{v}_{\eps}-\frac{\ueps \ubar{v}_{\eps}}{(1+\eps\ueps)(1+\eps\ubar{v}_{\eps})} &&\quad\text{in }\Omega\times(0,T_{\max, \eps}),\\
  \ubar{v}_{\eps}(0)&=\inf v_0\leq v_0 &&\quad\text{in }\Omega, \\
  \partial_{\nu}\ubar{v}_{\eps}&=0 &&\quad\text{on  }\partial\Omega,
 \end{align*}
the function $\ubar{v}_{\eps}(x,t):=(\inf v_0) \euler^{-\frac{t}{\eps}}$ is a subsolution and another application of the comparison theorem  also shows  $\veps(\cdot,t)\geq (\inf v_0) \euler^{-\frac{t}{\eps}}>0$ for every $t\in[0,T_{\max, \eps})$.
\end{proof}

From now on, given a domain $\Om$, parameters $χ,κ,μ$ and initial data $(u_0,v_0)$ as in \eqref{cond:init} (in short: under the assumptions of Theorem \ref{thm:main}) we let $(\ueps,\veps)$ denote the unique solution to \eqref{syseps} on  $[0,T_{\max, \eps})$.\\
%

Some simple but important properties of $\ueps$, $\veps$ can be derived immediately and will become essential for the proof of globality of the solutions.
\begin{lemma}\label{lm:veps}
 Under the assumptions of Theorem \ref{thm:main}, for $ε>0$ and $p\in [1,\infty]$ we have \[                                            
\norm{p}{\veps(\cdot,t)}\leq\norm{p}{v_0}\quad\text{ for all }\; t\in [0,T_{\max,\eps}).
                                                                                          \]
\end{lemma}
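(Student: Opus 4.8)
The plan is to handle the case $p=\infty$ by a comparison argument and the cases $p\in[1,\infty)$ by testing the second equation of \eqref{syseps} with a suitable power of $\veps$.

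For $p=\infty$: Since $\ueps\ge 0$ and $\veps>0$, the function $\overline v\equiv\norm{\infty}{v_0}$ satisfies
\[
 \overline v_t=0\ge Δ\overline v-\frac{\ueps\,\overline v}{(1+\eps\ueps)(1+\eps\overline v)}\quad\text{in }\Om\times(0,T_{\max,\eps}),
\]
together with $\partial_\nu\overline v=0$ on $\partial\Om$ and $\overline v\ge v_0$ in $\Om$. Regarding the second equation of \eqref{syseps} as a linear parabolic problem for $\veps$ with the known coefficient $\ueps$, the comparison theorem (\cite[Theorem B.1]{lanlan1}) yields $\veps\le\norm{\infty}{v_0}$ throughout $\Om\times[0,T_{\max,\eps})$, which together with positivity of $\veps$ from Lemma \ref{epslok} is the assertion.

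For $p\in[1,\infty)$: Fix $T\in(0,T_{\max,\eps})$. By Lemma \ref{epslok}, $\veps$ is bounded away from $0$ on $\Ombar\times[0,T]$, so $\veps^{p}$ and $\veps^{p-2}$ are as regular there as $\veps$ is, and for $t\in(0,T)$ we may multiply the second equation of \eqref{syseps} by $p\veps^{p-1}$ and integrate over $\Om$. Using the homogeneous Neumann boundary condition for $\veps$, this gives
\[
 \dt\io\veps^{p}=-p(p-1)\io\veps^{p-2}|\nabla\veps|^2-p\io\frac{\ueps\,\veps^{p}}{(1+\eps\ueps)(1+\eps\veps)}\leq 0,
\]
since $p\ge 1$, $\ueps\ge 0$ and $\veps>0$ render both terms on the right-hand side nonpositive. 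Hence $t\mapsto\io\veps^{p}(\cdot,t)$ is nonincreasing on $(0,T)$; since it is continuous up to $t=0$, we obtain $\io\veps^{p}(\cdot,t)\le\io v_0^{p}$, i.e. $\norm{p}{\veps(\cdot,t)}\le\norm{p}{v_0}$, for every $t\in[0,T]$, and as $T<T_{\max,\eps}$ was arbitrary the claim follows.

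The computation is essentially routine; the only point deserving a little attention is the local integrability of $\veps^{p-2}|\nabla\veps|^2$ when $p\in[1,2)$, which is precisely why the pointwise lower bound $\veps(\cdot,t)\ge(\inf v_0)\euler^{-t/\eps}$ supplied by Lemma \ref{epslok} is needed.
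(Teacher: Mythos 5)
Your proof is correct and follows essentially the same route as the paper: for $p\in[1,\infty)$ the monotone decrease of $t\mapsto\io\veps^p$ (the paper only sketches this; your testing computation fills in the details, including the harmless $p\in[1,2)$ issue handled by the positive lower bound from Lemma \ref{epslok}), and for $p=\infty$ comparison with the constant supersolution $\norm{\infty}{v_0}$, exactly as in the paper. The only cosmetic remark is that the second equation is not literally linear in $\veps$ because of the factor $(1+\eps\veps)$, but the comparison theorem you invoke applies to it regardless, so the argument stands.
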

\begin{proof}
This results from nonnegativity of the derivative $\dt \io \veps^p$ on $(0,T_{\max,\eps})$ for $p\in[1,\infty)$ and, for $p=\infty$, from comparison with the constant supersolution $\norm{\infty}{v_0}$. 
%
\end{proof}

Due to the source terms in \eqref{syseps} being bounded, we can easily derive estimates also for the gradient of $v$ by semigroup estimates -- of course, the size of these bounds will depend on $ε$. 
\begin{lemma}\label{vepsgrad}
 Let $q\in[2,\infty)$. For every finite $T\leq T_{\max, \eps}$ there is $C=C(\eps,T)>0$, so that $\norm{q}{\nabla \veps(\cdot,t)}\leq C$ for  $t\in(0,T)$. 
\end{lemma}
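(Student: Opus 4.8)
The plan is to use the variation-of-constants (Duhamel) representation for $\veps$ together with smoothing estimates for the Neumann heat semigroup $(\euler^{tΔ})_{t\ge0}$ on $L^p(\Om)$. Writing the second equation of \eqref{syseps} as $\veps_t=Δ\veps+g_{\eps}$ with $g_{\eps}:=-\frac{\ueps\veps}{(1+\eps\ueps)(1+\eps\veps)}$, we have
\begin{align*}
\veps(\cdot,t)=\euler^{tΔ}v_0+\int_0^t\euler^{(t-s)Δ}g_{\eps}(\cdot,s)\diff s,\qquad t\in(0,T_{\max,\eps}).
\end{align*}
The crucial observation is that the nonlinear sink is pointwise bounded independently of $\ueps$: since $\frac{s}{1+\eps s}\le\tel{\eps}$ for $s\ge0$, we get $|g_{\eps}|\le\tel{\eps}\cdot\frac{\veps}{1+\eps\veps}\le\tel{\eps}\veps$, and by Lemma \ref{lm:veps} therefore $\norm{\infty}{g_{\eps}(\cdot,s)}\le\tel{\eps}\norm{\infty}{v_0}=:M_{\eps}$ uniformly in $s\in[0,T_{\max,\eps})$. (Even the cruder bound $|g_{\eps}|\le\tel{\eps^2}$ would suffice.)

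Next I would apply the gradient estimate for the Neumann heat semigroup (the standard $L^p$--$L^q$ type smoothing bound, e.g. \cite[Lemma 3.1]{lanlan1} or \cite[Lemma 1.3]{win_ct_sing_abs_eventual}): there is $c_1=c_1(q,\Om)>0$ such that $\Norm{L^q(\Om)}{\nabla\euler^{σΔ}w}\le c_1(1+σ^{-\frac12})\Norm{L^q(\Om)}{w}$ for all $w\in L^q(\Om)$ and $σ>0$. Applying this to the Duhamel formula gives
\begin{align*}
\norm{q}{\nabla\veps(\cdot,t)}\le\norm{q}{\nabla\euler^{tΔ}v_0}+\int_0^t\Norm{L^q(\Om)}{\nabla\euler^{(t-s)Δ}g_{\eps}(\cdot,s)}\diff s.
\end{align*}
For the first term I would use $v_0\in W^{1,\infty}(\Om)\subset W^{1,q}(\Om)$ together with the fact that $\euler^{tΔ}$ is bounded on $W^{1,q}(\Om)$ (uniformly for $t>0$), so that $\norm{q}{\nabla\euler^{tΔ}v_0}\le c_2\wnorm{v_0}$. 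For the integral term, $\norm{q}{g_{\eps}(\cdot,s)}\le|\Om|^{1/q}M_{\eps}$ by the uniform bound above, whence the integral is at most $c_1|\Om|^{1/q}M_{\eps}\int_0^t(1+(t-s)^{-\frac12})\diff s\le c_1|\Om|^{1/q}M_{\eps}(T+2\sqrt{T})$, which is finite since $T<\infty$. Collecting these yields $\norm{q}{\nabla\veps(\cdot,t)}\le C(\eps,T)$ for all $t\in(0,T)$, as claimed.

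There is no real obstacle here; this is a routine semigroup argument, and the only mild subtlety is making sure all constants are tracked as functions of $\eps$ and $T$ only (not of $t$ or of $\ueps$), which is exactly why the $\eps$-regularization of the sink term in \eqref{syseps} — turning an a priori uncontrolled $-\ueps\veps$ into a bounded forcing — was introduced. One alternative, avoiding the semigroup gradient estimate, would be to test the second equation with $-Δ\veps$ (or with $|\na\veps|^{q-2}Δ\veps$ type quantities) to derive an ODI for $\io|\na\veps|^q$; this also works but is messier for general $q$, so I would prefer the Duhamel approach sketched above.
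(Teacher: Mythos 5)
Your proposal is correct and follows essentially the same route as the paper: a Duhamel representation for $\veps$ combined with the $L^q$ gradient smoothing estimate for the Neumann heat semigroup, a bound on $\nabla\euler^{tΔ}v_0$ via $v_0\in W^{1,\infty}(\Om)$, and the observation that the regularized absorption term is bounded uniformly in terms of $\eps$ (the paper uses the cruder bound $\eps^{-2}$, you use $\eps^{-1}\norm{\infty}{v_0}$ via Lemma \ref{lm:veps}, which makes no difference). No gaps.
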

\begin{proof}
 According to Duhamel's formula, we can represent $\nabla \veps$ as 
 \begin{align*}
  \nabla \veps(\cdot,t)=\nabla \left(\euler^{t Δ}v_0\right)+\int_0^t\nabla \left(\euler^{(t-s)Δ}\frac{\ueps\veps}{(1+\eps\ueps)(1+\eps\veps)}\right)\diff s, \quad t\in(0,T_{\max,\eps}).
 \end{align*}
 Using the obvious estimate $\frac{a}{1+\eps a}\leq \tel{\eps}$ for $a\geq 0$ and semigroup estimates, we see that for every $t\in(0,T)$ we have 
 \begin{align*}
  \norm{q}{\nabla \veps(\cdot,t)}&\leq \norm{q}{\nabla \left(\euler^{t Δ}v_0\right)}+\int_0^t\norm{q}{\nabla \left(\euler^{(t-s)Δ}\frac{\ueps\veps}{(1+\eps\ueps)(1+\eps\veps)}\right)}\diff s\\
  &\leq c_1\norm{q}{\nabla v_0}+c_1\int_0^t\left(1+(t-s)^{-\tel{2}}\right)\norm{q}{\frac{\ueps}{(1+\eps\ueps)}\f{\veps}{(1+\eps\veps)}}\diff s\\
  &\leq c_1\norm{q}{\nabla v_0}+\frac{c_1|\Omega|^{\tel{q}}}{\eps^2}\int_0^T\left(1+(t-s)^{-\tel{2}}\right)\diff s=:C(\eps,T),
 \end{align*}
 where $c_1$ is the constant obtained from the semigroup estimates of \cite[Lemma 1.3 iii)]{lplq}.
\end{proof}
This ensures that also $\ueps$ remains bounded. 
\begin{lemma}\label{lm:ueps}
 Under the assumptions of Theorem \ref{thm:main}, for every $ε>0$ and every $T\leq T_{\max, \eps}$ with $T<\infty$ there is $C=C(\eps, T)>0$, so that $\norm{\infty}{\ueps(\cdot,t)}\leq C$ on  $(0,T)$.
\end{lemma}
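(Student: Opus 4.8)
The plan is to run a standard bootstrap-via-semigroup argument for $\ueps$, treating $\veps$ (and in particular $\nabla\veps$) as a known quantity thanks to Lemmas~\ref{lm:veps} and~\ref{vepsgrad}. First I would rewrite the $\ueps$-equation in Duhamel form,
\begin{align*}
 \ueps(\cdot,t)=\euler^{tΔ}u_0+\int_0^t\euler^{(t-s)Δ}\Big(\kappa\ueps-\mu\ueps^2\Big)(\cdot,s)\diff s-\chi\int_0^t\euler^{(t-s)Δ}\nabla\cdot\Big(\tfrac{\ueps}{(1+\eps\ueps)\veps}\nabla\veps\Big)(\cdot,s)\diff s,
\end{align*}
and note at once that the logistic term is favourable: $\kappa\ueps-\mu\ueps^2\le \tfrac{\kappa^2}{4\mu}$ pointwise, so the contribution of $\int_0^t\euler^{(t-s)Δ}(\kappa\ueps-\mu\ueps^2)\diff s$ to the $L^\infty$-norm is bounded by $\tfrac{\kappa^2}{4\mu}T$. (Alternatively one keeps $\kappa\ueps$ and absorbs $-\mu\ueps^2$, which only helps.) The $\euler^{tΔ}u_0$ term is bounded by $\|u_0\|_{L^\infty(\Omega)}$ by the maximum principle. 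Hence everything reduces to controlling the cross-diffusive term.

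For the cross-diffusive term I would use the smoothing estimate for $\euler^{tΔ}\nabla\cdot$ from $L^q$ to $L^\infty$, namely $\|\euler^{tΔ}\nabla\cdot z\|_{L^\infty(\Omega)}\le c\,(1+t^{-\frac12-\frac{n}{2q}})\|z\|_{L^q(\Omega)}$ for $q>n$, applied to $z=\tfrac{\ueps}{(1+\eps\ueps)\veps}\nabla\veps$. Here the $\eps$-regularization pays off once more: $\tfrac{\ueps}{1+\eps\ueps}\le\tfrac1\eps$, and $\veps(\cdot,s)\ge(\inf v_0)\euler^{-s/\eps}$ by Lemma~\ref{epslok}, so on $[0,T]$ we have $\tfrac1{\veps}\le\tfrac{\euler^{T/\eps}}{\inf v_0}$; combining these with the $L^q$-bound $\|\nabla\veps(\cdot,s)\|_{L^q(\Omega)}\le C(\eps,T)$ from Lemma~\ref{vepsgrad} (choosing any fixed $q>n$) gives $\|z(\cdot,s)\|_{L^q(\Omega)}\le C'(\eps,T)$ uniformly in $s\in(0,T)$. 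Since $\int_0^t(1+(t-s)^{-\frac12-\frac{n}{2q}})\diff s\le \int_0^T(1+\sigma^{-\frac12-\frac{n}{2q}})\diff\sigma<\infty$ precisely because $q>n$ makes the exponent $<1$, the cross-diffusive integral is bounded by a constant depending only on $\eps$ and $T$. Collecting the three contributions yields $\|\ueps(\cdot,t)\|_{L^\infty(\Omega)}\le C(\eps,T)$ for all $t\in(0,T)$, as claimed.

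There is one subtlety worth flagging: to invoke the $L^q$--$L^\infty$ smoothing bound for $\euler^{tΔ}\nabla\cdot$ on the \emph{Neumann} heat semigroup one cites the relevant parts of \cite[Lemma 1.3]{lplq} (the same reference already used in Lemma~\ref{vepsgrad}), and one should make sure $z$ has the integrability and regularity (here $z\in C^0(\overline\Omega\times(0,T))$ by the regularity asserted in Lemma~\ref{epslok}) needed for the variation-of-constants representation to be valid on $(0,T)$ with $T<T_{\max,\eps}$; this is routine given the classical solution already produced in Lemma~\ref{epslok}. The only genuine obstacle — and it is mild — is the near-$s=t$ singularity of the kernel: the whole point is that the exponent $\frac12+\frac n{2q}$ is strictly below $1$ as soon as $q>n$, so the time integral converges, and that is exactly why one picks $q>n$ at the start. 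No iteration is needed because, unlike in the production model, the nonlinearity $\ueps^2$ here enters with a \emph{good} sign; the argument is a single direct estimate.
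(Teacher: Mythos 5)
Your argument is correct and rests on essentially the same ingredients as the paper's proof: the variation-of-constants representation, the $L^q$--$L^\infty$ smoothing estimate for $\euler^{t\Delta}\nabla\cdot$ from \cite{lplq} with some $q>n$ (the paper takes $q=n+1$ and splits the product by H\"older into two $L^{2n+2}$-norms), the regularization bounds $\frac{\ueps}{1+\eps\ueps}\le\frac1\eps$ and $\veps\ge(\inf v_0)\euler^{-t/\eps}$ from Lemma \ref{epslok}, the gradient bound of Lemma \ref{vepsgrad}, and the elementary inequality $\kappa s-\mu s^2\le\frac{\kappa^2}{4\mu}$. The one structural difference is that the paper does not insert this last inequality directly into the Duhamel integral for $\ueps$; it instead introduces the auxiliary solution $\hat u_\eps$ of the same cross-diffusion equation with the logistic term replaced by the constant $\frac{\kappa^2}{4\mu}$, obtains $0\le\ueps\le\hat u_\eps$ from the comparison theorem \cite[Thm. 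B.1]{lanlan1}, and then runs the semigroup estimate on $\hat u_\eps$, whose source term is genuinely bounded. This detour is precisely what tidies up the one spot where your wording is imprecise: since $\kappa\ueps-\mu\ueps^2$ is only bounded from \emph{above} (it may be arbitrarily negative where $\ueps$ is large), the $L^\infty$-norm of $\int_0^t\euler^{(t-s)\Delta}(\kappa\ueps-\mu\ueps^2)\diff s$ is not a priori controlled by $\frac{\kappa^2}{4\mu}T$; what you actually obtain, using the order-preserving property of the Neumann heat semigroup, is the one-sided pointwise bound $\ueps\le\norm{\infty}{u_0}+\frac{\kappa^2}{4\mu}T+\chi\,\bigl|\int_0^t\euler^{(t-s)\Delta}\nabla\cdot\bigl(\tfrac{\ueps}{(1+\eps\ueps)\veps}\nabla\veps\bigr)\diff s\bigr|$, which suffices because $\ueps\ge0$. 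So either make this order-preservation/one-sidedness explicit, or adopt the paper's comparison function; with that bookkeeping settled, your direct single-step estimate and the paper's proof coincide in substance.
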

\begin{proof}
 The logistic map $f\colon \R\rightarrow \R, f(s)=\kappa s-\mu s^2$, satisfies $f(s)\leq \frac{\kappa^2}{4\mu}$ for every $s\in\R$.\\
 Let $\hat{u}_{\eps}$ be the solution to 
 \begin{align*}
  \hat{u}_{\eps t}=Δ\hat{u}_{\eps}-\chi\nabla\cdot\left(\frac{\hat{u}_{\eps}}{(1+\eps\hat{u}_{\eps}){\veps}}\nabla {\veps}\right)+\frac{\kappa^2}{4\mu},\quad \text{in } \Omega\times (0,T_{\max, \eps}).
 \end{align*}
 Then, due to the comparison theorem \cite[Thm. B.1]{lanlan1}, we have $\ueps\leq \hat{u}_{\eps}$ and hence, according to $\ueps\geq0$, also $\norm{\infty}{\ueps(\cdot,t)}\leq\norm{\infty}{\hat{u}_{\eps}(\cdot,t)}$.\\
 Now we consider $\norm{\infty}{\hat{u}_{\eps}(\cdot,t)}$. By representation of $\hat{u}$ in terms of the semigroup, corresponding  estimates (see \cite[Lemma 1.3 iv)]{lplq}) then for $t\in(0,T)$ show that with some $c_1>0$
 \begin{align*}
  \norm{\infty}{\hat{u}_{\eps}(\cdot,t)}\leq& \norm{\infty}{\euler^{tΔ}u_0}+\frac{T\kappa^2}{4\mu} 
  +\chi\int_0^t\norm{\infty}{\euler^{(t-s)Δ}\nabla\cdot\left(\frac{\hat{u}_{\eps}}{(1+\eps\hat{u}_{\eps}){\veps}}\nabla {\veps}\right)}\diff s\\
  \leq& \norm{\infty}{u_0}+\frac{T\kappa^2}{4\mu}
  +\chi c_1\int_0^t\left(1+(t-s)^{-\tel{2}-\frac{n}{2n+2}}\right)\norm{n+1}{\frac{\hat{u}_\eps}{1+\eps\hat{u}_\eps}\frac{\nabla \veps}{\veps}}\diff s\\
  \leq& \norm{\infty}{u_0}+\frac{T\kappa^2}{4\mu}
  +\frac{\chi c_1}{\eps}\int_0^t\left(1+(t-s)^{-\tel{2}-\frac{n}{2n+2}}\right)\norm{2n+2}{\nabla \veps}\norm{2n+2}{\tel{\veps}}\diff s.
 \end{align*}
 According to Lemma \ref{vepsgrad} there is $c_2>0$ satisfying $\norm{2n+2}{\nabla \veps(\cdot,t)}\leq c_2$ for $t\in(0,T)$. Moreover, 
 \begin{align*}
  \norm{2n+2}{\tel{\veps(\cdot,t)}}\leq \frac{\euler^{\eps T}}{\inf v_0}|\Omega|^{\tel{2n+2}}\qquad\text{for } t\in(0,T).
 \end{align*}
 Because, due to $-\tel{2}-\frac{n}{2n+2}=-\frac{2n+1}{2n+2}>-1$, the remaining integral 
 \begin{align*}
 \int_0^t\left(1+(t-s)^{-\tel{2}-\frac{n}{2n+2}}\right)\diff s
 \end{align*}
 is also finite and bounded independently of $t\in(0,T)$, we can conclude the existence of $C(ε,T)>0$ such that $\norm{\infty}{\hat{u}_{\eps}(\cdot,t)}\leq C(ε,T)$ on  $(0,T)$. Together with $\norm{\infty}{\ueps(\cdot,t)}\leq\norm{\infty}{\hat{u}_{\eps}(\cdot,t)}$, the claim follows.
\end{proof}
Summarily, these results show that $T_{\max, \eps}=\infty$ for every $\eps>0$:
\begin{proof}[Proof of Lemma \ref{epsglobal}]
 Suppose, $T_{\max, \eps}<\infty$. By Lemma \ref{epslok}, then 
\begin{align*}\norm{\infty}{\ueps(\cdot,t)}+\wnorm{\veps(\cdot,t)}\rightarrow \infty\qquad \text{ as } t\nearrow T_{\max, \eps}\end{align*}
 would have to hold. Lemmata \ref{lm:veps}, \ref{vepsgrad} and \ref{lm:ueps}, however, exclude this possibility.
\end{proof}
\section{A priori estimates}\label{sec:absch}
In order to obtain generalized solutions to \eqref{sys} from classical solutions of \eqref{syseps}, we will now derive $ε$-independent estimates for $\ueps$, $\veps$, $\log(\veps)$ and $\log(\ueps+1)$ in suitable spaces. The constants $C_i$ arising therein will continue to be used in the subsequent lemmata.

We begin with boundedness of the total bacterial mass, which is easily obtained even though the logistic source removes the mass conservation property many chemotaxis systems have. 
\begin{lemma}\label{lm1}
Under the assumptions of Theorem \ref{thm:main}, there is $C_1>0$ such that for all $\eps>0$
 \begin{align*}
  \norm{1}{\ueps(\cdot,t)}\leq C_1\quad\text{for all } t\in [0,\infty).
 \end{align*}
\end{lemma}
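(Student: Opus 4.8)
The plan is to integrate the first equation of \eqref{syseps} over $\Omega$ and exploit the logistic damping. Integrating $u_{\eps t}=\Delta\ueps-\chi\nabla\cdot(\tfrac{\ueps}{(1+\eps\ueps)\veps}\nabla\veps)+\kappa\ueps-\mu\ueps^2$ and using the homogeneous Neumann boundary conditions to kill the divergence terms, one obtains
\begin{align*}
 \dt\io\ueps=\kappa\io\ueps-\mu\io\ueps^2\qquad\text{on }(0,\infty).
\end{align*}
By the Cauchy--Schwarz inequality $\io\ueps\le|\Omega|^{1/2}(\io\ueps^2)^{1/2}$, equivalently $\io\ueps^2\ge\tfrac1{|\Omega|}(\io\ueps)^2$, so with $y(t):=\io\ueps(\cdot,t)$ we arrive at the scalar differential inequality $y'(t)\le\kappa y(t)-\tfrac\mu{|\Omega|}y(t)^2$.

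Next I would compare $y$ with the solution of the corresponding logistic ODE. The function $g(s)=\kappa s-\tfrac\mu{|\Omega|}s^2$ is negative for $s>\tfrac{\kappa|\Omega|}\mu$, so any solution of $y'\le g(y)$ satisfies $y(t)\le\max\{y(0),\tfrac{\kappa|\Omega|}\mu\}$ for all $t\ge0$: once $y$ drops below the threshold it cannot climb back above it, and if it starts above it is strictly decreasing until it reaches the threshold. A clean way to phrase this without invoking an ODE comparison lemma is to note that if at some time $y(t_0)>\tfrac{\kappa|\Omega|}{\mu}$ then $y$ is strictly decreasing on a neighbourhood, and a standard continuity argument shows $y$ cannot exceed $\max\{y(0),\kappa|\Omega|/\mu\}$ thereafter. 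Hence
\begin{align*}
 \norm1{\ueps(\cdot,t)}\le\max\left\{\norm1{u_0},\,\f{\kappa|\Omega|}{\mu}\right\}=:C_1\qquad\text{for all }t\in[0,\infty),
\end{align*}
which is independent of $\eps$ as required. (In the degenerate case $\kappa=0$ the term $\kappa y$ drops out and one simply gets $y'\le0$, so $C_1=\norm1{u_0}$ works.)

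I do not anticipate a genuine obstacle here; the only mild subtlety is making the passage from the differential inequality to the pointwise bound rigorous without overstating regularity. Since $\ueps$ is a classical solution on $[0,\infty)$ (by Lemma \ref{epsglobal}), $y$ is $C^1$ on $(0,\infty)$ and continuous up to $t=0$ with $y(0)=\norm1{u_0}$, so the continuity/contradiction argument above is legitimate; alternatively one can quote the elementary fact that a nonnegative $C^1$ function satisfying $y'\le\kappa y-\tfrac{\mu}{|\Omega|}y^2$ is bounded by $\max\{y(0),\kappa|\Omega|/\mu\}$. Either route gives $C_1$ depending only on $\Omega$, $\kappa$, $\mu$ and $\norm1{u_0}$, and hence on the data of Theorem \ref{thm:main} alone.
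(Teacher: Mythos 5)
Your proof is correct and follows essentially the same route as the paper: integrate the first equation of \eqref{syseps}, use Cauchy--Schwarz to obtain $\frac{\diff}{\diff t}\io\ueps\le\kappa\io\ueps-\frac{\mu}{|\Omega|}\bigl(\io\ueps\bigr)^2$, and conclude by ODI comparison. The only difference is that you spell out the comparison argument which the paper dispatches with the phrase ``the claim follows by an ODI comparison immediately.''
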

\begin{proof}
Due to $\ueps\geq 0$ we have $\io \ueps(\cdot,t)=\norm{1}{\ueps(\cdot,t)}$. We can derive a differential inequality for $\io \ueps$: 
 \begin{align*}
  \dt \io \ueps =\io u_{\eps t} = \kappa \io \ueps -\mu \io \ueps^2\leq \kappa \io \ueps -\frac{\mu}{|\Omega|}\left(\io \ueps\right)^2 \qquad \text{on }(0,\infty),
 \end{align*}
and the claim follows by an ODI comparison immediately.
\end{proof}

The spatio-temporal $L^2$ estimate we are about to obtain in the following lemma heavily relies on presence of the logistic source. 
\begin{lemma}\label{lm2}
Under the assumptions of Theorem \ref{thm:main}, for every $T>0$ there is $C_2=C_2(T)>0$, such that for all $\eps>0$ we have
 \begin{align*}
  \int_0^T\!\!\io \ueps^2\leq C_2.
 \end{align*}
\end{lemma}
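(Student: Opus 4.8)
The natural approach is to integrate the first equation of \eqref{syseps} over $\Omega$ — which we already did in Lemma \ref{lm1} — but now to retain rather than discard the dissipative $-\mu\int_\Omega\ueps^2$ term. From
\[
 \dt\io\ueps=\kappa\io\ueps-\mu\io\ueps^2\qquad\text{on }(0,\infty),
\]
integrating over $(0,T)$ gives
\[
 \mu\int_0^T\!\!\io\ueps^2=\io\ueps(\cdot,0)-\io\ueps(\cdot,T)+\kappa\int_0^T\!\!\io\ueps\le \io u_0+\kappa\int_0^T\!\!\io\ueps,
\]
where the last term is controlled by $\kappa T C_1$ thanks to Lemma \ref{lm1}. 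Dividing by $\mu>0$ yields the bound with $C_2(T):=\tfrac1\mu\big(\norm{1}{u_0}+\kappa T C_1\big)$.

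The one point that needs a word of care is the manipulation $\int_0^T \dt(\io\ueps)=\io\ueps(\cdot,T)-\io\ueps(\cdot,0)$: this requires $t\mapsto\io\ueps(\cdot,t)$ to be absolutely continuous on $[0,T]$, which follows from the regularity asserted in Lemma \ref{epslok} (classical solution, global by Lemma \ref{epsglobal}) together with continuity up to $t=0$; alternatively one simply notes that the ODE for $y(t):=\io\ueps(\cdot,t)$ has a continuously differentiable solution. Nonnegativity of $\ueps$ is what lets us drop the (nonnegative) term $\io\ueps(\cdot,T)$. The constant is manifestly independent of $\eps$, as required, since neither $C_1$ nor $u_0$ depends on $\eps$.

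I do not anticipate any real obstacle here; the estimate is, as the text says, a direct consequence of the presence of the logistic absorption term $-\mu\ueps^2$, and the only input beyond the $L^1$ bound of Lemma \ref{lm1} is the elementary integration of the mass ODE.
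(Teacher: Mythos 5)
Your argument is correct and is essentially the paper's own proof: integrating the mass identity $\dt\io\ueps=\kappa\io\ueps-\mu\io\ueps^2$ (equivalently, integrating the first equation of \eqref{syseps} over $\Omega\times(0,T)$), dropping the nonnegative term $\io\ueps(\cdot,T)$, and invoking Lemma \ref{lm1} to bound $\kappa\int_0^T\io\ueps$. The only cosmetic difference is that the paper also bounds $\io u_0$ by $C_1$, yielding $C_2=(\kappa T+1)C_1/\mu$ instead of your $\frac1\mu(\norm{1}{u_0}+\kappa T C_1)$.
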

\begin{proof}
We isolate $\ueps^2$ in the first equation of (\ref{syseps}), integrate over $\Omega\times[0,T)$ and use Lemma \ref{lm1}: 
 \begin{equation*}
  \int_0^T\!\!\io \ueps^2=\tel{\mu}\left(\kappa\int_0^T\!\!\io\ueps-\io \ueps(\cdot,T)+\io u_0 \right) 
  \leq \frac{(\kappa T+1)C_1}{\mu}=:C_2(T)\quad \text{for any } \eps>0.\qedhere
 \end{equation*}
\end{proof}

On our way to further estimates on $\ue$, especially concerning its derivatives, we include some gradient information for $\ve$ by means of its logarithm -- which is how $\ve$ appears in the chemotaxis term. In Section \ref{sec:nablalogv} we will deal with $\nabla \log \ve$ in some more detail, since its convergence will play a crucial role in finding the generalized solution we are searching for. 
\begin{lemma}\label{lm3}
Under the assumptions of Theorem \ref{thm:main}, for every $T>0$ there is $C_3=C_3(T)>0$ such that for all $\eps>0$
 \begin{align*}
  \int_0^T \!\!\io |\nabla \log(\veps)|^2\leq C_3.
 \end{align*}
\end{lemma}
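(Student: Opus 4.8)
The quantity $\int_0^T\io |\nabla\log\veps|^2 = \int_0^T\io \frac{|\nabla\veps|^2}{\veps^2}$ is exactly the kind of term that appears when one tests the second equation of \eqref{syseps} with $\frac{1}{\veps}$ (or, more precisely, differentiates $\io \log\veps$ or $\io \veps^{-1}$ in time). So the plan is to compute $\dt\io\log\veps$ (or $\dt\left(-\io\log\veps\right)$, which has the better sign) along solutions of \eqref{syseps}. Using $\veps_t = \Delta\veps - \frac{\ueps\veps}{(1+\eps\ueps)(1+\eps\veps)}$ and integrating by parts, one obtains
\begin{align*}
 \dt\io(-\log\veps) = \io \frac{|\nabla\veps|^2}{\veps^2} + \io \frac{\ueps}{(1+\eps\ueps)(1+\eps\veps)}.
\end{align*}
The second term on the right is nonnegative but can be discarded (or bounded by $\io\ueps \le C_1$ via Lemma \ref{lm1}, which is even cleaner), so integrating in time over $[0,T]$ yields
\begin{align*}
 \int_0^T\io|\nabla\log\veps|^2 \le \io(-\log\veps(\cdot,T)) - \io(-\log v_0).
\end{align*}

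It then remains to control the boundary terms in time. For the initial datum, $-\io\log v_0$ is finite because $v_0\in W^{1,\infty}(\Om)$ is positive throughout $\Ombar$, so $\log v_0\in L^\infty(\Om)$; this contributes an $\eps$-independent constant depending only on $v_0$ and $|\Om|$. For the terminal term $-\io\log\veps(\cdot,T)$, I would split $-\log\veps = (\log\veps)_- - (\log\veps)_+ \le (\log\veps)_-$ and use that $\veps \ge (\inf v_0)\euler^{-T/\eps}$ from Lemma \ref{epslok} — but this bound degenerates as $\eps\to 0$, so it is unsuitable. Instead, the correct route is to bound $\io(-\log\veps(\cdot,T))$ from above by $\io(1/\veps(\cdot,T) - 1)$ or more simply to note $-\log s \le \frac1s$ is false, so better: use $-\log\veps \le \frac{1}{\veps} $ on $\{\veps\le 1\}$ is also not quite it. The clean fix is to work with $\io\veps^{-1}$ directly: $\dt\io\veps^{-1} = \io\veps^{-2}\veps_t = -2\io\frac{|\nabla\veps|^2}{\veps^3}\cdot\frac{?}{} $ — this introduces $\veps^{-3}$ weights and is messier.

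So I would in fact proceed with the $\log$-version and obtain the terminal bound differently: since $\ve$ solves a heat equation with a bounded (by Lemma \ref{lm:ueps} and Lemma \ref{lm:veps}, albeit $\eps$-dependently) or at least nonnegative $L^1$-in-space source being subtracted, one has $\veps(\cdot,T) \ge \euler^{T\Delta}v_0 - (\text{something})$; cleaner still, from $\veps_t \ge \Delta\veps - \ueps\veps$ and a lower-bound comparison. The honest $\eps$-independent lower bound I would use is a pointwise-in-time one coming from the $L^1$ mass control: by Lemma \ref{lm:veps}, $\norm{1}{\veps(\cdot,T)} \le \norm{1}{v_0}$, which gives an upper bound on $\veps$ in an averaged sense but not a lower bound. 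The genuine obstacle, therefore, is obtaining an $\eps$-uniform control on $\io(-\log\veps(\cdot,T))$ from above — equivalently, preventing $\veps(\cdot,T)$ from being too small on too large a set, uniformly in $\eps$. I expect this is resolved exactly as in \cite{win_ct_sing_abs}: one shows $\io(-\log\veps)$ stays bounded because its time derivative, while producing the good $|\nabla\log\veps|^2$ term, is otherwise controlled, so a Gronwall-type or direct integration argument on $t\mapsto\io(-\log\veps(\cdot,t))$ combined with the finiteness at $t=0$ closes the estimate; the nonnegativity of the discarded consumption term is what makes $t\mapsto\io(-\log\veps(\cdot,t))$ behave well. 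In short: the main step is the identity for $\dt\io(-\log\veps)$, the main obstacle is the $\eps$-uniform bound on the terminal value $\io(-\log\veps(\cdot,T))$, and I would handle it by the same device used for the analogous functional in \cite{win_ct_sing_abs}, exploiting Lemma \ref{lm1} and the sign of the absorption contribution.
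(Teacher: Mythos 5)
There is a genuine gap, and it originates in a sign error in your key identity. Testing the second equation of \eqref{syseps} with $-\tel{\veps}$ and integrating by parts gives
\begin{align*}
 \dt\io(-\log\veps)=\io\na\veps\cdot\na\kl{\tel{\veps}}+\io\frac{\ueps}{(1+\eps\ueps)(1+\eps\veps)}
 =-\io|\na\log\veps|^2+\io\frac{\ueps}{(1+\eps\ueps)(1+\eps\veps)},
\end{align*}
so the gradient term carries a \emph{minus} sign (diffusion decreases the convex functional $\io(-\log\veps)$); this is also exactly the computation appearing in Lemma \ref{lm10} of the paper. With the correct sign, rearranging and integrating over $(0,T)$, using Lemma \ref{lm1} for the consumption term, yields
\begin{align*}
 \int_0^T\!\!\io|\na\log\veps|^2\le -\io\log v_0+\io\log\veps(\cdot,T)+C_1T,
\end{align*}
and the time-$T$ term now requires an \emph{upper} bound on $\veps$, which Lemma \ref{lm:veps} supplies: $\veps\le\norm{\infty}{v_0}$ pointwise, hence $\io\log\veps(\cdot,T)\le|\Om|\log\norm{\infty}{v_0}$. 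This is precisely the paper's proof, phrased there via the normalization $w_\eps=-\log\kl{\veps/\norm{\infty}{v_0}}\ge0$, so that the terminal contribution $-\io w_\eps(\cdot,T)\le 0$ can simply be dropped.

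Because of the flipped sign you instead arrive at $\int_0^T\io|\na\log\veps|^2\le\io(-\log\veps(\cdot,T))+\io\log v_0$, and you then correctly sense that closing this would need an $\eps$-uniform lower bound on $\veps(\cdot,T)$ — which indeed does not exist (only $(\inf v_0)\euler^{-T/\eps}$ is available, and in the limit $v$ may become small). That obstacle is self-inflicted: the term you are trying to control from above is not the one the correct computation produces. Moreover, the concluding appeal to \cite{win_ct_sing_abs} and a Gronwall argument is not a proof: under your identity, $t\mapsto\io(-\log\veps(\cdot,t))$ would be nondecreasing with derivative at least $\io|\na\log\veps|^2$, so bounding its terminal value is equivalent to the very estimate you want; and the remark that ``nonnegativity of the discarded consumption term'' makes this functional behave well is backwards, since consumption pushes $\io(-\log\veps)$ up — what actually controls it (cf.\ Lemma \ref{lm10}) is the dissipative sign of the gradient term together with Lemma \ref{lm1}. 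In short: right functional and right use of Lemma \ref{lm1}, but the argument as written does not close; correcting the sign makes it close immediately and reproduces the paper's proof.
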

\begin{proof} We fix $T>0$. Denoting $w_{\eps}:=-\log\left(\frac{\veps}{\norm{\infty}{v_0}}\right)$, we see that in $\Om\times(0,T)$ 
 \begin{align*}
  \nabla w_{\eps}&=-\nabla \log\left(\frac{\veps}{\norm{\infty}{v_0}}\right)=-\nabla\log(\veps)+\nabla\log(\norm{\infty}{v_0})=-\nabla \log(\veps),
   \end{align*}
as well as 
 \begin{align*}
  w_{\eps}&\geq 0\quad\text{and }\quad w_{\eps t}=Δ w_{\eps}-|\nabla w_{\eps}|^2+\frac{\ueps}{(1+\eps\ueps)(1+\eps\veps)}
 \end{align*}
hold for any $\eps>0$. Hence on $(0,T)$ 
 \begin{align*}
  \io |\nabla \log(\veps)|^2=\io Δ w_{\eps} +\io \underbrace{\frac{\ueps}{(1+\eps\ueps)(1+\eps\veps)}}_{\leq \ueps} -\io w_{\eps t}\leq C_1 -\io w_{\eps t} \quad \text{ for any }\eps>0 
 \end{align*}
by Lemma \ref{lm1}. Integration with respect to time due to nonnegativity of $w_{\eps}$ results in 
 \begin{equation*}
  \int_0^T \!\!\io |\nabla \log(\veps)|^2 \leq C_1 T-\io w_{\eps}(\cdot,T)+\io w_0 
  \leq C_1 T-\io \log\left(\frac{v_0}{\norm{\infty}{v_0}}\right)=:C_3(T) \; \text{ for any }\eps>0.\qedhere
 \end{equation*}
\end{proof}

As to derivative information on $\ue$, it is possible to garner the results of some differential inequality satisfied by the integral of its logarithm. This proof already employs the result of Lemma \ref{lm3}. 

\begin{lemma}\label{lm4}
Under the assumptions of Theorem \ref{thm:main}, for every $T>0$ there is $C_4=C_4(T)>0$, such that 
 \begin{align*}
  \int_0^T\!\!\io |\nabla \log(\ueps+1)|^2\leq C_4
 \end{align*}
 for all $\eps>0$. 
\end{lemma}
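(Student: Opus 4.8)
The plan is to extract a differential identity for $t\mapsto\io\log(\ueps+1)$ by testing the first equation of \eqref{syseps} with $\f1{1+\ueps}$, to integrate it in time, and then to dispose of every term other than $\int_0^T\io|\nabla\log(\ueps+1)|^2$ by means of Lemmata \ref{lm1} and \ref{lm3}, absorbing a controllable fraction of the target quantity back into the left-hand side.

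First I would use that, by Lemma \ref{epsglobal}, $(\ueps,\veps)$ is a global classical solution with $\partial_\nu\ueps=0$ and $\partial_\nu\veps=0$ on $\partial\Om$, so that in particular $\nabla\log(\veps)\cdot\nu=0$ there. Multiplying the first equation of \eqref{syseps} by $\f1{1+\ueps}$ and integrating over $\Om$, using $\f{\Delta\ueps}{1+\ueps}=\Delta\log(\ueps+1)+|\nabla\log(\ueps+1)|^2$ together with an integration by parts and rewriting $\f{\ueps\nabla\ueps}{(1+\ueps)^2}=\f{\ueps}{1+\ueps}\nabla\log(\ueps+1)$ in the chemotactic term, I expect to arrive at the identity
\begin{align}\label{auchfuerklassische}
 \dt\io\log(\ueps+1)=\io|\nabla\log(\ueps+1)|^2-\chi\io\f{\ueps}{(1+\ueps)(1+\eps\ueps)}\nabla\log(\ueps+1)\cdot\nabla\log(\veps)+\io\f{\kappa\ueps-\mu\ueps^2}{1+\ueps}
\end{align}
on $(0,\infty)$, for each $\eps>0$. (This is the computation referred to in the proof of the compatibility statement above; under the regularity \eqref{classsol} it can equally be performed with $\eps=0$.)

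I would then rearrange \eqref{auchfuerklassische} so that $\io|\nabla\log(\ueps+1)|^2$ is alone on the left, integrate over $(\delta,T)$ for $\delta\in(0,T)$ — licit because $\nabla\ueps$ is bounded on the compact set $\Ombar\times[\delta,T]$ — and finally let $\delta\downarrow0$ using continuity of $t\mapsto\io\log(\ueps+1)$ on $[0,\infty)$ and monotone convergence. In the integrated identity: the boundary term $\io\log(\ueps(\cdot,T)+1)$ is at most $\io\ueps(\cdot,T)\le C_1$ since $\log(s+1)\le s$ and by Lemma \ref{lm1}, whereas $-\io\log(\ueps(\cdot,\delta)+1)\le0$; by $\f{\ueps}{(1+\ueps)(1+\eps\ueps)}\le1$ and Young's inequality the chemotactic term is bounded by $\f12\int_\delta^T\io|\nabla\log(\ueps+1)|^2+\f{\chi^2}2 C_3$ by Lemma \ref{lm3}; and since $\f{\kappa\ueps-\mu\ueps^2}{1+\ueps}\ge-\mu\ueps$ (as $(\kappa+\mu)\ueps\ge0$), the logistic contribution satisfies $-\int_\delta^T\io\f{\kappa\ueps-\mu\ueps^2}{1+\ueps}\le\mu\int_0^T\io\ueps\le\mu TC_1$, again by Lemma \ref{lm1}. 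Absorbing $\f12\int_\delta^T\io|\nabla\log(\ueps+1)|^2$ into the left-hand side and passing to $\delta\downarrow0$ then yields $\int_0^T\io|\nabla\log(\ueps+1)|^2\le C_4(T)$ with, e.g., $C_4(T):=2C_1+\chi^2C_3(T)+2\mu TC_1$, which is independent of $\eps$; this is the assertion.

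I do not anticipate a serious obstacle. The two steps deserving a moment's thought are that the logistic term carries the unfavourable sign when $\ueps$ is large but is nevertheless tame because $\f{\ueps^2}{1+\ueps}\le\ueps$ is bounded uniformly in $\eps$ after integration in space by Lemma \ref{lm1} — this is where the logistic damping earns its keep (alternatively, $\f{\ueps^2}{1+\ueps}\le\ueps^2$ and the space-time $L^2$-bound of Lemma \ref{lm2} would do as well) — and that the singular chemotactic coupling is controlled only through the uniformly bounded prefactor $\f{\ueps}{(1+\ueps)(1+\eps\ueps)}\le1$ together with the $\eps$-independent gradient bound for $\log(\veps)$ from Lemma \ref{lm3}; the regularization near $t=0$ is routine.
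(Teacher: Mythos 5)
Your proposal is correct and follows essentially the same route as the paper: testing the first equation of \eqref{syseps} with $\tfrac{1}{1+\ueps}$ to obtain the differential identity for $\io\log(\ueps+1)$, integrating in time, using $\log(s+1)\le s$, $\tfrac{\ueps}{(1+\ueps)(1+\eps\ueps)}\le 1$, Young's inequality with Lemmata \ref{lm1} and \ref{lm3}, and absorbing half of the target term, arriving at the same constant $C_4=2(1+\mu T)C_1+\chi^2C_3$. The only cosmetic differences are your $\delta$-regularization near $t=0$ (the paper integrates directly from $0$, which is legitimate by the continuity of $\ueps$ on $\Ombar\times[0,\infty)$) and your combined estimate $\tfrac{\kappa\ueps-\mu\ueps^2}{1+\ueps}\ge-\mu\ueps$ in place of the paper's separate bounds $-\kappa\io\tfrac{\ueps}{\ueps+1}\le 0$ and $\tfrac{\ueps^2}{\ueps+1}\le\ueps$.
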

\begin{proof}
 We let $T>0$, fix $C_1$ and $C_3$ as in Lemma \ref{lm1} and Lemma \ref{lm3}, respectively, and let $\eps>0$. 
 Because $\ueps\ge 0$, also $\log(\ueps+1)\geq 0$. Moreover, for every $s>0$, apparently $\log(s+1)\leq s$. 
 Computing the time derivative of $-\io\log(\ueps+1)$, taking into account $\nabla \tel{\ueps+1}=-\frac{\nabla\ueps}{(\ueps+1)^2}$, from integration by parts we infer 
 \begin{align*}
  \dt&\left(-\io\log(\ueps+1)\right)\\
  &=-\io |\nabla \log(\ueps+1)|^2+\chi\io \frac{\ueps}{(\ueps+1)(1+\eps\ueps)}\nabla \log(\ueps+1) \cdot \nabla \log{\veps}
  -\kappa\io\frac{\ueps}{\ueps+1}+\mu\io\frac{\ueps^2}{\ueps+1}.
 \end{align*}
 Integration over $(0,T)$ with aid of Lemmata \ref{lm1} and \ref{lm3} and Young's inequality shows 
 \begin{align}\label{auchfuerklassische}
  \int_0^T\!\!\io &|\nabla\log(\ueps+1)|^2=\io \underbrace{\log(\ueps(\cdot,T)+1)}_{\leq \ueps(\cdot,T)}\underbrace{-\io \log(u_0+1)}_{\leq0} \nn\\
  &+\chi\int_0^T\!\!\io \underbrace{\frac{\ueps}{(\ueps+1)(1+\eps\ueps)}}_{\leq 1}\nabla \log(\ueps+1) \cdot \nabla \log{\veps} 
  \underbrace{-\kappa\int_0^T\!\!\io\frac{\ueps}{\ueps+1}}_{\leq 0}+\mu\int_0^T\!\!\io\underbrace{\frac{\ueps^2}{\ueps+1}}_{\leq \ueps}\nn\\
  \leq& C_1+\tel{2}\int_0^T\!\!\io |\nabla\log(\ueps+1)|^2 + \frac{\chi^2}{2}\int_0^T\!\!\io |\nabla\log(\veps)|^2+\mu C_1 T\nn\\
  \leq& (1+\mu T)C_1+\frac{\chi^2}{2}C_3+\tel{2}\int_0^T\!\!\io |\nabla\log(\ueps+1)|^2,
 \end{align}
 and hence 
 \begin{equation*}
  \int_0^T\!\!\io |\nabla\log(\ueps+1)|^2\leq 2(1+\mu T)C_1+\chi^2 C_3=:C_4.\qedhere
 \end{equation*}
\end{proof}

This bound on the logarithm actually entails an estimate for $\na \ue$ itself, thanks to the bound on $\int_0^T\io \ue^2$ obtained earlier. 
\begin{lemma}\label{lm5}
 Under the assumptions of Theorem \ref{thm:main}, for every $T>0$ there is $C_5=C_5(T)$, such that 
 \begin{align*}
  \Norm{L^1(\Omega\times(0,T))}{\nabla \ueps}\leq C_5 
 \end{align*}
 for all $\eps>0$.
\end{lemma}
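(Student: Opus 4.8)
The plan is to recover $\nabla\ueps$ from the bound on $\nabla\log(\ueps+1)$ by exploiting the elementary identity $\nabla\ueps=(\ueps+1)\,\nabla\log(\ueps+1)$, together with the pointwise factorisation of $L^1$ into a product of two $L^2$ functions. First I would write, for fixed $T>0$ and any $\eps>0$,
\begin{align*}
 \Norm{L^1(\Omega\times(0,T))}{\nabla\ueps}
 =\int_0^T\!\!\io|\nabla\ueps|
 =\int_0^T\!\!\io(\ueps+1)\,|\nabla\log(\ueps+1)|.
\end{align*}

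Next I would apply the Cauchy--Schwarz inequality on $\Omega\times(0,T)$ to this product, obtaining
\begin{align*}
 \int_0^T\!\!\io(\ueps+1)\,|\nabla\log(\ueps+1)|
 \leq\kl{\int_0^T\!\!\io(\ueps+1)^2}^{\tel2}\kl{\int_0^T\!\!\io|\nabla\log(\ueps+1)|^2}^{\tel2}.
\end{align*}
The second factor is bounded by $\sqrt{C_4}$ according to Lemma \ref{lm4}. For the first factor, expanding $(\ueps+1)^2\leq 2\ueps^2+2$ and invoking Lemma \ref{lm2} gives $\int_0^T\io(\ueps+1)^2\leq 2C_2+2T|\Omega|$, which is finite and independent of $\eps$. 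Combining these, one defines
\begin{align*}
 C_5(T):=\sqrt{(2C_2+2T|\Omega|)\,C_4}
\end{align*}
and the claimed estimate follows for every $\eps>0$.

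There is no serious obstacle here; the only point requiring a word of care is the validity of the product rule $\nabla\ueps=(\ueps+1)\nabla\log(\ueps+1)$ and of the Cauchy--Schwarz step, but since by Lemma \ref{epslok} each $\ueps$ is a genuine classical solution (in particular $C^{2,1}$ in $\Omega\times(0,T_{\max,\eps})$ with $T_{\max,\eps}=\infty$ by Lemma \ref{epsglobal}), all the manipulations are of functions that are smooth in the interior and continuous up to $t=0$, so no regularisation is needed and the integrals are classical. Thus the essence of the lemma is simply the observation that an $\eps$-uniform $L^2$ bound on $\ueps$ (Lemma \ref{lm2}) upgrades the $\eps$-uniform $L^2$ bound on $\nabla\log(\ueps+1)$ (Lemma \ref{lm4}) to an $\eps$-uniform $L^1$ bound on $\nabla\ueps$.
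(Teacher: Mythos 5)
Your proof is correct and follows essentially the same route as the paper: both factor $|\nabla\ueps|=\frac{|\nabla\ueps|}{\ueps+1}\,(\ueps+1)$ and combine the $\eps$-uniform bounds of Lemma \ref{lm4} and Lemma \ref{lm2}. The only cosmetic differences are that you use the Cauchy--Schwarz inequality where the paper uses Young's inequality, and you bound $(\ueps+1)^2\le 2\ueps^2+2$ instead of expanding the square and additionally invoking Lemma \ref{lm1}.
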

\begin{proof} We let $T>0$, fix $C_1$, $C_2$, $C_4$ as before and let $\eps>0$. 
Young's inequality together with Lemmata \ref{lm4}, \ref{lm1} and \ref{lm2} shows 
\begin{align*}
  \Norm{L^1(\Omega\times(0,T))}{\nabla \ueps}&=\int_0^T\!\!\io |\nabla\ueps|=\int_0^T\!\!\io \frac{|\nabla \ueps|}{\ueps+1}(\ueps+1)\\
  &\leq \tel{2}\int_0^T\!\!\io \frac{|\nabla\ueps|^2}{(\ueps+1)^2}+\tel{2}\int_0^T\!\!\io (\ueps+1)^2\\
  &=\tel{2}\underbrace{\int_0^T\!\!\io |\nabla \log(\ueps+1)|^2}_{\leq C_4}+\tel{2}\underbrace{\int_0^T\!\!\io \ueps^2}_{\leq C_2}+\int_0^T\underbrace{\io \ueps}_{\leq C_1}+\tel{2}|\Omega|T\\
  &\leq \frac{C_4}{2}+\frac{C_2}{2}+C_1T+\frac{|\Omega|T}{2}=:C_5.\qedhere
\end{align*}
\end{proof}

An immediate consequence of Lemma \ref{lm1} and Lemma \ref{lm5} is the following: 

\begin{kor}\label{kor6}
 Under the assumptions of Theorem \ref{thm:main}, for every $T>0$ there is $C_6=C_6(T)$, such that
 \begin{align*}
  \Norm{L^1((0,T);W^{1,1}(\Omega))}{\ueps}\leq C_6
 \end{align*}
 for all $\eps>0$. 
\end{kor}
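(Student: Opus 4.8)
The plan is to combine the two $\eps$-uniform bounds just established. By definition of the $W^{1,1}(\Omega)$-norm we have
\[
 \Norm{L^1((0,T);W^{1,1}(\Omega))}{\ueps} = \int_0^T \left( \io |\ueps(\cdot,t)| + \io |\nabla \ueps(\cdot,t)| \right) \diff t = \int_0^T \io \ueps + \Norm{L^1(\Omega\times(0,T))}{\nabla \ueps},
\]
where in the last step I used the nonnegativity of $\ueps$ (from Lemma \ref{epslok}) to drop the absolute value in the first term and Fubini's theorem to rewrite the spatio-temporal integral of $|\nabla\ueps|$.

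Next I would estimate the two resulting terms separately. For the first, Lemma \ref{lm1} gives $\io\ueps(\cdot,t)\le C_1$ for every $t\in[0,\infty)$, hence $\int_0^T\io\ueps\le C_1 T$. For the second, Lemma \ref{lm5} gives directly $\Norm{L^1(\Omega\times(0,T))}{\nabla\ueps}\le C_5(T)$. Adding these yields
\[
 \Norm{L^1((0,T);W^{1,1}(\Omega))}{\ueps}\le C_1 T + C_5(T) =: C_6(T),
\]
which is independent of $\eps>0$, as claimed.

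There is essentially no obstacle here: the corollary is a bookkeeping consequence of the preceding two results. The only point requiring a word of care is the interchange of the order of integration in $\int_0^T\io|\nabla\ueps|$ versus $\int_\Omega\int_0^T|\nabla\ueps|$, which is legitimate since $\nabla\ueps$ is continuous on $\overline\Omega\times(0,T]$ and, by Corollary/Lemma \ref{lm5}, integrable on $\Omega\times(0,T)$, so Tonelli's theorem applies to the nonnegative integrand $|\nabla\ueps|$. One should also note that $\ueps\in C^0(\overline\Omega\times[0,T_{\max,\eps}))\cap C^{2,1}(\overline\Omega\times(0,T_{\max,\eps}))$ with $T_{\max,\eps}=\infty$ (Lemma \ref{epsglobal}) guarantees that $\ueps(\cdot,t)\in W^{1,1}(\Omega)$ for a.e.\ $t$ and that $t\mapsto\Norm{W^{1,1}(\Omega)}{\ueps(\cdot,t)}$ is measurable, so the iterated integral above genuinely represents the $L^1((0,T);W^{1,1}(\Omega))$-norm.
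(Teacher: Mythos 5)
Your argument is correct and is exactly the one the paper intends: the corollary is stated there as an immediate consequence of Lemma \ref{lm1} and Lemma \ref{lm5}, with the $W^{1,1}$-norm split into $\int_0^T\io\ueps\le C_1T$ and $\Norm{L^1(\Omega\times(0,T))}{\nabla\ueps}\le C_5$, just as you do.
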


Estimates for $\ue$ ensured, we now turn our attention to $\na\ve$.  

\begin{lemma}\label{lm7}
 Under the assumptions of Theorem \ref{thm:main}, there is $C_7>0$, such that for all $\eps>0$ 
 \begin{align*}
  \norm{2}{\nabla \veps(\cdot,t)}\leq C_7
 \end{align*}
 holds for $t\in[0,\infty)$.\\
Moreover, for every $T>0$ there is $C_8=C_8(T)>0$, such that
 \begin{align*}
  \Norm{L^2(\Omega\times(0,T))}{Δ \veps}\leq C_8
 \end{align*}
 holds for all $\eps>0$.
\end{lemma}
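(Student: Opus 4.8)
The plan is to obtain both bounds from a single energy-type identity for $\io|\na\ve|^2$, exploiting the fact that the sink term $-\frac{\ueps\veps}{(1+\eps\ueps)(1+\eps\veps)}$ is nonnegative and pointwise dominated by $\ueps$, together with the mass bound of Lemma \ref{lm1} and the $L^\infty$-bound on $\veps$ from Lemma \ref{lm:veps}. First I would test the second equation of \eqref{syseps} with $-Δ\veps$ (equivalently, differentiate $\frac12\io|\na\veps|^2$ in time), integrate by parts using $\partial_\nu\veps=0$, and arrive at an identity of the shape
\begin{align*}
 \tel2\dt\io|\na\veps|^2 + \io|Δ\veps|^2 = \io \frac{\ueps\veps}{(1+\eps\ueps)(1+\eps\veps)}\,Δ\veps
 \le \io \ueps\,\norm{\infty}{v_0}\,|Δ\veps|,
\end{align*}
where I used $\veps\le\norm{\infty}{v_0}$ and $\ueps\ge0$. (A boundary term $\int_{\partial\Omega}\partial_\nu|\na\veps|^2$ appears after integration by parts; on nonconvex domains one controls it via the standard trace/Ehrling argument, absorbing it into $\io|Δ\veps|^2$ and a lower-order term — this is the one technical point where smoothness of $\partial\Omega$ and possibly an extra use of $\io|\na\veps|^2$ enter.)

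Next I would apply Young's inequality to the right-hand side, $\io\ueps\norm{\infty}{v_0}|Δ\veps|\le \tel2\io|Δ\veps|^2 + \frac{\norm{\infty}{v_0}^2}{2}\io\ueps^2$, which absorbs half of the dissipation and leaves
\begin{align*}
 \tel2\dt\io|\na\veps|^2 + \tel2\io|Δ\veps|^2 \le \frac{\norm{\infty}{v_0}^2}{2}\io\ueps^2 \qquad\text{on }(0,\infty).
\end{align*}
For the time-pointwise bound $\norm2{\na\veps(\cdot,t)}\le C_7$ I would instead not immediately drop the $\io|Δ\veps|^2$ term but combine it with a Poincaré-type inequality (using $\io\veps\le\io v_0$, so $\na\veps$ has controlled mean) to turn $\io|Δ\veps|^2$ into a multiple of $\io|\na\veps|^2$, obtaining $\dt y + c\,y \le C\io\ueps^2$ with $y(t)=\io|\na\veps(\cdot,t)|^2$; since $\int_0^{t}\io\ueps^2$ grows at most linearly while the $L^1$-in-time forcing against an exponentially decaying kernel stays bounded — more carefully, one uses that $\io\ueps^2$ is controlled on each unit time interval by Lemma \ref{lm2} with $T=1$ applied on shifted intervals, which is $\eps$-independent — a Gronwall/variation-of-constants estimate yields a bound on $y(t)$ uniform in $t\ge0$ and in $\eps>0$. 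That gives $C_7$. For $C_8=C_8(T)$ I would return to the absorbed inequality, integrate over $(0,T)$, and use nonnegativity of $\io|\na\veps(\cdot,T)|^2$ together with $\io|\na v_0|^2<\infty$ (from $v_0\in W^{1,\infty}(\Omega)$) and Lemma \ref{lm2} to conclude $\int_0^T\io|Δ\veps|^2 \le \io|\na v_0|^2 + \norm{\infty}{v_0}^2 C_2(T) =: C_8(T)^2$.

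The main obstacle I anticipate is the boundary term from integrating $\io\na\veps\cdot\na Δ\veps$ by parts: on a general smooth (not necessarily convex) domain one cannot simply discard $\int_{\partial\Omega}(\na\veps\cdot\na)\partial_\nu\veps$ — it must be estimated by $\Norm{L^2(\partial\Omega)}{\na\veps}^2$ times the curvature of $\partial\Omega$ and then handled by the inequality $\Norm{L^2(\partial\Omega)}{\na w}^2 \le \delta\Norm{L^2(\Omega)}{D^2w}^2 + C_\delta\Norm{L^2(\Omega)}{\na w}^2$ (valid for $w$ with $\partial_\nu w=0$), after which elliptic regularity relates $\Norm{L^2}{D^2\veps}$ to $\Norm{L^2}{Δ\veps}$. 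Everything else is routine: the uniform-in-$\eps$ character of all constants is automatic because the only inputs are Lemmata \ref{lm1}, \ref{lm2}, \ref{lm:veps} and the fixed data $v_0$, none of which involve $\eps$.
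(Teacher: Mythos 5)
Your proposal is correct and follows essentially the same route as the paper: differentiate $\io|\na\veps|^2$, integrate by parts and use $\veps\le\norm{\infty}{v_0}$ with Young's inequality to get $\dt\io|\na\veps|^2+\io|\Delta\veps|^2\le\norm{\infty}{v_0}^2\io\ueps^2$, convert the dissipation via the Neumann--Poincar\'e inequality $\io|\na\veps|^2\le C_P\io|\Delta\veps|^2$ into a linear ODI whose forcing is uniformly integrable on unit time intervals (by Lemmata \ref{lm1} and \ref{lm2}), and then integrate in time for the $\Delta\veps$ bound. The only caveat is that the ``main obstacle'' you anticipate does not exist: integrating by parts as $\io\na\veps\cdot\na\Delta\veps=-\io(\Delta\veps)^2+\int_{\partial\Omega}\Delta\veps\,\partial_\nu\veps$ with $\partial_\nu\veps=0$ yields exactly your identity with no curvature boundary term (that term only appears if one insists on $\io|D^2\veps|^2$ as dissipation), so no trace/Ehrling or convexity argument is needed.
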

\begin{proof}
We will derive a differential inequality for $y_\eps(t):=\io |\nabla \veps(\cdot,t)|^2$. 
On $(0,\infty)$ we have 
 \begin{align*}
  \dt \io |\nabla \veps|^2&=2\io \nabla \veps\cdot\nabla v_{\eps t}
  =2\io \nabla \veps\cdot\nabla Δ \veps-2\io \nabla \veps\cdot\nabla \left(\frac{\ueps\veps}{(1+\eps\ueps)(1+\eps\veps)}\right).
 \end{align*}
With integration by parts and Young's inequality we have 
 \begin{align}\label{eq:gradveps}
  \dt \io |\nabla \veps|^2&=-2\io |Δ \veps|^2+2\io Δ \veps\underbrace{\frac{\ueps\veps}{(1+\eps\ueps)(1+\eps\veps)}}_{\leq \ueps\veps}\nonumber\\
  &\leq -2 \io |Δ \veps|^2+\io |Δ \veps|^2+\io \ueps^2\veps^2\\
  &\leq -\io |Δ \veps|^2 +\norm{\infty}{v_0}^2\io \ueps^2\text{ on } (0,\infty)\nonumber,
 \end{align}
where in the last step we used $\veps^2\leq \norm{\infty}{v_0}^2$ in accordance with Lemma \ref{lm:veps}. Poincar\'{e}'s inequality furthermore yields $C_P>0$ satisfying 
 \begin{align*}
 -\io |Δ \veps(\cdot,t)|^2\leq -\tel{C_P}\io |\nabla \veps(\cdot,t)|^2 \qquad \text{for all } t>0,\; ε>0, 
 \end{align*}
so that, in conclusion, we obtain the differential inequality
 \begin{align*}
  \dt \io |\nabla \veps|^2+ \tel{C_P}\io |\nabla \veps|^2\leq \norm{\infty}{v_0}^2\io \ueps^2\quad \text{ on } (0,\infty)\quad\text{ for any } ε>0, 
 \end{align*}
where the integral in time of the right hand side can be controlled by Lemma \ref{lm2}. We can hence conclude 
the existence of a constant $\tilde{C_7}$ with $\io |\nabla \veps|^2\leq \tilde{C_7}$ on  $(0,\infty)$ (for the elementary proof see, e.g., \cite[Lemma 3.4]{lanlan1}). The first assertion follows upon letting $C_7:=\sqrt{\tilde{C_7}}$.\\
With the boundedness of $\io |\na \ve|^2$ guaranteed, differential inequality \eqref{eq:gradveps} actually has a second useful consequence:  
 From (\ref{eq:gradveps}) by integration from $0$ to $T$:
 \begin{align*}
   \Norm{L^2(\Omega\times(0,T))}{Δ \veps}^2&=\int_0^T\!\!\io |Δ \veps|^2\leq \norm{\infty}{v_0}^2\int_0^T\!\!\io \ueps^2+\io |\nabla v_0|^2-\io |\nabla \veps(\cdot, T)|^2\\
   &\leq \norm{\infty}{v_0}^2C_2+\io |\nabla v_0|^2=:\tilde{C_8} \quad \text{ for all }ε>0.
 \end{align*}
 Defining $C_8:=\sqrt{\tilde{C_8}}$ concludes the proof.
\end{proof}

At this point, we can control all terms on the right-hand side of the second equation of \eqref{syseps}, and accordingly also $v_{εt}$: 
\begin{kor}\label{kor9}
Under the assumptions of Theorem \ref{thm:main}, for every $T>0$ there is $C_9=C_9(T)>0$, such that for all $\eps>0$
 \begin{align*}
  \Norm{L^2(\Omega\times(0,T))}{v_{\eps t}}\leq C_9.
 \end{align*}
\end{kor}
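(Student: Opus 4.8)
The plan is to read off the bound on $v_{\eps t}$ directly from the second equation of \eqref{syseps}, using the estimates already assembled. Writing
\[
 v_{\eps t}=Δ\veps-\frac{\ueps\veps}{(1+\eps\ueps)(1+\eps\veps)}\qquad\text{in }\Omega\times(0,T),
\]
I would bound the $L^2(\Omega\times(0,T))$ norm of the right-hand side by the triangle inequality. The first term is controlled by Lemma \ref{lm7}, which gives $\Norm{L^2(\Omega\times(0,T))}{Δ\veps}\leq C_8$. For the second term I would use the pointwise estimate
\[
 0\leq\frac{\ueps\veps}{(1+\eps\ueps)(1+\eps\veps)}\leq \ueps\veps\leq \norm{\infty}{v_0}\,\ueps,
\]
where the last inequality relies on $\veps\leq\norm{\infty}{v_0}$ from Lemma \ref{lm:veps} (with $p=\infty$). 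Hence the $L^2(\Omega\times(0,T))$ norm of this term is at most $\norm{\infty}{v_0}\,(\int_0^T\io\ueps^2)^{1/2}\leq\norm{\infty}{v_0}\sqrt{C_2}$ by Lemma \ref{lm2}.

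Combining the two estimates yields
\[
 \Norm{L^2(\Omega\times(0,T))}{v_{\eps t}}\leq C_8+\norm{\infty}{v_0}\sqrt{C_2}=:C_9(T),
\]
which is independent of $\eps>0$, as claimed. All the constants $C_8=C_8(T)$, $C_2=C_2(T)$ and $\norm{\infty}{v_0}$ are $\eps$-independent, so $C_9$ depends only on $T$ (through $C_2$ and $C_8$) and on the fixed data.

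There is essentially no obstacle here: the corollary is a bookkeeping consequence of Lemmas \ref{lm:veps}, \ref{lm2} and \ref{lm7}. The only point requiring the slightest care is making sure the absorption term is controlled in $L^2$ rather than merely in $L^1$, which is exactly why the spatio-temporal $L^2$ bound of Lemma \ref{lm2} (itself a gift of the logistic source) is invoked rather than the mere mass bound of Lemma \ref{lm1}; the uniform $L^\infty$ bound on $\veps$ then turns $\ueps\veps$ into a harmless multiple of $\ueps$.
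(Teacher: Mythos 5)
Your argument is exactly the paper's proof: triangle inequality applied to the second equation of \eqref{syseps}, with $\Delta\veps$ controlled by the second part of Lemma \ref{lm7}, the absorption term bounded pointwise via $\veps\leq\norm{\infty}{v_0}$ (Lemma \ref{lm:veps}) and then in $L^2$ by Lemma \ref{lm2}, giving $C_9=C_8+\norm{\infty}{v_0}\sqrt{C_2}$. Correct, and identical in route and constants to the paper's version.
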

\begin{proof}
 We fix $T>0$. From the second part of Lemma \ref{lm7}, Lemma \ref{lm2} and Lemma \ref{lm:veps} we obtain that for all $ε>0$ 
 \begin{align*}
  \Norm{L^2(\Omega\times(0,T))}{v_{\eps t}}&\leq \Norm{L^2(\Omega\times(0,T))}{Δ \veps}+\Norm{L^2(\Omega\times(0,T))}{\frac{\ueps\veps}{(1+\eps\ueps)(1+\eps\veps)}}\\
  &\leq \Norm{L^2(\Omega\times(0,T))}{Δ \veps} + \norm{\infty}{v_0}\Norm{L^2(\Omega\times(0,T))}{\ueps}\\
  &\leq C_8+\norm{\infty}{v_0}\sqrt{C_2}=:C_9.\qedhere 
 \end{align*}
\end{proof}

Previously, we have obtained estimates for $\na \log\ve$. We now complement this by a bound for $\log \ve$ itself. 
\begin{lemma}\label{lm10}
Under the assumptions of Theorem \ref{thm:main}, for every $T>0$ there is $C_{10}=C_{10}(T)>0$, such that for all $\eps>0$
 \begin{align*}
  \norm{1}{\log(\veps(\cdot,t))}\leq C_{10}\qquad \text{for all } t\in (0,T).
 \end{align*}
\end{lemma}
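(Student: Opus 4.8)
The plan is to derive the bound in two parts, controlling the positive and negative parts of $\log(\veps(\cdot,t))$ separately. For the positive part this is immediate: by Lemma \ref{lm:veps}, $\veps(\cdot,t)\le \norm{\infty}{v_0}$, so $\log^+(\veps(\cdot,t))\le \log^+(\norm{\infty}{v_0})$ pointwise, and integrating over $\Om$ gives a time-independent bound. Hence the entire difficulty is to bound $\io \log^-(\veps(\cdot,t)) = \io \big(-\log\veps(\cdot,t)\big)_+$ from above, i.e. to prevent $\veps$ from becoming too small in an $L^1$-averaged sense. The $\eps$-dependent lower bound from Lemma \ref{epslok} is useless here since it degenerates as $\eps\to0$, so a genuinely $\eps$-independent argument is needed.

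The natural approach is to track the time evolution of $\io w_\eps$ where, as in the proof of Lemma \ref{lm3}, $w_\eps := -\log(\veps/\norm{\infty}{v_0}) \ge 0$. From the identity $w_{\eps t} = \Delta w_\eps - |\nabla w_\eps|^2 + \tfrac{\ueps}{(1+\eps\ueps)(1+\eps\veps)}$ established there, integration over $\Om$ and use of the Neumann boundary condition yields
\begin{align*}
 \dt\io w_\eps = -\io|\nabla w_\eps|^2 + \io\frac{\ueps}{(1+\eps\ueps)(1+\eps\veps)} \le \io\ueps \le C_1
\end{align*}
by Lemma \ref{lm1}. Integrating from $0$ to $t<T$ and using $w_\eps\ge0$ gives $\io w_\eps(\cdot,t) \le \io w_0 + C_1 T$, and $\io w_0 = -\io\log(v_0/\norm{\infty}{v_0})$ is finite and $\eps$-independent because $v_0$ is positive throughout $\Ombar$ and continuous (being in $W^{1,\infty}(\Om)$). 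Thus $\norm{1}{w_\eps(\cdot,t)} \le \io w_0 + C_1 T =: \tilde C_{10}(T)$ for all $t\in(0,T)$ and all $\eps>0$.

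Finally I would assemble the pieces: since $\veps\le\norm{\infty}{v_0}$, we have $\log\veps \le \log^+(\norm{\infty}{v_0})$ and $w_\eps = \log\norm{\infty}{v_0} - \log\veps \ge \log\norm{\infty}{v_0} - \log^+(\norm{\infty}{v_0})$, so $|\log\veps| \le w_\eps + |\log\norm{\infty}{v_0}| + |\log\norm{\infty}{v_0} - \log^+\norm{\infty}{v_0}|$ pointwise — more simply, $|\log\veps| \le w_\eps + 2\big|\log\norm{\infty}{v_0}\big|$ — whence $\norm{1}{\log(\veps(\cdot,t))} \le \tilde C_{10}(T) + 2|\Om|\,\big|\log\norm{\infty}{v_0}\big| =: C_{10}(T)$ for all $t\in(0,T)$ and all $\eps>0$. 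I do not anticipate a serious obstacle here: the only thing to be careful about is that $\io w_0$ is genuinely finite, which is exactly where the positivity requirement on $v_0$ in \eqref{cond:init} is used, and that all constants are tracked to be independent of $\eps$ (they are, since $C_1$ from Lemma \ref{lm1} is, and the boundary terms vanish by $\partial_\nu\veps=0$).
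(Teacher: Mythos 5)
Your proposal is correct and follows essentially the same route as the paper: the paper likewise computes $\dt\io(-\log\veps)\le\io\ueps\le C_1$ (the same differential inequality you derive via $w_\eps$, which differs only by the additive constant $\log\norm{\infty}{v_0}$), integrates in time using the finiteness of $-\io\log v_0$, and then converts the one-sided bound into a bound on $\norm{1}{\log(\veps(\cdot,t))}$. The only cosmetic difference is in that last conversion step: you use the pointwise bound $\veps\le\norm{\infty}{v_0}$ from Lemma \ref{lm:veps}, whereas the paper uses the elementary inequality $|\log s|\le 2s-\log s$ together with $\norm{1}{\veps}\le\norm{1}{v_0}$ — both are valid and equally $\eps$-independent.
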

\begin{proof}
 Due to $\nabla \tel{\veps}=-\frac{\nabla \veps}{\veps^2}$, integration by parts yields 
 \begin{align*}
  \dt \io (-\log(\veps))&=-\io \frac{v_{\eps t}}{\veps} = -\io \frac{Δ \veps}{\veps}+\io \frac{\ueps}{(1+\eps\ueps)(1+\eps\veps)} \\
  &=\io \nabla \veps \cdot \nabla\left(\tel{\veps}\right)+\io \frac{\ueps}{(1+\eps\ueps)(1+\eps\veps)} \\
  &\leq -\io |\nabla\log(\veps)|^2+\io \ueps\leq \io \ueps.
 \end{align*}
 Integrating this between $0$ and $T$, taking into account Lemma \ref{lm1} we obtain 
 \begin{align*}
  \io -\log(\veps)\leq -\io \log(v_0)+\int_0^T\!\!\io \ueps \leq -\io \log(v_0)+C_1T.
 \end{align*}
 Because $|\log(s)|\leq 2s-\log(s)$ holds for all $s>0$, in combination with Lemma \ref{lm:veps} we may conclude that 
 \begin{equation*}
  \norm{1}{\log(\veps)}=\io |\log(\veps)|\leq 2\io \veps - \io \log(\veps)
  \leq 2 \norm{1}{v_0}-\io \log(v_0)+C_1T=:C_{10}.\qedhere
 \end{equation*}
\end{proof}


In conclusion, this means the following for $\log \ve$: 
\begin{lemma}\label{lm11}
 Under the assumptions of Theorem \ref{thm:main}, for every $T>0$ there is $C_{11}=C_{11}(T)>0$, such that for all $\eps>0$
 \begin{align*}
  \Norm{L^2((0,T);W^{1,2}(\Omega))}{\log(\veps)}\leq C_{11}.
 \end{align*}
\end{lemma}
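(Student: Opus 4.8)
The plan is to combine the pointwise-in-time $L^1$-bound on $\log(\veps(\cdot,t))$ from Lemma \ref{lm10} with the spatio-temporal $L^2$-bound on $\nabla \log(\veps)$ from Lemma \ref{lm3}, using an elementary Poincaré–Wirtinger argument to upgrade the gradient control to control of $\log(\veps)$ itself in $L^2$. More precisely, for fixed $T>0$ and any $\eps>0$, I would split $\log(\veps(\cdot,t))$ into its spatial mean and the mean-zero remainder: writing $\overline{w}_\eps(t):=\tfrac1{|\Omega|}\io \log(\veps(\cdot,t))$, one has $\log(\veps(\cdot,t))-\overline{w}_\eps(t)\in L^2(\Omega)$ with zero mean, so the Poincaré–Wirtinger inequality yields a constant $C_P>0$ (depending only on $\Omega$) with
\begin{align*}
 \norm{2}{\log(\veps(\cdot,t))-\overline{w}_\eps(t)}\leq C_P\norm{2}{\nabla \log(\veps(\cdot,t))}\qquad\text{for a.e. }t\in(0,T).
\end{align*}
On the other hand, by Lemma \ref{lm10} and the Cauchy–Schwarz inequality, $|\overline{w}_\eps(t)|\leq \tfrac1{|\Omega|}\norm{1}{\log(\veps(\cdot,t))}\leq \tfrac{C_{10}}{|\Omega|}$, so $\norm{2}{\overline{w}_\eps(t)}=|\Omega|^{1/2}|\overline{w}_\eps(t)|\leq C_{10}|\Omega|^{-1/2}$, uniformly in $t\in(0,T)$ and $\eps>0$.

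Next I would assemble the full $W^{1,2}$-norm. By the triangle inequality,
\begin{align*}
 \norm{2}{\log(\veps(\cdot,t))}\leq \norm{2}{\log(\veps(\cdot,t))-\overline{w}_\eps(t)}+\norm{2}{\overline{w}_\eps(t)}\leq C_P\norm{2}{\nabla\log(\veps(\cdot,t))}+\frac{C_{10}}{|\Omega|^{1/2}},
\end{align*}
hence, squaring and integrating over $(0,T)$,
\begin{align*}
 \int_0^T\!\!\norm{2}{\log(\veps(\cdot,t))}^2\diff t\leq 2C_P^2\int_0^T\!\!\io |\nabla\log(\veps)|^2+\frac{2C_{10}^2T}{|\Omega|}\leq 2C_P^2 C_3+\frac{2C_{10}^2T}{|\Omega|},
\end{align*}
using Lemma \ref{lm3} for the first term. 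Adding the already-controlled term $\int_0^T\io|\nabla\log(\veps)|^2\leq C_3$ to account for the gradient part of the $W^{1,2}$-norm, one obtains
\begin{align*}
 \Norm{L^2((0,T);W^{1,2}(\Omega))}{\log(\veps)}^2=\int_0^T\!\!\left(\norm{2}{\log(\veps(\cdot,t))}^2+\norm{2}{\nabla\log(\veps(\cdot,t))}^2\right)\diff t\leq 2C_P^2C_3+\frac{2C_{10}^2T}{|\Omega|}+C_3,
\end{align*}
so the claim follows with $C_{11}(T):=\left(2C_P^2C_3+2C_{10}^2T|\Omega|^{-1}+C_3\right)^{1/2}$, which is finite and independent of $\eps$.

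This argument is essentially routine; there is no serious obstacle, since all the real analytic work has already been done in Lemmata \ref{lm3} and \ref{lm10}. The only point requiring a modicum of care is measurability and the applicability of Poincaré–Wirtinger for a.e.\ fixed $t$: since $\log(\veps)\in L^2((0,T);W^{1,2}(\Omega))$ is already known a priori (from $\veps\in C^{2,1}$ being bounded below on $[0,T]$ by a positive constant, cf.\ Lemma \ref{epslok}), the slice $\log(\veps(\cdot,t))$ lies in $W^{1,2}(\Omega)$ for a.e.\ $t$, and the mean $\overline{w}_\eps(t)$ depends measurably on $t$, so all the above manipulations are legitimate. Alternatively, one may avoid slicing entirely by invoking the version of the Poincaré inequality on the space-time cylinder directly, combined with the bound $\|\overline{w}_\eps\|_{L^2(0,T)}\leq C_{10}|\Omega|^{-1/2}T^{1/2}$; either route gives the same conclusion.
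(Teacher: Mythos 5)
Your proof is correct and follows essentially the same route as the paper: both combine the gradient bound of Lemma \ref{lm3} with the $L^1$-bound of Lemma \ref{lm10} through a Poincar\'e-type inequality, the only difference being that the paper directly invokes the inequality $\io y^2\le C_P\io|\na y|^2+|\Om|^{-1}\norm1{y}^2$ while you reprove it via the mean-zero decomposition and Poincar\'e--Wirtinger (at the cost of a slightly larger constant from the triangle inequality). No gaps; this is the intended argument.
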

\begin{proof}
Poincar\'e's inequality provides us with a constant $C_P>0$ such that 
\[
 \io y^2\le C_P \io |\na y|^2 + |\Om|^{-1}\norm1{y}^2\qquad \text{for all } y\in W^{1,2}(\Om).
\]
Taken together with Lemma \ref{lm3} and Lemma \ref{lm10}, this entails 
  \begin{align*}
  &\Norm{L^2((0,T);W^{1,2}(\Omega))}{\log(\veps)}^2=\int_0^T\!\!\io |\log(\veps)|^2+\int_0^T\!\!\io |\nabla\log(\veps)|^2\\
  &\leq C_P\int_0^T\!\!\io |\nabla \log(\veps)|^2+|\Omega|^{-1}\int_0^T\norm{1}{\log(\veps)}^2+\int_0^T\!\!\io |\nabla\log(\veps)|^2\\
  &\leq (C_P+1)C_3 +|\Omega|^{-1}TC_{10}^2=:\tilde{C}_{11}.
  \end{align*}
The definition $C_{11}:=\sqrt{\tilde{C}_{11}}$ directly results in the above claim. 
\end{proof}

In order to apply the Aubin--Lions lemma, we will additionally require estimates for the time derivatives of  $\ueps$ and $\log(\veps)$. 
For $v_{\eps t}$ we have already obtained a bound in $L^2(\Omega\times(0,T))$ in Corollary \ref{kor9}. While we cannot expect to find estimates for $u_{\eps t}$ and $\left(\log(\veps)\right)_t$ in such a ``good'' space, the following lesser regularity assertions will be sufficient: 

\begin{lemma}\label{lm_ut}
Under the assumptions of Theorem \ref{thm:main}, for every $T>0$, the set $\left\{u_{\eps t}\right\}_{\eps\in(0,1)}$ is bounded in $L^1\left((0,T);(W_0^{2,\infty}(\Omega))^*\right)$. 
\end{lemma}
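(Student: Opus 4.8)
The goal is an $\eps$-independent bound on $u_{\eps t}$ in $L^1((0,T);(W_0^{2,\infty}(\Omega))^*)$. The natural strategy is to test the first equation of \eqref{syseps} with an arbitrary $\varphi\in W_0^{2,\infty}(\Omega)$, move all derivatives off $u_\eps$ (and off $\tfrac{u_\eps}{1+\eps u_\eps}\nabla\log\veps$) by integrating by parts, and estimate each resulting term by a product of an $L^1_t L^1_x$-bounded quantity with $\Norm{W^{2,\infty}(\Omega)}{\varphi}$. Writing the equation in the form $u_{\eps t}=\Delta u_\eps-\chi\nabla\cdot\big(\tfrac{u_\eps}{1+\eps u_\eps}\nabla\log\veps\big)+\kappa u_\eps-\mu u_\eps^2$, one has for such $\varphi$
\begin{align*}
 \left|\io u_{\eps t}\varphi\right|\leq \io u_\eps|\Delta\varphi| + \chi\io \frac{u_\eps}{1+\eps u_\eps}|\nabla\log\veps|\,|\nabla\varphi| + \kappa\io u_\eps|\varphi| + \mu\io u_\eps^2|\varphi|,
\end{align*}
and since $\|\Delta\varphi\|_{L^\infty}$, $\|\nabla\varphi\|_{L^\infty}$, $\|\varphi\|_{L^\infty}$ are all controlled by $\Norm{W^{2,\infty}(\Omega)}{\varphi}$, this gives
\begin{align*}
 \Norm{(W_0^{2,\infty}(\Omega))^*}{u_{\eps t}(\cdot,t)}\leq c\left(\norm{1}{\ueps(\cdot,t)} + \norm{1}{\nabla\log\veps(\cdot,t)} + \norm{2}{\ueps(\cdot,t)}^2\right),
\end{align*}
using $\tfrac{u_\eps}{1+\eps u_\eps}\leq u_\eps$ is \emph{not} what we want here — rather we bound $\tfrac{u_\eps}{1+\eps u_\eps}\leq 1$, so that the chemotaxis term is controlled merely by $\|\nabla\log\veps\|_{L^1(\Omega)}$.

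\textbf{Assembling the time integral.} Integrating over $(0,T)$, the first term is bounded by $C_1 T$ via Lemma \ref{lm1}; the third term by $C_2$ via Lemma \ref{lm2}; and the chemotaxis term requires $\int_0^T\norm{1}{\nabla\log\veps}$, which follows from Lemma \ref{lm3} together with the Cauchy--Schwarz inequality in space and time, namely $\int_0^T\norm{1}{\nabla\log\veps}\leq |\Omega|^{1/2}\sqrt{T}\,\big(\int_0^T\io|\nabla\log\veps|^2\big)^{1/2}\leq |\Omega|^{1/2}\sqrt{T\,C_3}$. Hence one obtains a constant $C=C(T)$, independent of $\eps\in(0,1)$, with $\int_0^T\Norm{(W_0^{2,\infty}(\Omega))^*}{u_{\eps t}(\cdot,t)}\,\diff t\leq C$, which is the claim. (One should be slightly careful about the boundary integral produced by the integration by parts in the Laplacian and the divergence term; since $\varphi\in W_0^{2,\infty}(\Omega)$ vanishes on $\partial\Omega$, the relevant boundary terms — involving $\varphi$ itself, not $\partial_\nu\varphi$ — vanish, and the Neumann conditions $\partial_\nu\ueps=\partial_\nu\veps=0$ handle the remaining ones.)

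\textbf{Main obstacle.} The only genuinely delicate point is the chemotaxis term: a crude estimate bounding $\tfrac{u_\eps}{1+\eps u_\eps}$ by $u_\eps$ would leave $\int_0^T\io u_\eps|\nabla\log\veps|$, for which our a priori estimates (an $L^\infty_tL^1_x$ bound on $u_\eps$ and only an $L^2_{t,x}$ bound on $\nabla\log\veps$) do \emph{not} suffice. The resolution — and the reason the dual space $(W_0^{2,\infty})^*$ rather than, say, $(W^{1,\infty})^*$ or $(W_0^{1,2})^*$ is chosen — is precisely to allow the factor $\tfrac{u_\eps}{1+\eps u_\eps}$ to be estimated by the constant $1$ (so that one needs control of $\nabla\varphi$ in $L^\infty$, which $W^{2,\infty}$ provides), reducing the chemotaxis contribution to the $L^1_tL^1_x$-norm of $\nabla\log\veps$ alone, which Lemma \ref{lm3} supplies. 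Everything else is a routine duality-pairing computation; no compactness or interpolation beyond Cauchy--Schwarz is needed.
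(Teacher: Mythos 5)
Your overall strategy (duality pairing with $\varphi\in W_0^{2,\infty}(\Omega)$, integration by parts, termwise estimates) is the same as the paper's, but your treatment of the chemotaxis term contains a genuine error: the bound $\f{\ueps}{1+\eps\ueps}\leq 1$ is false. For $a\ge 0$ one only has $\f{a}{1+\eps a}\leq \min\left\{a,\tel{\eps}\right\}$; the quantity tends to $\tel{\eps}>1$ as $a\to\infty$, so the constant bound is $\eps$-dependent and useless for the uniform-in-$\eps$ estimate the lemma requires. (You are likely conflating this with $\f{u}{u+1}\leq 1$ or $\f{u}{(1+\eps u)(u+1)}\leq 1$, which appear elsewhere in the paper.) Consequently your reduction of the chemotaxis contribution to $\int_0^T\norm{1}{\nabla\log\veps}$, and the whole ``main obstacle/resolution'' discussion built on it, does not stand.

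Moreover, the route you dismiss as insufficient is exactly the correct one, and your reason for dismissing it is inconsistent with your own proof: you claim that after estimating $\f{\ueps}{1+\eps\ueps}\leq\ueps$ only an $L^\infty_t L^1_x$ bound on $\ueps$ is available, yet in the very same argument you invoke Lemma \ref{lm2}, i.e.\ the $\eps$-uniform bound $\int_0^T\io\ueps^2\leq C_2$, to handle the term $\mu\io\ueps^2|\varphi|$. The paper's proof uses precisely this: estimate $\f{\ueps}{1+\eps\ueps}\leq\ueps$, then by Young's inequality
\begin{align*}
 \chi\int_0^T\!\!\io \ueps|\nabla\log(\veps)|\,|\nabla\varphi|\leq \Norm{L^\infty(\Omega)}{\nabla\varphi}\left(\frac{\chi}{2}\int_0^T\!\!\io\ueps^2+\frac{\chi}{2}\int_0^T\!\!\io|\nabla\log(\veps)|^2\right)\leq \frac{\chi}{2}\,C_2+\frac{\chi}{2}\,C_3,
\end{align*}
with Lemmata \ref{lm2} and \ref{lm3}. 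With this replacement (and keeping your correct handling of the remaining terms via Lemmata \ref{lm1} and \ref{lm2}), the argument goes through; note also that the reason for the dual space $(W_0^{2,\infty}(\Omega))^*$ is primarily that the Laplacian term is thrown onto $\Delta\varphi\in L^\infty(\Omega)$ against the merely $L^1$-bounded $\ueps$, not the chemotaxis term.
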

\begin{proof}
 By definition of the norm and density of $C_0^\infty(\Om)$ in $W_0^{2,\infty}(\Omega)$, we have that 
 \begin{align*}
   &\Norm{L^1\left((0,T);(W_0^{2,\infty}(\Omega))^*\right)}{u_{\eps t}}=\int_0^T\sup_{\substack{\varphi \in W_0^{2,\infty}(\Omega)\\ \Norm{W_0^{2,\infty}}{\varphi}\leq 1}} \left\lvert\io u_{\eps t}\varphi\right\rvert=\int_0^T\sup_{\substack{\varphi \in C_0^{\infty}(\Omega)\\ \Norm{W_0^{2,\infty}}{\varphi}\leq 1}} \left\lvert\io u_{\eps t}\varphi\right\rvert,
\end{align*}
and for any $φ\in C_0^\infty(\Om)$ satisfying $\Norm{W^{2,∞}_0(\Om)}{φ}\le 1$, the equation for $u_{\eps t}$ and integration by parts show 
\begin{align*}\int_0^T \left\lvert\io u_{\eps t}\varphi\right\rvert 
   &\leq \int_0^T \left\lvert\io Δ\ueps\varphi\right\rvert+\int_0^T \chi\left\lvert\io\nabla\cdot \left(\frac{\ueps}{1+\eps\ueps}\frac{\nabla\veps}{\veps}\right)\varphi\right\rvert
   +\int_0^T \kappa\left\lvert\io\ueps\varphi\right\rvert+\int_0^T \mu\left\lvert\io\ueps^2\varphi\right\rvert\\
   &\leq \int_0^T \left\lvert\io \uepsΔ\varphi\right\rvert+\int_0^T \chi\left\lvert\io \frac{\ueps}{1+\eps\ueps}\frac{\nabla\veps}{\veps}\cdot\nabla \varphi\right\rvert
   +\int_0^T \kappa\left\lvert\io\ueps\varphi\right\rvert+\int_0^T \mu\left\lvert\io\ueps^2\varphi\right\rvert\\
   &\leq\int_0^T \norm{\infty}{Δ\varphi}\left\lvert\io \ueps\right\rvert+\int_0^T\norm{\infty}{\nabla\varphi} \chi\left\lvert\io \ueps \nabla\log(\veps)\right\rvert\\
   &\hspace{1.5cm}+\int_0^T \kappa\norm{\infty}{\varphi}\left\lvert\io\ueps\right\rvert+\int_0^T \mu\norm{\infty}{\varphi}\left\lvert\io\ueps^2\right\rvert\\
   &\leq \int_0^T\!\!\io \ueps + \frac{\chi}{2} \int_0^T\!\!\io \ueps^2 +\frac{\chi}{2}\int_0^T\!\!\io |\nabla\log(\veps)|^2 +\kappa\int_0^T\!\!\io \ueps +\mu\int_0^T\!\!\io\ueps^2\\
   &\leq (1+\kappa)C_1T+\left(\frac{\chi}{2}+\mu\right)C_2+\frac{\chi}{2}C_3
 \end{align*}
if we apply Young's inequality and Lemmata \ref{lm1}, \ref{lm2} and \ref{lm3}. 
\end{proof}

\begin{lemma}\label{lm_logvt}
Under the assumptions of Theorem \ref{thm:main}, for every $T>0$, the set $\left\{(\log(\veps))_t\right\}_{\eps\in(0,1)}$ is bounded in $L^1\left((0,T);(W_0^{2,\infty}(\Omega))^*\right)$.
\end{lemma}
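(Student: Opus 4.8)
The plan is to mimic the proof of Lemma~\ref{lm_ut}: by definition of the dual norm and density of $C_0^\infty(\Om)$ in $W_0^{2,\infty}(\Om)$ it suffices to bound
\[
 \Norm{L^1((0,T);(W_0^{2,\infty}(\Omega))^*)}{(\log(\veps))_t}=\int_0^T\sup_{\substack{\varphi\in C_0^{\infty}(\Omega)\\ \Norm{W_0^{2,\infty}(\Omega)}{\varphi}\leq 1}}\left\lvert\io (\log(\veps))_t\varphi\right\rvert
\]
uniformly in $\eps\in(0,1)$. Starting from $(\log(\veps))_t=v_{\eps t}/\veps$ and the second equation of \eqref{syseps}, one has
\[
 \io (\log(\veps))_t\varphi=\io\f{Δ\veps}{\veps}\varphi-\io\f{\ueps}{(1+\eps\ueps)(1+\eps\veps)}\varphi .
\]

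First I would deal with the only genuinely delicate term $\io\f{Δ\veps}{\veps}\varphi$: since $Δ\veps$ is merely controlled in $L^2$ (Lemma~\ref{lm7}) and $1/\veps$ carries no $\eps$-uniform bound, it cannot be estimated directly, but one integration by parts removes the apparent singularity,
\[
 \io\f{Δ\veps}{\veps}\varphi=-\io\f1\veps\na\veps\cdot\na\varphi+\io\f{\varphi}{\veps^2}|\na\veps|^2=-\io\na\log(\veps)\cdot\na\varphi+\io\varphi|\na\log(\veps)|^2,
\]
so that everything is expressed through $\na\log(\veps)$, for which Lemma~\ref{lm3} supplies a uniform spatio-temporal $L^2$ bound. (Alternatively one could integrate by parts only once, writing $Δ\veps/\veps=Δ\log(\veps)+|\na\log(\veps)|^2$ and additionally invoking Lemma~\ref{lm10}; the route above avoids this and uses only Lemmata~\ref{lm1} and \ref{lm3}.)

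Then, using $\Norm{W_0^{2,\infty}(\Omega)}{\varphi}\le 1$, hence $\norm{\infty}{\varphi}\le 1$ and $\norm{\infty}{\na\varphi}\le 1$, together with $\f{\ueps}{(1+\eps\ueps)(1+\eps\veps)}\le\ueps$ and Young's inequality, I would estimate
\[
 \left\lvert\io (\log(\veps))_t\varphi\right\rvert\le\io|\na\log(\veps)|+\io|\na\log(\veps)|^2+\io\ueps\le\tfrac32\io|\na\log(\veps)|^2+\tfrac{|\Om|}2+\io\ueps,
\]
and integrating over $(0,T)$ while invoking Lemma~\ref{lm3} for $\int_0^T\io|\na\log(\veps)|^2\le C_3$ and Lemma~\ref{lm1} for $\io\ueps\le C_1$ yields the bound $\tfrac32 C_3+\tfrac{|\Om|T}2+C_1T$, which is independent of $\eps\in(0,1)$ and proves the claim. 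The only step needing any thought is the one singled out above, namely the handling of the formally singular quotient $Δ\veps/\veps$; once the integration by parts is performed, the remainder is entirely routine and requires no a priori estimates beyond those already at our disposal.
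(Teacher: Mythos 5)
Your proposal is correct and takes essentially the same route as the paper's proof: you rewrite $(\log(\veps))_t$ via the second equation of \eqref{syseps}, integrate by parts once so that $\f{Δ\veps}{\veps}$ is expressed entirely through $\na\log(\veps)$, and then combine Young's inequality with Lemmata \ref{lm1} and \ref{lm3}, arriving at the same $\eps$-independent bound $\f{3C_3}{2}+\left(\f{|\Om|}{2}+C_1\right)T$ as in the paper.
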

\begin{proof}
For any $φ\in C_0^\infty(\Om)$ satisfying $\Norm{W^{2,∞}_0(\Om)}{φ}\le 1$, the equation for $v_{\eps t}$, integration by parts and Young's inequality result in 
  \begin{align*}
\int_0^T&\left\lvert\io (\log(\veps))_t\varphi\right\rvert 
    =\int_0^T \left\lvert\io (\log(\veps))_t\varphi\right\rvert\\
    =& \int_0^T \left\lvert\io \frac{Δ \veps}{\veps}\varphi-\io\frac{\ueps}{(1+\eps\ueps)(1+\eps\veps)}\varphi\right\rvert\\
    \leq& \int_0^T \left\lvert\io \frac{|\nabla \veps|^2}{\veps^2}\varphi
    -\io \frac{\nabla\veps}{\veps}\cdot\nabla\varphi-\io\frac{\ueps}{(1+\eps\ueps)(1+\eps\veps)}\varphi\right\rvert\\
    \leq& \int_0^T \norm{\infty}{\varphi}\io|\nabla\log(\veps)|^2 
     + \int_0^T \left(\tel{2}\io |\nabla\log(\veps)|^2+\tel{2}\io |\nabla\varphi|^2\right)
     +\int_0^T \norm{\infty}{\varphi}\io\ueps \\
     \leq& \frac{3C_3}{2}+\left(\frac{|\Omega|}{2}+C_1\right)T,
  \end{align*}
 where the last step relies on Lemmata \ref{lm1} and \ref{lm3}, so that 
 definition of the norm and density of $C_0^\infty(\Om)$ in $W_0^{2,\infty}(\Om)$ show that 
\[
 \Norm{L^1\left((0,T);(W_0^{2,\infty}(\Omega))^*\right)}{(\log(\veps))_t}\leq \frac{3C_3}{2}+\left(\frac{|\Omega|}{2}+C_1\right)T.\qedhere
\]
\end{proof}

\section{Construction of a generalized solution}\label{sec:constrgensol}
Aided by the estimates from Section \ref{sec:absch}, the Aubin--Lions lemma and some further (basic) functional analytic properties, we now construct a generalized solution $(u,v)$ as limit of a subsequence of $(\ueps,\veps)$. 


Firstly, we ensure convergence of the first component in a pointwise sense. 
\begin{lemma}\label{aubinu}
 Under the assumptions of Theorem \ref{thm:main}, there are $u\in L^1_{loc}(\Ombar\times[0,∞))$ and a sequence $\eps_j\searrow 0$ such that $u_{\eps_j}\rightarrow u$ in $L^1(\Omega\times(0,T))$ for any $T>0$ and $u_{\eps_j}\rightarrow u$ almost everywhere in $\Omega\times(0,∞)$ as $j\to \infty$.
\end{lemma}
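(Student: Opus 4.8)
The plan is to invoke the Aubin--Lions lemma on the family $\{\ueps\}_{\eps\in(0,1)}$ with a suitable Gelfand triple, and then extract a pointwise convergent subsequence by the usual reasoning. Concretely, I would first combine Corollary \ref{kor6}, which gives boundedness of $\{\ueps\}$ in $L^1((0,T);W^{1,1}(\Omega))$, with Lemma \ref{lm_ut}, which gives boundedness of $\{u_{\eps t}\}$ in $L^1((0,T);(W_0^{2,\infty}(\Omega))^*)$. The compact embedding I want is $W^{1,1}(\Omega)\hookrightarrow\hookrightarrow L^1(\Omega)$ (Rellich--Kondrachov, valid since $\Omega$ is bounded and smooth), together with the continuous embedding $L^1(\Omega)\hookrightarrow (W_0^{2,\infty}(\Omega))^*$ (which holds because $W_0^{2,\infty}(\Omega)\hookrightarrow C^0(\overline{\Omega})\hookrightarrow L^\infty(\Omega)$ in dimension $n\ge 1$, so $L^1(\Omega)$ pairs against it). The Aubin--Lions--Simon version applicable here (for first time-derivatives bounded merely in $L^1$ in time) then yields relative compactness of $\{\ueps\}$ in $L^1(\Omega\times(0,T))$ for each fixed $T>0$.

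Next I would run a diagonal sequence argument over $T=1,2,3,\dots$: for $T=1$ extract $\eps_j^{(1)}\searrow 0$ along which $\ueps$ converges in $L^1(\Omega\times(0,1))$; from this extract a further subsequence converging in $L^1(\Omega\times(0,2))$; and so on; the diagonal sequence $\eps_j$ then converges in $L^1(\Omega\times(0,T))$ for every $T>0$ to some limit which, by consistency of the limits on nested time intervals, defines a single function $u\in L^1_{loc}(\overline{\Omega}\times[0,\infty))$. Finally, $L^1$ convergence of $u_{\eps_j}\to u$ on $\Omega\times(0,T)$ allows passing to a further subsequence (again diagonalised over $T\in\mathbb{N}$) that converges almost everywhere on $\Omega\times(0,T)$, hence almost everywhere on $\Omega\times(0,\infty)$; relabelling, we keep the name $\eps_j$. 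Nonnegativity of $u$ is then inherited a.e.\ from $\ueps\ge 0$ (Lemma \ref{epslok}), though that is not asserted in the statement.

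The main obstacle, such as it is, is purely technical: verifying that the precise hypotheses of the Aubin--Lions lemma are met in the regime of $L^1$-in-time bounds on both $\ueps$ and $u_{\eps t}$ --- the classical Aubin--Lions statement assumes the time-derivative bound in $L^r$ with $r>1$, and one needs the Simon refinement (or an argument via a further uniform-integrability/equicontinuity-in-time estimate) to handle $r=1$. One must also be slightly careful that $\{\ueps\}$ being bounded in $L^1((0,T);W^{1,1}(\Omega))$ with the solenoidal middle space $L^1(\Omega)$ is enough for the relevant compactness conclusion in $L^1(\Omega\times(0,T))$, which it is precisely by Simon's compactness criterion since the time-translation estimate follows from the $(W_0^{2,\infty})^*$-bound on $u_{\eps t}$ and the space-translation/compactness estimate from the $W^{1,1}$-bound. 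No further genuine difficulty arises; the diagonal extraction and the passage from $L^1$ convergence to a.e.\ convergence are standard.
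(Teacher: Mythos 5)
Your proposal is correct and follows essentially the same route as the paper: the same combination of Corollary \ref{kor6} and Lemma \ref{lm_ut}, the embedding chain $W^{1,1}(\Omega)\hookrightarrow\hookrightarrow L^1(\Omega)\hookrightarrow (W_0^{2,\infty}(\Omega))^*$, the Aubin--Lions lemma in Simon's form (which is precisely how the paper, citing \cite[Cor.~8.4]{Simon1986}, accommodates the merely $L^1$-in-time bound on $u_{\eps t}$ that you rightly flag), and a diagonalization over increasing $T$ together with extraction of an a.e.\ convergent subsequence.
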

\begin{proof}
 We have that 
 \( W^{1,1}(\Omega)\hookrightarrow L^1(\Omega)\hookrightarrow (W_0^{2,\infty}(\Omega))^*\),
 where the embedding $W^{1,1}(\Omega)\hookrightarrow L^1(\Omega)$ is compact. By the Aubin--Lions lemma (\cite[Cor. 8.4]{Simon1986}), for any $T>0$, Corollary \ref{kor6} and Lemma \ref{lm_ut} entail relative compactness of $\left\{\ueps\right\}_{\eps\in(0,1)}$ in $L^1((0,T);L^1(\Omega))=L^1(\Omega\times(0,T))$. 
 Consequently, there is a sequence $\eps_j\searrow 0$ such that  $u_{\eps_j}\rightarrow u$ in $L^1(\Omega\times(0,T))$, and a.e. convergence along a subsequence results as well. 
 A diagonalization procedure (for more explicit details consult \cite[Section 4]{locallybounded}) ensures existence of $u$ on $\Om\times(0,∞)$ and independence of $(ε_j)_j$ from the choice of $T$. 
\end{proof}

Similarly, a limit of the second components of the solutions can be obtained. 
\begin{lemma}\label{aubinv}
 Under the assumptions of Theorem \ref{thm:main}, there is $v\in L^2_{loc}(\Ombar\times[0,∞))$ such that (along a subsequence of $(ε_j)_{j\inℕ}$ from Lemma \ref{aubinu}) $v_{\eps_j}\rightarrow v$ in $L^2(\Omega\times(0,T))$ and almost everywhere, as $j\to \infty$. 
 In particular, $v_{\eps_j}(\cdot,t)\to v(\cdot,t)$ in $L^2(\Om)$ as $j\to \infty$, for almost every $t>0$.
\end{lemma}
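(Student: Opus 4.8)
The plan is to mimic the argument of Lemma \ref{aubinu}, applying the Aubin--Lions lemma again but now in the $L^2$-based setting already prepared by the a priori estimates. First I would record the chain of embeddings
\[
 W^{1,2}(\Omega)\hookrightarrow L^2(\Omega)\hookrightarrow (W_0^{2,\infty}(\Omega))^*,
\]
in which the first embedding is compact by Rellich--Kondrachov (or, more modestly, by Sobolev embedding followed by a compact one) and the second is continuous since bounded functions multiplied against $W_0^{2,\infty}$ test functions are integrable. The boundedness of $\{\veps\}_{\eps\in(0,1)}$ in $L^2((0,T);W^{1,2}(\Omega))$ needed for the first slot follows from Lemma \ref{lm7} (which bounds $\norm{2}{\na\veps(\cdot,t)}$ uniformly in $t$ and $\eps$) together with Lemma \ref{lm:veps} (which bounds $\norm{2}{\veps(\cdot,t)}$), so that in fact $\veps$ is bounded even in $L^\infty((0,T);W^{1,2}(\Omega))$. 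For the time-derivative slot I would invoke Corollary \ref{kor9}, which provides a uniform bound on $v_{\eps t}$ in $L^2(\Omega\times(0,T))\hookrightarrow L^2((0,T);(W_0^{2,\infty}(\Omega))^*)$; alternatively one could cite Lemma \ref{lm_logvt}, but the $L^2$-bound from Corollary \ref{kor9} is cleaner here.

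With these two ingredients, \cite[Cor.~8.4]{Simon1986} yields, for each fixed $T>0$, relative compactness of $\{\veps\}_{\eps\in(0,1)}$ in $L^2((0,T);L^2(\Omega))=L^2(\Omega\times(0,T))$. Passing to a further subsequence of $(\eps_j)_{j\in\mathbb N}$ from Lemma \ref{aubinu} (so that the first-component convergence is retained) gives $\vj\to v$ strongly in $L^2(\Omega\times(0,T))$, and along a subsequence also a.e.\ in $\Omega\times(0,T)$. A diagonalization over $T=1,2,3,\dots$ -- exactly as in Lemma \ref{aubinu}, see \cite[Section 4]{locallybounded} -- produces a single limit function $v\in L^2_{\mathrm{loc}}(\Ombar\times[0,\infty))$ and a single subsequence (again not relabelled) along which $\vj\to v$ in $L^2(\Omega\times(0,T))$ and a.e.\ in $\Omega\times(0,\infty)$, for every $T>0$.

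For the final ``in particular'' assertion I would argue as follows: strong $L^2(\Omega\times(0,T))$ convergence means $\int_0^T\norm{2}{\vj(\cdot,t)-v(\cdot,t)}^2\,\diff t\to 0$, i.e.\ the functions $t\mapsto\norm{2}{\vj(\cdot,t)-v(\cdot,t)}^2$ tend to $0$ in $L^1(0,T)$; hence a subsequence converges to $0$ for a.e.\ $t\in(0,T)$, and another diagonalization in $T$ upgrades this to a.e.\ $t\in(0,\infty)$. In other words $\vj(\cdot,t)\to v(\cdot,t)$ in $L^2(\Omega)$ for a.e.\ $t>0$ along this (final) subsequence. I do not anticipate a genuine obstacle here; the only mild subtlety is bookkeeping -- making sure every extraction of a subsequence is compatible with the one from Lemma \ref{aubinu} and that the diagonalization is carried out consistently so that a single sequence $(\eps_j)$ works simultaneously for the first component, the second component, all time-slices, and all $T$. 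This is routine and can safely be compressed by referring to the explicit treatment in \cite[Section 4]{locallybounded}.
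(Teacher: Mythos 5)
Your argument is correct and essentially identical to the paper's: the same bounds (Lemma \ref{lm7}, Lemma \ref{lm:veps} for $L^\infty((0,T);W^{1,2}(\Omega))$, Corollary \ref{kor9} for $v_{\eps t}$) fed into \cite[Cor.~8.4]{Simon1986}, followed by subsequence extraction and diagonalization; the paper merely uses the triple $W^{1,2}(\Omega)\hookrightarrow L^2(\Omega)\hookrightarrow L^2(\Omega)$ instead of your third space $(W_0^{2,\infty}(\Omega))^*$, which is an immaterial difference. Your explicit derivation of the ``in particular'' time-slice convergence is a fine (slightly more detailed) rendering of what the paper leaves implicit.
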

\begin{proof} For $T>0$, 
 Lemma \ref{lm7} and Lemma \ref{lm:veps} imply that $\left\{\veps\right\}_{\eps\in(0,1)}$ is bounded in $L^{\infty}\left((0,T);(W^{1,2}(\Omega)\right)$ and hence also in $L^2((0,T);W^{1,2}(\Omega))$. Furthermore, according to Corollary \ref{kor9}, $\left\{v_{\eps t}\right\}_{\eps\in(0,1)}$ is bounded in $L^2((0,T);L^2(\Omega))$. Moreover we have 
 \( W^{1,2}(\Omega)\hookrightarrow L^2(\Omega) \hookrightarrow L^2(\Omega)\)
 where  $W^{1,2}(\Omega)\hookrightarrow L^2(\Omega)$ is a compact embedding by Rellich's theorem. Therefore, we may apply the Aubin--Lions lemma \cite[Cor. 8.4]{Simon1986}, so that (along a non-relabeled subsequence) $v_{\eps_j}\rightarrow v$ in $L^2(\Omega\times(0,T))$ as $j\to\infty$ follows, which also entails a.e. convergence of a further subsequence. Again, a diagonalization argument concludes the proof. 
\end{proof}

Our aim now is to show that $(u,v)$ is a generalized solution to \eqref{sys} in the sense of definition \ref{def:lsg}. We formulate this in the following theorem, whose proof we will give at the end of this section after additional preparation.

\begin{satz}\label{supersatz}
 Under the assumptions of Theorem \ref{thm:main}, the pair $(u,v)$ obtained above is a generalized solution to system (\ref{sys}).
\end{satz}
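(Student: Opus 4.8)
The plan is to verify that the limit pair $(u,v)$ obtained in Lemmata \ref{aubinu} and \ref{aubinv} satisfies all the requirements of Definition \ref{def:lsg}, by passing to the limit $\eps_j\searrow 0$ in the weak formulations satisfied by the smooth solutions $(\ueps,\veps)$ of the approximating system \eqref{syseps}. First I would record the regularity assertions: from Lemma \ref{lm2} and Fatou's lemma, $u\in L^2_{loc}([0,\infty);L^2(\Om))$; from Lemmata \ref{lm7}, \ref{lm11} and weak lower semicontinuity of norms, $v\in L^2_{loc}([0,\infty);W^{1,2}(\Om))$ (and $v\in L^\infty_{loc}$ by Lemma \ref{lm:veps} together with a.e.\ convergence), $\nabla\log(v)\in L^2_{loc}(\Om\times[0,\infty))$, and $\nabla\log(u+1)\in L^2_{loc}(\Om\times[0,\infty))$ from Lemma \ref{lm4}. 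Positivity of $v$ a.e.\ and nonnegativity of $u$ follow from the corresponding pointwise bounds on $\ueps,\veps$ together with a.e.\ convergence; the positivity of $v$ needs a little care since the lower bound $(\inf v_0)\euler^{-t/\eps}$ degenerates as $\eps\to 0$, so instead I would use that $\log(\veps)$ is bounded in $L^2_{loc}$ (Lemma \ref{lm11}) to conclude $\log(\veps)\rightharpoonup \log v$ in $L^2_{loc}$ (identifying the limit via a.e.\ convergence of $\veps$ and continuity of $\log$ on $(0,\infty)$, plus the a.e.\ finiteness forced by the $L^2$ bound), whence $v>0$ a.e.

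Next I would pass to the limit in the three inequalities/identities. For \eqref{second:weaksol}: $\veps\to v$ in $L^2_{loc}$ and $\nabla\veps\rightharpoonup\nabla v$ in $L^2_{loc}$ handle the linear terms, while for the product term one writes $\tfrac{\ueps\veps}{(1+\eps\ueps)(1+\eps\veps)}$ and uses $\ueps\to u$ in $L^1_{loc}$ (Lemma \ref{aubinu}) together with $\veps\to v$ a.e.\ and the uniform bound $\veps\le\|v_0\|_{L^\infty}$ plus $\tfrac{1}{(1+\eps\ueps)(1+\eps\veps)}\to1$ a.e.; a Vitali/dominated-convergence argument (domination by $\ueps\|v_0\|_{L^\infty}$ after extracting the a.e.\ convergent subsequence, or a direct estimate exploiting that the factors in $[0,1]$ converge to $1$) gives convergence in $L^1_{loc}$, hence the identity in the limit. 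For the subsolution inequality \eqref{eq:ssul}, tested against a fixed nonnegative $\varphi\in C_0^\infty(\Ombar\times[0,\infty))$ with $\partial_\nu\varphi=0$: the terms $-\iint \varphi_t\ueps$, $-\io u_0\varphi(\cdot,0)$, $\iint\ueps\Delta\varphi$, $\kappa\iint\ueps\varphi$ converge by $L^1_{loc}$ convergence of $\ueps$; the quadratic term $-\mu\iint\varphi\,\ueps^2$ only needs to converge along a subsequence with the correct inequality, and here I would use Fatou (in the form $\liminf\iint\varphi\,\ueps^2\ge\iint\varphi\,u^2$ since $\ueps\to u$ a.e.\ and $\varphi\ge0$), so that $-\mu\iint\varphi\,\ueps^2\to$ something $\le -\mu\iint\varphi\,u^2$, which preserves the inequality in the right direction. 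The chemotaxis term $\chi\iint\ueps\nabla\varphi\cdot\nabla\log(\veps)$ is the delicate one: one needs $\ueps\nabla\log(\veps)\rightharpoonup u\nabla\log(v)$ in, say, $L^1_{loc}$. The analogous passage in the supersolution inequality \eqref{eq:slol} uses $\log(\ueps+1)\to\log(u+1)$ in $L^1_{loc}$ (from $\ueps\to u$ in $L^1_{loc}$ and $a.e.$, with $0\le\log(\ueps+1)\le\ueps$), weak lower semicontinuity for the good term $+\iint\varphi|\nabla\log(\ueps+1)|^2$ (which is why the inequality sign in \eqref{eq:slol} is $\ge$), Fatou again for $-\mu\iint\tfrac{\ueps^2}{\ueps+1}\varphi$ with the $L^1_{loc}$ bound $\tfrac{\ueps^2}{\ueps+1}\le\ueps$, and convergence of $\tfrac{\ueps}{\ueps+1}\to\tfrac{u}{u+1}$ boundedly and a.e.

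The main obstacle is the convergence of the mixed gradient products, in particular $\ueps\nabla\log(\veps)\to u\nabla\log(v)$ and, in \eqref{eq:slol}, $\tfrac{\ueps}{\ueps+1}\nabla\log(\veps)\cdot\nabla\log(\ueps+1)$ and $\tfrac{\ueps}{\ueps+1}\nabla\log(\veps)\cdot\nabla\varphi$. For the last of these one only needs $\nabla\log(\veps)\rightharpoonup\nabla\log(v)$ weakly in $L^2_{loc}$ (Lemma \ref{lm3}, plus a.e.\ identification of the limit) against the strongly convergent $\tfrac{\ueps}{\ueps+1}\nabla\varphi\to\tfrac{u}{u+1}\nabla\varphi$ in $L^2_{loc}$. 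For $\ueps\nabla\log(\veps)$ one cannot simply multiply two weak limits, so I anticipate the real work here is a strong-convergence improvement for $\nabla\log(\veps)$ — this is exactly what the promised ``Section \ref{sec:nablalogv}'' on $\nabla\log v$ is for — obtained by testing the $\veps$-equation to show $\int_0^T\io|\nabla\log(\veps)|^2\to\int_0^T\io|\nabla\log v|^2$ and hence, with weak convergence, $\nabla\log(\veps)\to\nabla\log v$ strongly in $L^2_{loc}$; combined with $\ueps\to u$ in $L^1_{loc}$ and a uniform-integrability (Vitali) argument using the $L^2$ bound on $\ueps$ and the now-strong $L^2$ convergence of $\nabla\log(\veps)$, this yields $\ueps\nabla\log(\veps)\to u\nabla\log(v)$ in $L^1_{loc}$, and likewise $\tfrac{\ueps}{\ueps+1}\nabla\log(\veps)\cdot\nabla\log(\ueps+1)\rightharpoonup\tfrac{u}{u+1}\nabla\log(v)\cdot\nabla\log(u+1)$ (product of a strongly $L^2$-convergent factor with a weakly $L^2$-convergent one, the bounded factor $\tfrac{\ueps}{\ueps+1}$ converging boundedly a.e.). Once all terms are controlled, letting $j\to\infty$ in \eqref{eq:ssul}, \eqref{eq:slol}, \eqref{second:weaksol} for $(\uj,\vj)$ yields the corresponding relations for $(u,v)$ for every admissible test function, which is precisely the assertion that $(u,v)$ is a generalized solution in the sense of Definition \ref{def:lsg}, completing the proof.
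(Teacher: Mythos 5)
Your plan follows the same route as the paper (weak convergences of all the nonlinear expressions as in Lemma \ref{uvschwach}, Fatou for the quadratic term in the subsolution, lower semicontinuity of $\int_0^\infty\io\varphi|\nabla\log(\uj+1)|^2$, and above all the upgrade of $\nabla\log(\vj)\rightharpoonup\nabla\log(v)$ to strong $L^2$ convergence via convergence of norms, which is exactly Lemmata \ref{limsup} and \ref{gradlogv}). But one step, as you state it, would fail: in the supersolution inequality \eqref{eq:slol} you propose to handle $-\mu\int_0^\infty\io\frac{\ueps^2}{\ueps+1}\varphi$ ``by Fatou again''. Fatou gives $\liminf_{j}\int_0^\infty\io\frac{\uj^2}{\uj+1}\varphi\ge\int_0^\infty\io\frac{u^2}{u+1}\varphi$, hence $\limsup_{j}\bigl(-\mu\int_0^\infty\io\frac{\uj^2}{\uj+1}\varphi\bigr)\le-\mu\int_0^\infty\io\frac{u^2}{u+1}\varphi$, which is the wrong direction here: to pass from the $\eps$-level identity to the inequality ``$\ge$'' in \eqref{eq:slol} you need the limit of the right-hand side to dominate the right-hand side evaluated at $(u,v)$, i.e.\ $\limsup_j\int_0^\infty\io\frac{\uj^2}{\uj+1}\varphi\le\int_0^\infty\io\frac{u^2}{u+1}\varphi$. (Fatou is correct for $-\mu\int_0^\infty\io\varphi\,\uj^2$ in the subsolution \eqref{eq:ssul}, where the inequality points the other way; that part of your plan matches Lemma \ref{sul}.) The repair is cheap and already in your toolbox: $0\le\frac{\uj^2}{\uj+1}\le\uj$ is bounded in $L^2(\Om\times(0,T))$ and converges a.e., so this term converges \emph{exactly} — by weak $L^2$ convergence, which is precisely assertion \eqref{10} of Lemma \ref{uvschwach}, or by Vitali — and no one-sided argument is needed.

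Apart from this sign issue the proposal is essentially the paper's proof: regularity and a.e.\ positivity of $v$ via the $L^2$ bound on $\log\veps$ (Remark \ref{bem:vpos}), the second equation by strong--weak pairing, the chemotaxis terms by pairing the strongly convergent $\nabla\log(\vj)$ with weakly convergent (or boundedly a.e.\ convergent) $u$-expressions. Be aware, though, that your one-line justification of the norm convergence, ``testing the $\veps$-equation'', conceals the technical heart of the argument: the paper must also test the \emph{limit} equation for $v$ with $\frac1{v+\eta}$, which requires $v\in W^{1,2}((0,T);L^2(\Om))$, the chain rule for Sobolev functions (Lemma \ref{kettenregel}) and the approximation identity for $\int_0^T\io\frac{v_tv}{(v+\eta)^2}$ (Lemma \ref{retterindernot}), before letting $\eta\searrow0$ and comparing with the $\eps$-level identity; in a full write-up this is where most of the work lies.
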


By the usual weak compactness arguments and reflexivity of  $L^2(\Omega\times(0,T))$ and $L^2(\Omega)$, the estimates from Section \ref{sec:absch} entail weak convergence of certain terms. This is summarized in the following lemma: 

\begin{lemma}\label{uvschwach} Let $T>0$. 
Under the assumptions of Theorem \ref{thm:main}, the sequences $\uj$ and $\vj$ satisfy:
 \begin{align}
   \label{1}\uj&\rightharpoonup u&&\quad\text{in }L^2(\Omega\times(0,T)),\\
   \label{2}\frac{\uj}{1+\eps_j \uj} &\rightharpoonup u&&\quad\text{in }L^2(\Omega\times(0,T)),\\
   \label{3}\nabla \vj(\cdot,t) &\rightharpoonup \nabla v(\cdot,t)&&\quad\text{in }L^2(\Omega)\text{ for almost all } t\in (0,T),\\
   \label{3.5}\nabla \log(\vj)&\rightharpoonup \nabla \log(v)&&\quad\text{in }L^2(\Omega\times(0,T)),\\
   \label{t} v_{\eps_{j}t}&\rightharpoonup v_t &&\quad\text{in }L^2(\Omega\times(0,T)),\\
    \label{4}\frac{\uj}{(1+\eps_j \uj)(1+\eps_j \vj)}&\rightharpoonup u&&\quad\text{in }L^2(\Omega\times(0,T)),\\
    \label{5}\log(\uj+1) &\rightharpoonup \log(u+1)&&\quad\text{in }L^2(\Omega\times(0,T)),\\
    \label{6}\nabla \log(\uj+1)&\rightharpoonup \nabla \log(u+1)&&\quad\text{in }L^2(\Omega\times(0,T)),\\
    \label{7}\frac{\uj}{(\uj+1)(1+\eps_j\uj)}&\rightharpoonup \frac{u}{u+1}&&\quad\text{in }L^2(\Omega\times(0,T)),\\
    \label{8}\frac{\uj}{(\uj+1)(1+\eps_j\uj)}\nabla \log(\uj+1) &\rightharpoonup \frac{u}{u+1}\nabla \log(u+1)&&\quad\text{in }L^2(\Omega\times(0,T)),\\
    \label{9}\frac{\uj}{\uj+1}&\rightharpoonup \frac{u}{u+1} &&\quad\text{in }L^2(\Omega\times(0,T))\\
    \label{10}\text{and}\quad\frac{\uj^2}{\uj+1}&\rightharpoonup \frac{u^2}{u+1} &&\quad\text{in }L^2(\Omega\times(0,T))
  \end{align}
 as $j\to∞$.
\end{lemma}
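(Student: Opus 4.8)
The plan is to prove each of the ten convergence assertions in Lemma~\ref{uvschwach} by combining the $\eps$-independent a~priori bounds from Section~\ref{sec:absch} with standard weak-compactness arguments and the pointwise convergences $\uj\to u$, $\vj\to v$ already established in Lemmata~\ref{aubinu} and~\ref{aubinv}. The guiding principle throughout is the elementary fact that if a sequence is bounded in $L^2(\Omega\times(0,T))$ and converges almost everywhere (or in measure) to some limit, then it converges weakly in $L^2$ to that same limit; this follows because bounded $L^2$-sequences have weakly convergent subsequences by reflexivity, and the a.e.\ limit must coincide with any weak limit (e.g.\ by Vitali's theorem, the a.e.\ limit is also a weak $L^1$-limit, and weak limits in the stronger topology are consistent). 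Passing to a single subsequence via the usual diagonal extraction across finitely many statements is harmless since there are only ten of them.

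Concretely, I would proceed assertion by assertion. For \eqref{1} and \eqref{5}, boundedness in $L^2(\Omega\times(0,T))$ comes from Lemma~\ref{lm2} (directly for $\uj$, and via $0\le\log(\uj+1)\le\uj$ for the logarithm), and a.e.\ convergence to $u$ resp.\ $\log(u+1)$ is immediate from Lemma~\ref{aubinu} and continuity of $s\mapsto\log(s+1)$. For \eqref{2}, \eqref{4}, \eqref{7}, \eqref{9}: each integrand is pointwise dominated by $\uj$ (using $\frac{\uj}{1+\eps_j\uj}\le\uj$, $\frac{1}{1+\eps_j\vj}\le1$, $\frac{\uj}{\uj+1}\le1$, $\frac{1}{1+\eps_j\uj}\le1$), hence bounded in $L^2$ by Lemma~\ref{lm2}; and since $\eps_j\to0$ and $\uj\to u$, $\vj\to v$ a.e., each sequence converges a.e.\ to the stated limit ($u$, $u$, $\frac{u}{u+1}$, $\frac{u}{u+1}$ respectively), so the weak $L^2$-convergence follows. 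For \eqref{10}: $\frac{\uj^2}{\uj+1}\le\uj$ gives the $L^2$-bound again via Lemma~\ref{lm2}, and a.e.\ convergence to $\frac{u^2}{u+1}$ is clear. For \eqref{3.5} and \eqref{6}: $L^2$-boundedness over $\Omega\times(0,T)$ is exactly the content of Lemmata~\ref{lm3} and~\ref{lm4}; here I do not have a.e.\ convergence of the gradients for free, so I would instead argue that the weak $L^2$-limit is the distributional gradient of the a.e.\ limit of $\log(\vj)$ resp.\ $\log(\uj+1)$ --- that is, $\log(\vj)\to\log(v)$ and $\log(\uj+1)\to\log(u+1)$ a.e.\ (and in $L^1_{loc}$, using Lemma~\ref{lm10}/Lemma~\ref{lm2} for equi-integrability), so testing $\nabla\log(\vj)$ against $\varphi\in C_0^\infty(\Omega\times(0,T))$ and integrating by parts identifies the weak limit as $\nabla\log(v)$; the analogous computation handles \eqref{6}. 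For \eqref{3}: Lemma~\ref{lm7} bounds $\nabla\vj(\cdot,t)$ in $L^2(\Omega)$ uniformly in $t$ and $\eps$; combined with $\vj(\cdot,t)\to v(\cdot,t)$ in $L^2(\Omega)$ for a.e.\ $t$ (Lemma~\ref{aubinv}), the same integration-by-parts identification gives $\nabla\vj(\cdot,t)\rightharpoonup\nabla v(\cdot,t)$ in $L^2(\Omega)$ for a.e.\ $t$ (possibly after a further subsequence, which is still fine). Assertion \eqref{t} is a direct consequence of the $L^2(\Omega\times(0,T))$-bound on $v_{\eps t}$ from Corollary~\ref{kor9} together with \eqref{3} or with $\vj\to v$: the weak limit of $v_{\eps_j t}$ must be $v_t$ in the distributional sense and hence in $L^2$.

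The one assertion needing a genuinely extra ingredient is \eqref{8}, the weak convergence of the product $\frac{\uj}{(\uj+1)(1+\eps_j\uj)}\nabla\log(\uj+1)$. Here I have a bounded factor $\frac{\uj}{(\uj+1)(1+\eps_j\uj)}\le1$ that converges a.e.\ to $\frac{u}{u+1}$ (a function bounded by $1$), multiplied by $\nabla\log(\uj+1)$ which is only weakly convergent in $L^2$ by \eqref{6}; the product of a weakly convergent sequence with a merely a.e.-convergent bounded sequence need not converge weakly to the product of the limits in general, so I must be slightly careful. The clean way is to write $\frac{\uj}{(\uj+1)(1+\eps_j\uj)}\nabla\log(\uj+1)=\frac{u}{u+1}\nabla\log(\uj+1)+\bigl(\frac{\uj}{(\uj+1)(1+\eps_j\uj)}-\frac{u}{u+1}\bigr)\nabla\log(\uj+1)$: the first term converges weakly in $L^2(\Omega\times(0,T))$ to $\frac{u}{u+1}\nabla\log(u+1)$ by \eqref{6} since $\frac{u}{u+1}\in L^\infty$ (multiplication by a fixed $L^\infty$ function is weakly continuous on $L^2$); the second term tends to $0$ strongly in, say, $L^1(\Omega\times(0,T))$ --- indeed the coefficient in brackets is bounded by $2$ and tends to $0$ a.e., hence by dominated convergence against the fixed weak-$L^2$ (thus $L^2$-bounded, after extracting) sequence $\nabla\log(\uj+1)$ one uses $\|(\cdot)\nabla\log(\uj+1)\|_{L^1}\le\|(\cdot)\|_{L^2}\|\nabla\log(\uj+1)\|_{L^2}$ with the first norm $\to0$ by dominated convergence and the second bounded by $\sqrt{C_4}$. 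Since a sequence that converges weakly in $L^2$ along one piece and strongly in $L^1$ to $0$ along the complementary piece converges weakly in $L^1$ to the sum, and the $L^2$-bound (again from $\le1$ times the $L^2$-bound of $\nabla\log(\uj+1)$) upgrades weak $L^1$ to weak $L^2$-convergence, \eqref{8} follows. This product-passage is the main obstacle; everything else is a routine bookkeeping of which earlier lemma supplies which uniform bound, together with the ``bounded plus a.e.\ implies weak'' principle.
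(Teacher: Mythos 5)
Your proposal is correct and follows essentially the same route as the paper: the $\eps$-uniform $L^2$ bounds of Section \ref{sec:absch} together with the a.e.\ convergences from Lemmata \ref{aubinu} and \ref{aubinv} identify all the weak limits, and your treatment of the product term \eqref{8} is just a more explicit version of the bounded-a.e.-convergent-factor argument the paper states tersely. Two cosmetic remarks: in \eqref{3} no further subsequence is needed (and a $t$-dependent one would not be permissible), since at each good $t$ every subsequence of $\nabla v_{\eps_j}(\cdot,t)$ has a sub-subsequence converging weakly to $\nabla v(\cdot,t)$ — exactly the contradiction argument the paper uses to get full-sequence convergence — and the equi-integrability you invoke for $\log(\vj)$ is supplied by the space-time $L^2$ bound of Lemma \ref{lm11} rather than by Lemma \ref{lm10}.
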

\begin{proof}
 According to Lemma \ref{lm2}, $\left\{\ueps\right\}_{\eps\in(0,1)}$ is bounded in $L^2(\Omega\times(0,T))$. \\
 Due to the obvious estimates 
 \begin{align*}
  0&\leq\frac{\ueps}{1+\eps\ueps}\leq\ueps,\quad  
  0&\leq\frac{\ueps}{(1+\eps\ueps)(1+\eps\veps)}\leq \ueps,\quad
  0&\leq\log(\ueps+1)\leq\ueps \quad\text{and }\quad 
  0&\leq\frac{\ueps^2}{\ueps+1}\leq \ueps,
 \end{align*}
 from reflexivity of $L^2(\Omega\times(0,T))$ we conclude the existence of convergent subsequences of the corresponding terms. By Lemma \ref{aubinu}, $\uj\to u$ in $L^1(\Omega\times(0,T))$ and almost everywhere. Since pointwise and weak limit have to coincide if both exist, assertions \eqref{1}, \eqref{2}, \eqref{4}, \eqref{5}, \eqref{10} follow immediately. 
 Analogously, we obtain \eqref{3.5} from Lemma \ref{lm3} and \eqref{t} from Corollary \ref{kor9}, each in conjunction with Lemma \ref{aubinv}.\\

 Suppose, $t\in (0,T)$ were such that \eqref{3} did not hold at $t$. Then we could find a subsequence $\eps_{j_k}$, some $\delta>0$ and some $\varphi\in L^2(\Omega)$
 satisfying 
 \begin{align*}
  \left\lvert \io \nabla v_{\eps_{j_k}}(\cdot,t)\cdot\varphi-\io \nabla v(\cdot,t)\cdot\varphi\right\rvert>\delta
 \end{align*}
 for all $k\in\N$. Due to Lemma \ref{lm7}, however, the sequence $\left(\nabla v_{\eps_{j_k}}(\cdot,t)\right)_{k\in ℕ}$ would have to include an $L^2(\Omega)$-weakly convergent subsequence. For almost all $t\in (0,T)$, Lemma \ref{aubinv} excludes a limit different from $\nabla v(\cdot,t)$.
 Hence, \eqref{3} holds, 
 even without resorting to another subsequence. 
 Assertion (\ref{6}) results from Lemma \ref{lm4} and Lemma \ref{aubinu}, as does (\ref{8}), because 
 \begin{align*}
  \frac{\ueps}{(1+\eps\ueps)(\ueps+1)}\leq 1, 
 \end{align*}
which also shows that $\left\{\frac{\ueps}{(1+\eps\ueps)(\ueps+1)}\right\}_{\eps\in(0,1)}$ is bounded in $L^2(\Omega\times(0,T))$, proving \eqref{7}. Analogously, we obtain \eqref{9} in light of the trivial estimate 
 \(
  \frac{\ueps}{\ueps+1}\leq 1.
 \)
\end{proof}

\subsection{Strong $L^2$-convergence of $\nabla \log(\vj)$}\label{sec:nablalogv}
Assertion \eqref{3.5} can be sharpened in the following sense, which will be decisive for the final proof that the limit object $(u,v)$ is a generalized solution: In the proof of Lemma \ref{sol}, one of the integrals we will have to take to the limit $ε\searrow 0$ will contain the product of the terms in \eqref{3.5} and \eqref{8}. The idea underlying this approach is adapted from \cite{siam}, where in a similar way the $L^2(\Om\times(0,T))$-convergence of $\nabla \vj$ (corresponding to the non-singular sensitivity function in the system considered there) is proven.

\begin{lemma}\label{gradlogv}
 Let $T>0$. Under the assumptions of Theorem \ref{thm:main}, we have $\nabla\log(\vj)\rightarrow\nabla \log(v)$ in $L^2(\Omega\times(0,T))$ as $j\to \infty$.
\end{lemma}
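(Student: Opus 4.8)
The plan is to upgrade the weak convergence \eqref{3.5} to strong $L^2(\Om\times(0,T))$-convergence by establishing convergence of the norms, exploiting the structure of the equation for $w_\eps=-\log(\veps/\norm{\infty}{v_0})$ already introduced in the proof of Lemma \ref{lm3}. Recall from there that $\nabla w_\eps=-\nabla\log\veps$ and $w_{\eps t}=\Delta w_\eps-|\nabla w_\eps|^2+\frac{\ueps}{(1+\eps\ueps)(1+\eps\veps)}$. Integrating this identity over $\Om\times(0,T)$ gives
\begin{align*}
 \int_0^T\!\!\io |\nabla\log(\veps)|^2=\int_0^T\!\!\io |\nabla w_\eps|^2
 =\io w_\eps(\cdot,0)-\io w_\eps(\cdot,T)+\int_0^T\!\!\io \frac{\ueps}{(1+\eps\ueps)(1+\eps\veps)},
\end{align*}
using $\int_0^T\io\Delta w_\eps=0$ thanks to the Neumann condition. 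The analogous (formal, but rigorous by the regularity in Definition \ref{def:sul}) identity for the limit reads $\int_0^T\io|\nabla\log v|^2=\io w(\cdot,0)-\io w(\cdot,T)+\int_0^T\io u$, where $w:=-\log(v/\norm{\infty}{v_0})$; one should note $w(\cdot,0)=-\log(v_0/\norm{\infty}{v_0})$ is the same for all $\eps$, so those boundary terms match exactly.

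The key steps, in order, are as follows. First I would pass to the limit in the right-hand side: the term $\int_0^T\io\frac{\uj}{(1+\eps_j\uj)(1+\eps_j\vj)}\to\int_0^T\io u$ follows from \eqref{4} (or directly from $L^1$-convergence in Lemma \ref{aubinu} together with the domination $\le\uj$ and Vitali's theorem), and the terminal term $\io w_{\eps_j}(\cdot,T)$ must be handled: from $v_{\eps_j}(\cdot,t)\to v(\cdot,t)$ in $L^2(\Om)$ for a.e.\ $t$ (Lemma \ref{aubinv}) and a.e.\ pointwise convergence, together with the $L^1$-bound on $\log\veps$ from Lemma \ref{lm10} and the elementary inequality $|\log s|\le 2s-\log s$, one obtains $\io w_{\eps_j}(\cdot,T)\to\io w(\cdot,T)$ for a.e.\ $T>0$ by Vitali's theorem (the integrands are equi-integrable since $w_\eps\ge 0$ and $\io w_\eps\le C_{10}+$const, and $\veps\le\norm{\infty}{v_0}$ bounds them above via $w_\eps\le 2\veps/\norm{\infty}{v_0}-\log(\veps/\norm\infty{v_0})$). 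Hence, at least along the subsequence and for a.e.\ $T$, $\int_0^T\io|\nabla\log\vj|^2\to\int_0^T\io|\nabla\log v|^2$. Second, combining this convergence of norms with the weak convergence \eqref{3.5} in the Hilbert space $L^2(\Om\times(0,T))$ yields strong convergence $\nabla\log\vj\to\nabla\log v$ in $L^2(\Om\times(0,T))$ for a.e.\ $T>0$; since the $L^2(\Om\times(0,T))$-norm of the difference is monotone in $T$, the exceptional set of $T$'s can be removed and the claim holds for every $T>0$.

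The main obstacle I expect is justifying the limit identity $\int_0^T\io|\nabla\log v|^2=\io w(\cdot,0)-\io w(\cdot,T)+\int_0^T\io u$ for the limit object $v$, i.e.\ showing that $v$ satisfies the logarithmic energy identity with equality (not merely an inequality). One clean way around this is to avoid the limit identity altogether and argue by lower semicontinuity in one direction and the above convergence of the $\eps_j$-quantities in the other: weak lower semicontinuity of the $L^2$-norm gives $\int_0^T\io|\nabla\log v|^2\le\liminf\int_0^T\io|\nabla\log\vj|^2$, while the explicit formula for $\int_0^T\io|\nabla\log\vj|^2$ in terms of boundary and source terms, all of which converge (upwards controlled), furnishes the matching upper bound $\limsup\int_0^T\io|\nabla\log\vj|^2\le\io w(\cdot,0)-\liminf\io w_{\eps_j}(\cdot,T)+\int_0^T\io u\le\int_0^T\io|\nabla\log v|^2$, where the last inequality uses Fatou/lower semicontinuity to pass from $\io w_{\eps_j}(\cdot,T)$ to $\io w(\cdot,T)$ from below and the (separately established, e.g.\ via \eqref{second:weaksol} tested suitably or via the supersolution structure for $-\log v$) inequality $\int_0^T\io|\nabla\log v|^2\ge\io w(\cdot,0)-\io w(\cdot,T)+\int_0^T\io u$. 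Assembling these, $\int_0^T\io|\nabla\log\vj|^2\to\int_0^T\io|\nabla\log v|^2$, and strong convergence follows as above.
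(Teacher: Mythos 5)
Your overall strategy coincides with the paper's: obtain convergence of the $L^2$-norms of $\nabla\log(\vj)$ and combine it with the weak convergence \eqref{3.5} in the Hilbert space $L^2(\Om\times(0,T))$ to upgrade to strong convergence. The $\eps$-level identity $\int_0^T\io|\nabla\log(\vj)|^2=\io w_{\eps_j}(\cdot,0)-\io w_{\eps_j}(\cdot,T)+\int_0^T\io\frac{\uj}{(1+\eps_j\uj)(1+\eps_j\vj)}$ and the passage to the limit in its source term are unproblematic. The genuine gap is the step you yourself flag and then defer: the inequality $\int_0^T\io|\nabla\log(v)|^2\ge\io\log(v(\cdot,T))-\io\log(v_0)+\int_0^T\io u$ for the limit object $v$. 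This is not ``separately established'' anywhere, and it cannot be obtained by ``testing \eqref{second:weaksol} suitably'' in any quick way: \eqref{second:weaksol} admits only smooth test functions and contains no terminal-time term, whereas the function you need to test with is $\frac1{v+\eta}$, a mere Sobolev function of $v$; nor does the generalized-solution framework provide any ``supersolution structure for $-\log v$'' to invoke -- that structure is precisely what has to be proved at this point. Making this rigorous is the bulk of the paper's argument (Lemma \ref{limsup}): one first passes to the limit in the $\eps$-level weak formulation so as to allow test functions in $W^{1,2}(\Om\times(0,T))$ including a terminal term, then inserts $\varphi_\eta=\frac1{v+\eta}$ via the Sobolev chain rule (Lemma \ref{kettenregel}, using $v\in W^{1,2}(\Om\times(0,T))$), treats $\int_0^T\io\frac{v_tv}{(v+\eta)^2}$ by the chain-rule-in-time approximation argument of Lemma \ref{retterindernot} (resting on Lemma \ref{sobolevspaces}), and finally sends $\eta\searrow0$ using Beppo Levi, \eqref{eq:wasloglemma}, Lemma \ref{log} and Remark \ref{bem:vpos}. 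Neither your identity version nor your ``$\ge$''-version is softer than this; both variants of your proof hinge on it, so as written the proof is incomplete.

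A smaller inaccuracy: in your first variant, the claimed equi-integrability of $w_{\eps_j}(\cdot,T)$ does not follow from the uniform $L^1$-bound of Lemma \ref{lm10} (an $L^1$-bound is not uniform integrability; the upper bound $w_\eps\le 2\veps/\norm{\infty}{v_0}-\log(\veps/\norm{\infty}{v_0})$ you quote still contains the uncontrolled logarithm), so Vitali's theorem is not justified there. Your Fatou-based fallback avoids this, but it yields only the one-sided estimate $\liminf_j\io w_{\eps_j}(\cdot,T)\ge\io w(\cdot,T)$ and therefore leans even more heavily on the missing ``$\ge$''-inequality for $v$; the paper instead gets two-sided control by proving the exact identity for $v$ and handling the terminal term through the compactness statement of Lemma \ref{log} (Aubin--Lions for $\log\veps$), which your proposal does not replace.
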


The proof will be based on the idea that for any sequence $(x_n)_{n\in ℕ}$ in a Hilbert space, $x_n\rightharpoonup x$ already implies $x_n\to x$, if at the same time $\Norm{}{x_n}\to \Norm{}{x}$. It will be given at the end of this subsection.

\begin{lemma}\label{log}
 Let $T>0$. Under the assumptions of Theorem \ref{thm:main}, we have $\log (\vj) \rightarrow \log(v)$ in $L^2(\Omega\times(0,T))$ as $j\to\infty$, in particular, there is a subsequence such that $\io \log(\vj(\cdot,t))\rightarrow\io \log(v(\cdot,t))$ as $j\to \infty$ for almost all $t\in(0,T)$.
\end{lemma}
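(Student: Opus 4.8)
\textbf{Proof plan for Lemma \ref{log}.}
The plan is to upgrade the weak convergence $\log(\vj)\rightharpoonup \log(v)$ in $L^2(\Omega\times(0,T))$ coming from Lemma \ref{uvschwach} (or rather its argument) to strong convergence via the Hilbert-space principle: if $x_n\rightharpoonup x$ and $\Norm{}{x_n}\to\Norm{}{x}$, then $x_n\to x$. First I would record that $\log(\vj)\to\log(v)$ pointwise almost everywhere in $\Omega\times(0,T)$: this follows from Lemma \ref{aubinv}, since $\vj\to v$ a.e. and $s\mapsto\log s$ is continuous on $(0,\infty)$, together with the fact that $v>0$ a.e. (which itself follows because $\veps>0$ and, more usefully, because $\nabla\log(\vj)$ is bounded in $L^2$ while $\log(\veps)$ is bounded in $L^1$ by Lemma \ref{lm10}, so the a.e. limit cannot be $-\infty$ on a set of positive measure). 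Next I would establish that $\{\log(\vj)\}_j$ is bounded in $L^2(\Omega\times(0,T))$; combined with the a.e. convergence this both identifies the weak limit as $\log(v)$ and puts us in the Hilbert space $L^2(\Omega\times(0,T))$.

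The boundedness in $L^2(\Omega\times(0,T))$ is the technical heart of the argument, and it is exactly what Lemma \ref{lm11} provides: $\Norm{L^2((0,T);W^{1,2}(\Omega))}{\log(\veps)}\le C_{11}$ uniformly in $\eps$, so in particular $\int_0^T\io|\log(\vj)|^2\le C_{11}^2$. Hence, along a subsequence, $\log(\vj)\rightharpoonup \ell$ weakly in $L^2(\Omega\times(0,T))$ for some $\ell$, and since the a.e. limit is $\log(v)$ we get $\ell=\log(v)$; as the limit is independent of the subsequence, the whole sequence converges weakly. It remains to show $\Norm{L^2(\Omega\times(0,T))}{\log(\vj)}\to\Norm{L^2(\Omega\times(0,T))}{\log(v)}$. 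For this I would combine the a.e. convergence with the uniform $L^2$ bound via a standard argument: either invoke a Vitali-type / Brezis--Lieb argument, or, most directly, observe that $|\log(\vj)|^2\to|\log(v)|^2$ a.e. and that the sequence $|\log(\vj)|^2$ is bounded in $L^{1+\delta}$ (since $\log(\vj)$ is bounded in $L^{2(1+\delta)}$ — which one can get, if needed, by interpolating the $L^2((0,T);W^{1,2})$ bound with, e.g., a parabolic Sobolev embedding, or simply by noting $W^{1,2}(\Omega)\hookrightarrow L^{2(1+\delta)}(\Omega)$ for small $\delta>0$ when $n\le 2$ and using a suitable exponent otherwise), hence uniformly integrable, so Vitali's theorem yields $|\log(\vj)|^2\to|\log(v)|^2$ in $L^1(\Omega\times(0,T))$ and in particular convergence of norms.

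With $\log(\vj)\rightharpoonup\log(v)$ and $\Norm{}{\log(\vj)}\to\Norm{}{\log(v)}$ in the Hilbert space $L^2(\Omega\times(0,T))$, the principle quoted above gives $\log(\vj)\to\log(v)$ strongly in $L^2(\Omega\times(0,T))$, which is the first assertion. The ``in particular'' statement then follows by a routine extraction: $L^2(\Omega\times(0,T))$-convergence implies $\int_0^T\Norm{L^2(\Omega)}{\log(\vj(\cdot,t))-\log(v(\cdot,t))}^2\diff t\to 0$, so along a subsequence $\log(\vj(\cdot,t))\to\log(v(\cdot,t))$ in $L^2(\Omega)$ for a.e. $t\in(0,T)$, whence $\io\log(\vj(\cdot,t))\to\io\log(v(\cdot,t))$ for a.e. $t$. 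The main obstacle I anticipate is making the upgrade from weak to strong convergence airtight, i.e. justifying the convergence of the $L^2$-norms; the cleanest route really is the uniform integrability of $|\log(\vj)|^2$ plus Vitali, and the uniform integrability is where one has to be slightly careful to extract a gain of integrability beyond the bare $L^2$ bound — but Lemma \ref{lm11}'s control in $L^2((0,T);W^{1,2}(\Omega))$, together with Sobolev embedding, comfortably supplies it.
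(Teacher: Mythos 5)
Your plan is viable, but it takes a genuinely different route from the paper, and one of your two suggested justifications for the key step is flawed. The paper proves this lemma in one stroke via the Aubin--Lions lemma: the uniform bound on $\log(\veps)$ in $L^2((0,T);W^{1,2}(\Omega))$ (Lemma \ref{lm11}) together with the bound on $(\log(\veps))_t$ in $L^1((0,T);(W_0^{2,\infty}(\Omega))^*)$ (Lemma \ref{lm_logvt}) and the compact embedding $W^{1,2}(\Omega)\hookrightarrow L^2(\Omega)$ yield relative compactness of $\{\log(\veps)\}$ in $L^2(\Omega\times(0,T))$, after which the a.e.\ convergence of $\vj$ identifies the limit. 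You instead avoid the time-derivative estimate altogether and try to upgrade the a.e.\ convergence $\log(\vj)\to\log(v)$ (your Fatou argument for $v>0$ a.e.\ via Lemma \ref{lm10} is fine and avoids circularity with Remark \ref{bem:vpos}) to strong $L^2$ convergence by Vitali; the weak-convergence-plus-norms detour is then redundant, since Vitali already gives strong convergence.

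The gap is in the uniform integrability of $|\log(\vj)|^2$. Your ``simplest'' option -- $W^{1,2}(\Omega)\hookrightarrow L^{2(1+\delta)}(\Omega)$ -- only converts Lemma \ref{lm11} into a bound in $L^2\bigl((0,T);L^{2(1+\delta)}(\Omega)\bigr)$, not into a space--time $L^{2(1+\delta)}$ bound, and a bound of this mixed type does \emph{not} imply uniform integrability of $|\log(\vj)|^2$ over $\Omega\times(0,T)$: the time profile $t\mapsto \Norm{L^{2(1+\delta)}(\Omega)}{\log(\vj(\cdot,t))}^2$ is only controlled in $L^1(0,T)$ and could a priori concentrate on shrinking time intervals. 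To make your argument airtight you must produce a genuine space--time bound in some $L^r(\Omega\times(0,T))$ with $r>2$, which is possible, but only by bringing in the time-uniform bound $\esssup_{t}\norm{1}{\log(\veps(\cdot,t))}\le C_{10}$ of Lemma \ref{lm10}: Gagliardo--Nirenberg interpolation between the $L^\infty((0,T);L^1(\Omega))$ bound and the $L^2$ gradient bound of Lemma \ref{lm3} gives, e.g., a uniform bound in $L^{2+2/n}(\Omega\times(0,T))$, after which Vitali applies and your proof closes (the extraction of a subsequence with $\io\log(\vj(\cdot,t))\to\io\log(v(\cdot,t))$ for a.e.\ $t$ is then routine, as you say). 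So either carry out this interpolation explicitly, or fall back on the paper's shorter Aubin--Lions argument using Lemma \ref{lm_logvt}.
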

\begin{proof}
 Since $ W^{1,2}(\Omega)\hookrightarrow L^2(\Omega)$ is a compact embedding and $L^2(\Omega) \hookrightarrow (W_0^{2,\infty}(\Omega))^*$ a continuous one, Lemma \ref{lm11} and Lemma \ref{lm_logvt} together with the Aubin--Lions lemma (\cite[Cor. 8.4]{Simon1986}) show that  $\left\{\log(\veps)\right\}_{\eps\in(0,1)}$ is relatively compact in $L^2((0,T);L^2(\Omega))$, hence there is a subsequence with the desired properties.
\end{proof}

\begin{bemerkung}\label{bem:vpos}
 Because $\log(v)\in L^2(\Omega\times(0,T))$, the function $\log(v)$ is finite almost everywhere in $\Omega\times(0,T)$ and hence $v$ is positive almost everywhere. 
\end{bemerkung}

Before we return to dealing with $\log \ve$, let us prepare some more general, technical arguments, on which the proof will rely. These have, for example, not been employed in \cite{siam}.

\begin{lemma}\label{sobolevspaces}
By $W^{1,2}((0,T);L^2(\Om))$ we denote the Sobolev space of square-integrable $L^2(\Om)$-valued functions $u$ on $(0,T)$, whose weak derivative $u_t$ belongs to $L^2((0,T);(L^2(\Om))^*)\cong L^2(\Om\times(0,T))$. We recall that $C^1([0,T];L^2(\Om))$ is dense in $W^{1,2}((0,T);L^2(\Om))$ \cite[23.10 b]{zeidler} and, since this is proven by convolution arguments, it can be seen easily that nonnegative functions in $W^{1,2}((0,T);L^2(\Om))$ can be approximated by a sequence of nonnegative functions in $C^1([0,T];L^2(\Om))$. Moreover, the embedding 
\[
 W^{1,2}((0,T);L^2(\Om)) \hookrightarrow C^0([0,T];L^2(\Om))
\]
is continuous, \cite[Proposition 23.23]{zeidler}. In particular, for every $f\in W^{1,2}((0,T);L^2(\Om))$ there is a function $\tilde{f}\in C^0([0,T];L^2(\Om))$ agreeing with $f$ almost everywhere; and every pointwise evaluation $f(t)$ for some $t\in[0,T]$ is to be understood as $\tilde{f}(t)$.  
\end{lemma}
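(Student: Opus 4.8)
The plan is to separate the statement into the parts that are quoted verbatim from \cite{zeidler} and the single part that genuinely needs an argument. The definition of $W^{1,2}((0,T);L^2(\Om))$, the density of $C^1([0,T];L^2(\Om))$ in it, and the continuous embedding into $C^0([0,T];L^2(\Om))$ are recorded from \cite[23.10 b]{zeidler} and \cite[Proposition 23.23]{zeidler}, and the closing assertion on the existence of a continuous representative $\tilde f$ is an immediate consequence of that embedding. Thus the only thing left to prove is that \emph{nonnegative} elements of $W^{1,2}((0,T);L^2(\Om))$ are approximated, in the norm of that space, by \emph{nonnegative} elements of $C^1([0,T];L^2(\Om))$.

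To this end I would revisit the convolution proof of the density statement and check that it respects the sign. Given a nonnegative $f\in W^{1,2}((0,T);L^2(\Om))$, first extend it by even reflection across $t=0$ and $t=T$ to a function $\bar f\in W^{1,2}((-T,2T);L^2(\Om))$: the continuous representative makes $\bar f$ continuous across the reflection points, its weak derivative merely acquires a sign change and stays in $L^2$, and reflection clearly preserves pointwise nonnegativity. Then, for a standard nonnegative mollifier $\rho_h\in C_0^\infty(\R)$ with $\supp\rho_h\subset(-h,h)$ and $0<h<T$, put $f_h:=(\rho_h*\bar f)\big|_{[0,T]}$. Since $\rho_h\ge 0$ and $\bar f(s)\ge 0$ in $L^2(\Om)$ for a.e.\ $s$, the identity $f_h(t)=\int_\R\rho_h(t-s)\bar f(s)\diff s$ shows $f_h(t)\ge 0$ in $L^2(\Om)$ for every $t\in[0,T]$; moreover $f_h\in C^\infty([0,T];L^2(\Om))\subset C^1([0,T];L^2(\Om))$, and, using $\partial_t(\rho_h*\bar f)=\rho_h*\partial_t\bar f$ together with the classical convergence of mollifications in $L^2$, one obtains $f_h\to f$ in $W^{1,2}((0,T);L^2(\Om))$ as $h\searrow 0$.

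I do not anticipate any real obstacle: this is a bookkeeping lemma assembling standard facts, and the only point requiring attention is that the approximation scheme underlying the density statement --- mollification in time of a nonnegatively extended function --- produces nonnegative approximants, which the displayed identity makes transparent. The lemma is isolated here only because these facts are invoked repeatedly afterwards (notably in the proof of Lemma~\ref{sol} and in the passage to the limit in the supersolution inequality), where it will be convenient to test with functions from $C^1([0,T];L^2(\Om))$ rather than only with functions in $C_0^\infty(\Ombar\times[0,\infty))$.
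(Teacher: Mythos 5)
Your proposal is correct and follows essentially the same route as the paper, which offers no proof beyond the citations to \cite[23.10 b]{zeidler} and \cite[Proposition 23.23]{zeidler} and the remark that the convolution-based density argument visibly preserves nonnegativity. Your explicit reflection-plus-mollification construction simply spells out the detail the paper leaves implicit, and it is sound.
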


\begin{lemma}\label{retterindernot}
 Let $\Om\subset ℝ^n$ be a bounded, smooth domain, $T>0$. 
 Let $f\in W^{1,2}((0,T);L^2(\Omega))$ be nonnegative, $\eta>0$. Then
 \begin{align*}
  \int_0^T\!\!\io \frac{f_t f}{(f+\eta)^2}=\io \frac{\eta}{f(\cdot,T)+\eta}+\io\log(f(\cdot,T)+\eta)-\io \frac{\eta}{f(\cdot,0)+\eta}-\io \log(f(\cdot,0)+\eta).
 \end{align*}
\end{lemma}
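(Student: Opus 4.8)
The plan is to recognize the integrand as a perfect time-derivative and then justify the resulting integration by parts using only the regularity $f\in W^{1,2}((0,T);L^2(\Om))$ guaranteed by Lemma \ref{sobolevspaces}. Indeed, the key algebraic observation is that for fixed $\eta>0$ and any value $s\ge 0$, writing $g(s):=\frac{\eta}{s+\eta}+\log(s+\eta)$, one has $g'(s)=-\frac{\eta}{(s+\eta)^2}+\frac{1}{s+\eta}=\frac{s}{(s+\eta)^2}$. Hence, at least formally, $\frac{f_t f}{(f+\eta)^2}=\frac{\diff}{\diff t}\,g(f)$, and integrating in time and space yields exactly the claimed identity $\int_0^T\io \frac{f_t f}{(f+\eta)^2}=\io g(f(\cdot,T))-\io g(f(\cdot,0))$. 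So the whole content of the lemma is turning this chain-rule manipulation into a rigorous statement for Sobolev-regular (not classically differentiable) $f$.

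First I would handle the smooth case: if $f\in C^1([0,T];L^2(\Om))$ is nonnegative, then $t\mapsto g(f(\cdot,t))$ is $C^1$ as an $L^1(\Om)$-valued map (note $g$ is smooth on $[0,\infty)$ with bounded derivative $0\le g'(s)=\frac{s}{(s+\eta)^2}\le \frac{1}{4\eta}$, so composition with $f$ preserves the needed regularity, and $\io$ commutes with $\dt$ by dominated convergence), and the fundamental theorem of calculus gives the identity directly. Then I would pass to general $f\in W^{1,2}((0,T);L^2(\Om))$ by the density statement in Lemma \ref{sobolevspaces}: pick nonnegative $f_m\in C^1([0,T];L^2(\Om))$ with $f_m\to f$ in $W^{1,2}((0,T);L^2(\Om))$, hence also $f_m(\cdot,0)\to f(\cdot,0)$ and $f_m(\cdot,T)\to f(\cdot,T)$ in $L^2(\Om)$ by the continuous embedding into $C^0([0,T];L^2(\Om))$, and (along a subsequence) $f_m\to f$, $\partial_t f_m\to \partial_t f$ a.e. in $\Om\times(0,T)$ and $f_m(\cdot,T)\to f(\cdot,T)$, $f_m(\cdot,0)\to f(\cdot,0)$ a.e. in $\Om$. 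On the left-hand side, $\frac{f_m\,\partial_t f_m}{(f_m+\eta)^2}\to \frac{f\,\partial_t f}{(f+\eta)^2}$ a.e., and it is dominated by $\frac{1}{2\eta}|\partial_t f_m|$ using $\frac{s}{(s+\eta)^2}\le\frac{1}{4\eta}$ together with, say, $\frac{a b}{(a+\eta)^2}\le\frac{b}{2\eta}$ for $a,b\ge 0$ — more cleanly, $\left|\frac{f_m \partial_t f_m}{(f_m+\eta)^2}\right|\le\frac{1}{4\eta}|\partial_t f_m|$, and since $\partial_t f_m\to\partial_t f$ in $L^2(\Om\times(0,T))\subset L^1(\Om\times(0,T))$ one may invoke the generalized dominated convergence theorem (Vitali, or the $L^1$-convergent-majorant version of dominated convergence) to pass to the limit in the space-time integral. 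On the right-hand side, $g$ is continuous and, crucially, has \emph{linear} growth: $0\le g(s)-\log\eta\le \frac{s}{\eta}+ \frac{s}{\eta}$ for $s\ge 0$ (since $\log(s+\eta)-\log\eta=\log(1+s/\eta)\le s/\eta$ and $0\le \frac{\eta}{s+\eta}\le 1$, and even more simply $|g(s)|\le C(\eta)(1+s)$), so that $g(f_m(\cdot,T))\to g(f(\cdot,T))$ in $L^1(\Om)$ follows from $L^2(\Om)$- (hence $L^1(\Om)$-) convergence of $f_m(\cdot,T)$ by the same generalized dominated convergence argument, and likewise for the terms at $t=0$; the two constant-in-space contributions $\io\frac{\eta}{f(\cdot,T)+\eta}$ etc. are obviously finite.

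The main obstacle I anticipate is a mild technical one rather than a conceptual one: ensuring that every pointwise evaluation $f(\cdot,0)$, $f(\cdot,T)$ is the \emph{good} representative $\tilde f$ from the continuous embedding (this is exactly why Lemma \ref{sobolevspaces} is stated), and being careful that the density approximation can be taken with \emph{nonnegative} $f_m$ — which Lemma \ref{sobolevspaces} explicitly provides — so that the elementary inequalities for $g$ and for the integrand, all of which require the argument to be $\ge 0$, remain valid along the approximating sequence. A secondary point is the passage to the limit under the integral sign: since we only have $W^{1,2}$-convergence (so $L^2$, not $L^\infty$, control of $\partial_t f_m$), a naive dominated convergence with a fixed majorant is unavailable, and one should instead use that an $L^1$-convergent sequence of majorants suffices (equivalently, Vitali's theorem via uniform integrability of $\{\partial_t f_m\}$, which holds because the sequence converges in $L^1$). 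Once these points are observed, the identity follows cleanly, and it will be applied later with $f=\veps$ (or related quantities) and suitable $\eta$.
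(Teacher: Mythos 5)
Your proposal is correct and follows essentially the same route as the paper: approximate $f$ by nonnegative $C^1([0,T];L^2(\Omega))$ functions via Lemma \ref{sobolevspaces}, verify the identity for the approximants (the paper does this by Fubini and substitution, which is the same chain-rule/FTC computation with $g(s)=\tfrac{\eta}{s+\eta}+\log(s+\eta)$), and pass to the limit using the bounds $\tfrac{s}{(s+\eta)^2}\le C(\eta)$, $\tfrac{\eta}{s+\eta}\le 1$ and a linear-growth bound on the logarithm together with a generalized dominated convergence argument. The only blemish is the auxiliary inequality $g(s)-\log\eta\le \tfrac{2s}{\eta}$, which fails at $s=0$, but your immediately following bound $|g(s)|\le C(\eta)(1+s)$ is what is actually needed, so the argument stands.
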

\begin{proof}
 According to Lemma \ref{sobolevspaces} there is a nonnegative sequence 
 \begin{align*}
 (\varphi_n)_{n\in\N}\subseteq C^{1}([0,T];L^2(\Omega))
 \end{align*} with
 \begin{align*}
  \varphi_n\to f \quad\text{in }W^{1,2}((0,T);L^2(\Omega)) \quad\text{as }n\to \infty.
 \end{align*}
 For every $n\in\N$ by Fubini's theorem and substitution
 \begin{align}
  \int_0^T\!\!\io \frac{\varphi_{n t} \varphi_n}{(\varphi_n+\eta)^2}=&\io\!\int_0^T\frac{\varphi_{n t} \varphi_n}{(\varphi_n+\eta)^2}=\io\!\int_{\varphi_n(\cdot,0)}^{\varphi_n(\cdot,T)} \frac{s}{(s+\eta)^2}\diff s\nn\\
  =&\io \frac{\eta}{\varphi_n(\cdot,T)+\eta}+\io\log(\varphi_n(\cdot,T)+\eta)
  -\io \frac{\eta}{\varphi_n(\cdot,0)+\eta}-\io \log(\varphi_n(\cdot,0)+\eta).\label{eq:10B}
 \end{align}
 Since $\varphi_n\to f$ in $W^{1,2}((0,T);L^2(\Omega))$ as $n\to\infty$, in particular we have $\varphi_{n t}\to f_t$ in $L^2(\Omega\times(0,T))$. 
 Due to 
 \begin{align*}
  \left\lvert \frac{\varphi_{n t} \varphi_n}{(\varphi_n+\eta)^2}\right\rvert\leq \left\lvert \varphi_{n t}\right\rvert\cdot \frac{\varphi_n}{\varphi_n+\eta}\cdot\tel{\varphi_n+\eta}\leq \frac{\left\lvert \varphi_{n t}\right\rvert}{\eta} \qquad \text{ and }\qquad
  \frac{\eta}{\varphi_n+\eta}\leq 1,
 \end{align*}
 from a version of Lebesgue's theorem it follows that 
 \begin{align}\label{eq:10A}
  \lim_{n\to \infty}\int_0^T\!\!\io \frac{\varphi_{n t} \varphi_n}{(\varphi_n+\eta)^2}=\int_0^T\!\!\io \frac{f_t f}{(f+\eta)^2}
 \end{align}
 and 
 \begin{align*}
  \lim_{n\to\infty}\left(\io \frac{\eta}{\varphi_n(\cdot,T)+\eta}-\io \frac{\eta}{\varphi_n(\cdot,0)+\eta}\right)=\io \frac{\eta}{f(\cdot,T)+\eta} -\io \frac{\eta}{f(\cdot,0)+\eta},
 \end{align*}
 because $\varphi_n(\cdot,T)\to f(\cdot,T)$ and $\varphi_n(\cdot,0)\to f(\cdot,0)$ in $L^2(\Omega)$ for $n\to \infty$, since according to Lemma \ref{sobolevspaces} the space $W^{1,2}((0,T);L^2(\Omega))$ is continuously embedded into $C([0,T];L^2(\Omega))$.\\
 For any $t\in[0,T]$, $n\in \N$, by 
\begin{equation}\label{eq:wasloglemma}
 |\log s|\leq 2s-\log s\qquad \text{for all } s>0
\end{equation}
 we have 
 \begin{align*}
  \left\lvert \log(\varphi_n(\cdot,t)+\eta)\right\rvert \leq 2\varphi_n(\cdot,t)+2\eta-\log(\varphi_n(\cdot,t)+\eta)
  \leq 2\varphi_n(\cdot,t)+2\eta-\log(\eta).
 \end{align*}
Thus, another application of Lebesgue's theorem shows that  \begin{align}
  \lim_{n\to\infty}\left(\io\log(\varphi_n(\cdot,T)+\eta)-\io \log(\varphi_n(\cdot,0)+\eta)\right)
  =\io\log(f(\cdot,T)+\eta)-\io \log(f(\cdot,0)+\eta),\label{eq:10C}
 \end{align}
and \eqref{eq:10A}, \eqref{eq:10B} and \eqref{eq:10C} taken together imply 
 \begin{equation*}
  \int_0^T\!\!\io \frac{f_t f}{(f+\eta)^2}=\io \frac{\eta}{f(\cdot,T)+\eta}+\io\log(f(\cdot,T)+\eta) 
 -\io \frac{\eta}{f(\cdot,0)+\eta}-\io \log(f(\cdot,0)+\eta).\qedhere
 \end{equation*}
\end{proof}

Additionally, we will use the following chain rule for Sobolev functions, which we recall briefly:

\begin{lemma}\label{kettenregel}
 Let $\Omega\subsetℝ^n$ be a domain, let $f\colon\R\to\R$ be Lipschitz continuous and let $y\in W^{1,p}(\Omega)$ for some $p\geq1$. If $f\circ y\in L^p(\Omega)$, then $f\circ y\in W^{1,p}(\Omega)$ and for almost every $x\in\Omega$:
 \begin{align*}
  \nabla (f\circ y)(x)=f'(y(x))\nabla y(x).
 \end{align*}
\end{lemma}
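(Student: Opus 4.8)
The plan is to reduce the assertion to the classical chain rule for $f\in C^1$ by mollifying $f$, the only subtlety being to control the merely almost-everywhere defined derivative $f'$ along the Sobolev map $y$.

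\emph{The $C^1$ case.} First I would prove the statement for $f\in C^1(\R)$ with $L:=\sup_\R|f'|<\infty$, assuming also $p<\infty$ for now. Since the claim is local, fix a ball $B\Subset\Om$; as $y\in W^{1,p}(B)$, the Meyers--Serrin theorem yields $y_k\in C^\infty(B)$ with $y_k\to y$ in $W^{1,p}(B)$ and, along a subsequence, pointwise a.e. For each $k$ the classical chain rule gives $\int_B (f\circ y_k)\,\partial_i\varphi=-\int_B f'(y_k)\,\partial_i y_k\,\varphi$ for every $\varphi\in C_0^\infty(B)$, and I would pass to the limit $k\to\infty$: here $f\circ y_k\to f\circ y$ in $L^p(B)$ because $|f\circ y_k-f\circ y|\le L|y_k-y|$, while $f'(y_k)\,\partial_i y_k\to f'(y)\,\partial_i y$ in $L^p(B)$ after writing the difference as $f'(y_k)(\partial_i y_k-\partial_i y)+(f'(y_k)-f'(y))\,\partial_i y$ and estimating the first summand in $L^p$ by $L\|\partial_i y_k-\partial_i y\|_{L^p(B)}\to0$ and the second by dominated convergence (continuity of $f'$ and $y_k\to y$ a.e.). This identifies $f'(y)\,\partial_i y$ as the weak derivative $\partial_i(f\circ y)$; together with the hypothesis $f\circ y\in L^p(\Om)$ and $|f'(y)\na y|\le L|\na y|\in L^p(\Om)$ it follows that $f\circ y\in W^{1,p}(\Om)$.

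\emph{Mollifying $f$.} For a general Lipschitz $f$ with constant $L$ I would put $f_\delta:=f*\rho_\delta\in C^\infty(\R)$, so that $\|f_\delta'\|_\infty\le L$, $f_\delta\to f$ locally uniformly and $f_\delta'=f'*\rho_\delta\to f'$ in $L^1_{\text{loc}}(\R)$ and a.e.\ as $\delta\to0$ (with $f'$ defined a.e.\ by Rademacher's theorem); denote by $N\subset\R$ the Lebesgue-null set off which the last convergence holds. Applying the $C^1$ case to each $f_\delta$ and letting $\delta\to0$ in $\int_\Om (f_\delta\circ y)\,\partial_i\varphi=-\int_\Om f_\delta'(y)\,\partial_i y\,\varphi$, the left-hand side converges because $f_\delta\circ y\to f\circ y$ in $L^1_{\text{loc}}(\Om)$. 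For the right-hand side I would argue that $f_\delta'(y)\,\partial_i y\to f'(y)\,\partial_i y$ almost everywhere: on $\{\na y=0\}$ both sides vanish, and on $\{\na y\ne0\}$ I invoke the standard fact (a consequence of absolute continuity of Sobolev functions on lines) that $\na y=0$ a.e.\ on $y^{-1}(N)$, so $y(x)\notin N$ --- hence $f_\delta'(y(x))\to f'(y(x))$ --- for a.e.\ $x$ with $\na y(x)\ne0$. Dominated convergence with majorant $L|\na y|\in L^p(\Om)$ then passes the right-hand side to the limit, giving $\partial_i(f\circ y)=f'(y)\,\partial_i y$, and as before $f\circ y\in L^p(\Om)$ and $|f'(y)\na y|\le L|\na y|\in L^p(\Om)$ upgrade this to $f\circ y\in W^{1,p}(\Om)$. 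For $p=\infty$ I would carry out both steps in $W^{1,2}_{\text{loc}}$ and note at the end that $f\circ y\in L^\infty(\Om)$ together with $|f'(y)\na y|\le L\|\na y\|_{L^\infty(\Om)}$ forces $f\circ y\in W^{1,\infty}(\Om)$.

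The hard part --- indeed the only non-routine ingredient --- is the limit in the gradient term in the mollified identity: since $f'$ is only defined a.e., one must rule out that $y$ concentrates positive measure (with non-zero gradient) on the exceptional set $N$, and this is precisely the assertion that a Sobolev function has vanishing gradient a.e.\ on the preimage of a Lebesgue-null set. Granting that, everything else is approximation combined with dominated convergence.
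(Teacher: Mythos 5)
Your argument is correct, but note that the paper does not actually prove this lemma: its proof consists of the single citation to Ziemer, Theorem 2.1.11, and what you have written is essentially a self-contained reconstruction of the standard proof behind that citation. The structure (smooth case via Meyers--Serrin approximation and dominated convergence; general Lipschitz $f$ by mollification, using $\|f_\delta'\|_\infty\le L$, $f_\delta\to f$ uniformly because $|f_\delta-f|\le L\delta$, and $f_\delta'\to f'$ at every Lebesgue point of $f'$) is exactly the classical one, and you correctly isolate the only non-routine ingredient: that $\na y=0$ a.e.\ on $y^{-1}(N)$ for every Lebesgue null set $N\subset\R$, so that $f'(y(x))$ is defined for a.e.\ $x$ with $\na y(x)\ne0$ and $f_\delta'(y)\,\partial_i y\to f'(y)\,\partial_i y$ a.e.\ (with the usual convention that the right-hand side is $0$ on $\{\na y=0\}$, which is also how the conclusion of the lemma must be read). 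Two points should be made explicit. First, that auxiliary fact has to be obtained without circularity: some textbook derivations of it invoke precisely the Lipschitz chain rule, but your route via absolute continuity on almost every line is sound, since for an absolutely continuous function $g$ of one variable the identity $\int_E|g'|\,dt=\int_\R \#\,(E\cap g^{-1}(s))\,ds$ shows $g'=0$ a.e.\ on $g^{-1}(N)$, and Fubini over lines parallel to the coordinate axes transfers this to each $\partial_i y$. Second, in the mollified identity you only need the local distributional chain rule for $f_\delta$ (no global integrability of $f_\delta\circ y$), which your $C^1$ step does provide locally; the hypothesis $f\circ y\in L^p(\Om)$ is then used only for the final membership $f\circ y\in W^{1,p}(\Om)$, exactly as you arrange it. With these remarks your proof is complete and coincides with the approach of the reference the paper cites.
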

\begin{proof}
 See \cite[Theorem 2.1.11]{ziemer}.
\end{proof}

All of these preparations will now be taken to their use in the proof of the following lemma:

\begin{lemma}\label{limsup}
 Under the assumptions of Theorem \ref{thm:main}, for almost all $T>0$
 \begin{align*}
  \lim_{\eps_j\searrow0} \int_0^T\!\!\io |\nabla \log(\vj)|^2= \int_0^T\!\!\io |\nabla\log(v)|^2.
  \end{align*}
\end{lemma}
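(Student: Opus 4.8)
The plan is to test the $v_{\eps}$-equation against $-\log(\veps)$ (or rather against a smooth variant thereof) to produce an energy-type identity for $\int_0^T\io|\na\log(\veps)|^2$, pass to the limit in every term using the convergences already at our disposal, and thereby identify the limit with the corresponding quantity for $v$. More precisely, I would first dualize the PDE: from the second line of \eqref{syseps} one formally computes
\[
 \dt\io(-\log(\veps))=-\io|\na\log(\veps)|^2+\io\frac{\ueps}{(1+\eps\ueps)(1+\eps\veps)},
\]
so that, integrating over $(0,T)$,
\[
 \int_0^T\!\!\io|\na\log(\veps)|^2=\io\log(\veps(\cdot,T))-\io\log(v_0)+\int_0^T\!\!\io\frac{\ueps}{(1+\eps\ueps)(1+\eps\veps)}.
\]
This is exactly the identity obtained earlier in the proof of Lemma~\ref{lm10}, now used as an \emph{equality} rather than an inequality. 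Justifying this rigorously for the (only $C^{2,1}$, not a.e.\ twice time-differentiable) classical solution $\veps$ is routine because $\veps$ is smooth for $t>0$ and $\log$ is smooth away from $0$; alternatively one invokes Lemma~\ref{retterindernot} with $f=\veps$, $\eta\to 0$, noting $\veps$ is bounded below on $(0,T)$ for fixed $\eps$.

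The core of the argument is then passing $\eps_j\searrow 0$ in the three terms on the right. The middle term is the easiest: $\log v_0$ does not depend on $\eps$. The last term: since $0\le\frac{\uj}{(1+\eps_j\uj)(1+\eps_j\vj)}\le\uj$ and this expression converges to $u$ a.e.\ (by Lemma~\ref{aubinu}) and weakly in $L^2(\Om\times(0,T))$ (assertion \eqref{4}), it converges \emph{strongly} in $L^1(\Om\times(0,T))$ by Vitali's theorem — domination against $\uj$, which converges in $L^1$ and hence is uniformly integrable — so $\int_0^T\io\frac{\uj}{(1+\eps_j\uj)(1+\eps_j\vj)}\to\int_0^T\io u$. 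The genuinely delicate term is $\io\log(\vj(\cdot,T))$: here I would invoke Lemma~\ref{log}, which gives, along a further subsequence, $\io\log(\vj(\cdot,t))\to\io\log(v(\cdot,t))$ for a.e.\ $t\in(0,T)$; this is precisely why the statement is only claimed for \emph{almost all} $T>0$. So along that subsequence, $\int_0^T\io|\na\log(\vj)|^2\to\io\log(v(\cdot,T))-\io\log(v_0)+\int_0^T\io u$.

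Finally I must recognize the right-hand side as $\int_0^T\io|\na\log v|^2$. For this I run the identity backwards for the limit function: $v$ satisfies the weak formulation \eqref{second:weaksol}, and $\na\log v\in L^2_{loc}$ (from Remark~\ref{bem:vpos} together with \eqref{3.5}), so testing \eqref{second:weaksol} with a test function approximating $-\tfrac{\psi}{v}$, or more cleanly applying Lemma~\ref{retterindernot} to $f=v$ and letting $\eta\downarrow 0$, yields
\[
 \int_0^T\!\!\io|\na\log v|^2=\io\log(v(\cdot,T))-\io\log(v_0)+\int_0^T\!\!\io u\,v\cdot\tfrac1v=\io\log(v(\cdot,T))-\io\log(v_0)+\int_0^T\!\!\io u,
\]
which matches exactly. (Here one uses that $v\in W^{1,2}((0,T);L^2(\Om))$, available from Lemma~\ref{aubinv} and \eqref{t}.) Combining the two identities proves the claim. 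I expect the main obstacle to be the rigorous justification of the $\eta\downarrow 0$ limit in Lemma~\ref{retterindernot} applied to the \emph{limit} $v$ — one needs $\log v\in L^1$ (Remark~\ref{bem:vpos}), $\tfrac{\eta}{v+\eta}\le 1$ with $\tfrac{\eta}{v+\eta}\to 0$ a.e., and monotone/dominated convergence for $\log(v+\eta)\to\log v$ using \eqref{eq:wasloglemma} — plus the bookkeeping of keeping $T$ outside the a.e.-exceptional sets produced by the successive subsequence extractions.
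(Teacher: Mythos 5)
Your proposal is correct and follows essentially the same route as the paper: the $\eps$-level identity $\int_0^T\io|\na\log\veps|^2=\io\log(\veps(\cdot,T))-\io\log v_0+\int_0^T\io\frac{\ueps}{(1+\eps\ueps)(1+\eps\veps)}$ (the paper gets it by inserting $\varphi=1/\veps$ into the weak identity \eqref{eq:phieps}), the analogous identity for the limit $v$ obtained via the test functions $1/(v+\eta)$, Lemma \ref{kettenregel}, Lemma \ref{retterindernot} and the $\eta\searrow0$ limit, and convergence of the right-hand sides for a.e.\ $T$ via Lemma \ref{log} and \eqref{4}. The only cosmetic deviations are the order of the two halves, your use of Vitali instead of testing the weak convergence \eqref{4} against the constant function, and that the paper derives the needed weak formulation for $v$ with terminal term (\eqref{eq:phi}, valid for all $W^{1,2}(\Om\times(0,T))$ test functions) by passing to the limit in \eqref{eq:phieps} rather than by approximating within \eqref{second:weaksol}.
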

\begin{proof}
 Testing the equation for $v_{\eps t}$  in \eqref{syseps} by $\varphi$, for every $\eps>0$ by integration by parts we obtain 
 \begin{align}\label{eq:phieps}
  -\int_0^T\!\!\io \varphi_t\veps+\io \varphi(\cdot,T)\veps(\cdot,T)-\io \varphi(\cdot, 0)v_0
  =-\int_0^T\!\!\io \nabla \veps \cdot \nabla\varphi -\int_0^T\!\!\io \frac{\ueps\veps}{(1+\eps\ueps)(1+\eps\veps)}\varphi.
 \end{align}
Here we pass to the limit along the sequence $\eps_j\searrow 0$, employing Lemma \ref{aubinv} on the left hand side of \eqref{eq:phieps} and \eqref{3} or a combination of \eqref{4} and, again, Lemma \ref{aubinv}, respectively, in the integrals on the right. This shows that for every $\varphi\in W^{1,2}(\Omega\times(0,T))$: 
 \begin{align}\label{eq:phi}
  -\int_0^T\!\!\io \varphi_t v +\io v(\cdot,T)\varphi(\cdot,T)-\io v_0\varphi(\cdot,0)=-\int_0^T\!\!\io \nabla v \cdot \nabla \varphi -\int_0^T\!\!\io \varphi uv.
  \end{align}
For every $\eta>0$ we now define 
 \begin{align*}
  \varphi_{\eta}(x,t):=\frac{1}{v(x,t)+\eta}.
 \end{align*}
The map $[0,∞)\ni s\mapsto \tel{s+\eta}$ is Lipschitz continuous and $\varphi_{\eta}\in L^2(\Om\times(0,T))$ due to 
 \begin{align*}
 \frac{1}{v(x,t)+\eta}\leq\tel{\eta},
 \end{align*}
 and we may apply Lemma \ref{kettenregel}, since ${v\in W^{1,2}(\Omega\times(0,T))}$ by Lemma \ref{aubinv} together with (\ref{3}) and (\ref{t}) from Lemma \ref{uvschwach}. \\
 Hence $\varphi_{\eta}\in W^{1,2}(\Omega\times(0,T))$ and can be inserted into (\ref{eq:phi}) in place of $\varphi$. By Lemma  \ref{kettenregel}, $\nabla \varphi_{\eta}\cdot\nabla v=-\frac{\nabla v}{(v+\eta)^2}\cdot\nabla v$ 
 and $\varphi_{\eta t}=-\frac{v_t}{(v+\eta)^2}$ -- and thus (\ref{eq:phi}) with $\varphi_{\eta}$ turns into:
 \begin{align}\label{eq:limsuplemma}
  \int_0^T\!\!\io \frac{|\nabla v|^2}{(v+\eta)^2}=\int_0^T\!\!\io \frac{v_t v}{(v+\eta)^2}+\io \frac{v(\cdot,T)}{v(\cdot,T)+\eta} 
  -\io \frac{v_0}{v_0+\eta}+\int_0^T\!\!\io \frac{uv}{v+\eta}.
 \end{align}
Beppo Levi's theorem shows that 
 \begin{align*}
  \lim_{\eta\searrow 0} \int_0^T\!\!\io \frac{|\nabla v|^2}{(v+\eta)^2}&=\int_0^T\!\!\io \frac{|\nabla v|^2}{v^2}=\int_0^T\!\!\io|\nabla \log(v)|^2,\quad
  \lim_{\eta\searrow 0}\io \frac{v(\cdot,T)}{v(\cdot,T)+\eta}=|\Omega|= \lim_{\eta\searrow 0}\io \frac{v_0}{v_0+\eta},\\
  &\text{ and }\lim_{\eta\searrow 0}\int_0^T\!\!\io \frac{uv}{v+\eta}=\int_0^T\!\!\io u.
 \end{align*}
 Thanks to (\ref{t}) and Lemma \ref{aubinv}, moreover $v\in W^{1,2}((0,T);L^2(\Omega))$. Furthermore, according to Remark \ref{bem:vpos}, $v$ is positive almost everywhere. Lemma \ref{retterindernot} hence implies that 
 \begin{align*}
  \int_0^T\!\!\io \frac{v_t v}{(v+\eta)^2}=&\io \frac{\eta}{v(\cdot,T)+\eta}+\io\log(v(\cdot,T)+\eta)
  -\io \frac{\eta}{v(\cdot,0)+\eta}-\io \log(v(\cdot,0)+\eta).
 \end{align*}
Due to \eqref{eq:wasloglemma}, 
 for every $t\in[0,T]$, $\eta>0$,  
 \begin{align*}
 \left\lvert\frac{\eta}{v(\cdot,t)+\eta}\right\rvert\leq 1 \quad\text{and } \lvert\log(v(\cdot,t)+\eta)\rvert\leq 2v(\cdot,t)+2\eta-\log(v(\cdot,t)).
\end{align*}
 Because $v(\cdot,t)+2\eta-\log(v(\cdot,t))\in L^2(\Omega)$ by Lemma \ref{aubinv} and Lemma \ref{log}, Lebesgue's theorem is applicable.
 As 
 \begin{align*}
  \lim_{\eta\searrow0}\frac{\eta}{v(\cdot,t)+\eta}=0\quad\text{and } \lim_{\eta\searrow0}\log(v(\cdot,t)+\eta)=\log(v(\cdot,t))
 \end{align*}
 for $t=T$ and $t=0$, in passing to the limit $\eta \searrow0$ it implies 
\begin{align*}
 \io \frac{\eta}{v(\cdot,T)+\eta}&+\io\log(v(\cdot,T)+\eta)-\io \frac{\eta}{v(\cdot,0)+\eta}-\io \log(v(\cdot,0)+\eta)
  \rightarrow \io \log(v(\cdot,T))-\io \log(v_0).
\end{align*}
 Hence in (\ref{eq:limsuplemma}) we obtain:
 \begin{align*}
  \int_0^T\!\!\io |\nabla\log(v)|^2= \io \log(v(\cdot,T))-\io\log(v_0)+\int_0^T\!\!\io u.
 \end{align*}
 Furthermore, from (\ref{eq:phieps}) with $\varphi=\tel{\veps}$ for every $\eps>0$ we have 
 \begin{align*}
  \int_0^T\!\!\io |\nabla \log(\veps)|^2=\io \log(\veps(\cdot,T))-\io\log(v_0)+\int_0^T\!\!\io \frac{\ueps}{(1+\eps\ueps)(1+\eps\veps)}.
 \end{align*}
 For almost every $T$ we have $\lim_{\eps_j\searrow 0}\io \log(v_{\eps_j}(\cdot,T))=\io \log(v(\cdot,T))$ by Lemma \ref{log}. As moreover $\lim_{\eps_j\searrow 0}\int_0^T\!\!\io \frac{\uj}{(1+\eps_j\uj)(1+\eps_j\vj)}=\int_0^T\!\!\io u$, in conclusion we arrive at 
 \begin{align*}
  \int_0^T\!\!\io |\nabla\log(v)|^2 =& \io \log(v(\cdot,T))-\io\log(v_0)+\int_0^T\!\!\io u\\
  =&\lim_{\eps_j\searrow0} \io \log(v_{\eps_j}(\cdot,T)) -\io\log(v_0)+\lim_{\eps_j\searrow0} \int_0^T\!\!\io \frac{\uj}{(1+\eps_j\uj)(1+\eps_j\vj)}\\
  =&\lim_{\eps_j\searrow0} \int_0^T\!\!\io |\nabla\log(v_{\eps_j})|^2.\qedhere
 \end{align*}
\end{proof}

Thus, we now can prove strong convergence of $\nabla\log(\vj)$ in $L^2(\Omega\times(0,T))$:
\begin{proof}[Proof of Lemma \ref{gradlogv}]
Due to Lemma \ref{limsup} we have 
 \begin{align*}
  \int_0^T\!\!\io |\nabla\log(v)|^2=\lim_{\eps_j\searrow0} \int_0^T\!\!\io |\nabla \log(\vj)|^2,
  \end{align*}
  that is 
  \begin{align*}
  \Norm{L^2(\Omega\times(0,T))}{\nabla \log(\vj)}\rightarrow \Norm{L^2(\Omega\times(0,T))}{\nabla \log(v)}\quad\text{as }\eps_j\searrow0.
 \end{align*}
As mentioned in the beginning of this subsection, together with the weak convergence asserted by \eqref{3.5} this is sufficient to infer 
 \begin{align*}
  \nabla\log(\vj)\rightarrow\nabla \log(v)\quad\text{in }L^2(\Omega\times(0,T)) \qquad \text{as } \eps_j\searrow 0,
 \end{align*}
which concludes the proof. 
\end{proof}

\subsection{$(u,v)$ is a solution}
After these preparations, we can now show that $(u,v)$ is a generalized solution.

\begin{lemma}\label{lsgv}
 Under the assumptions of Theorem \ref{thm:main}, the pair $(u,v)$ satisfies 
 \begin{align*}
  -\int_0^{\infty}\!\!\io \psi_tv-\io v_0\psi(\cdot,0)=-\int_0^{\infty}\!\!\io \nabla v \cdot \nabla \psi-\int_0^{\infty}\!\!\io \psi uv
 \end{align*}
for every $\psi\in C_0^{\infty}(\overline{\Omega}\times [0,\infty))$.
\end{lemma}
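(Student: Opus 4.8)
The plan is to obtain \eqref{second:weaksol} for the limit pair $(u,v)$ by passing to the limit $\eps_j\searrow 0$ in the already-known weak formulation of the second equation of \eqref{syseps}. First I would write, for fixed $\psi\in C_0^{\infty}(\overline{\Omega}\times[0,\infty))$, the identity obtained by testing the $\veps$-equation against $\psi$: there is some $T>0$ with $\supp\psi\subset\overline{\Omega}\times[0,T)$, so integration by parts in space and time gives
\begin{align*}
 -\int_0^{\infty}\!\!\io \psi_t\veps-\io v_0\psi(\cdot,0)=-\int_0^{\infty}\!\!\io \nabla \veps \cdot \nabla \psi-\int_0^{\infty}\!\!\io \psi \frac{\ueps\veps}{(1+\eps\ueps)(1+\eps\veps)},
\end{align*}
valid for every $\eps>0$ (no boundary terms appear because $\partial_\nu\veps=0$ and $\psi$ is compactly supported in time at $t=T$). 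This is essentially a special case of \eqref{eq:phieps}, and in any case it holds directly since the $\veps$ are classical solutions by Lemma \ref{epsglobal}.

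Next I would take $j\to\infty$ term by term. Since $\psi_t$ and $\psi$ are bounded with compact support, and $\vj\to v$ in $L^2(\Omega\times(0,T))$ and $\nabla\vj\rightharpoonup\nabla v$ in $L^2(\Omega\times(0,T))$ (the latter from Lemma \ref{uvschwach}\eqref{3} after integrating the pointwise-in-$t$ weak convergence against the $L^2$ function $\nabla\psi$, or directly from the $L^{\infty}((0,T);W^{1,2}(\Omega))$-bound of Lemma \ref{lm7}), the first three integrals converge to their counterparts with $v$. For the nonlinear reaction term, the cleanest route is to use the pointwise convergences from Lemma \ref{aubinu} and Lemma \ref{aubinv}: $\uj\to u$ and $\vj\to v$ a.e., hence $\tfrac{\uj\vj}{(1+\eps_j\uj)(1+\eps_j\vj)}\to uv$ a.e.\ on $\Omega\times(0,T)$; combined with the bound $\tfrac{\uj\vj}{(1+\eps_j\uj)(1+\eps_j\vj)}\le \ueps\vj\le\norm{\infty}{v_0}\uj$ (Lemma \ref{lm:veps}) and the $L^2(\Omega\times(0,T))$-boundedness of $\uj$ (Lemma \ref{lm2}), Vitali's (or the generalized dominated) convergence theorem yields $\tfrac{\uj\vj}{(1+\eps_j\uj)(1+\eps_j\vj)}\to uv$ in $L^1(\Omega\times(0,T))$, so $\int_0^\infty\io\psi\,\tfrac{\uj\vj}{(1+\eps_j\uj)(1+\eps_j\vj)}\to\int_0^\infty\io\psi uv$. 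Alternatively one can invoke \eqref{4} from Lemma \ref{uvschwach} together with $\vj\to v$ in $L^2$ to identify the weak limit of the product as $uv$; either way the limit of the reaction term is $\int_0^\infty\io\psi uv$.

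Putting the four limits together gives exactly \eqref{second:weaksol} for $(u,v)$, for the given $\psi$, and since $\psi$ was arbitrary this proves the lemma. The only slightly delicate point is the passage to the limit in the nonlinear term $\tfrac{\uj\vj}{(1+\eps_j\uj)(1+\eps_j\vj)}\psi$; this is not a true obstacle, since the uniform domination by $\norm{\infty}{v_0}\uj$ and the uniform $L^2$-bound on $\uj$ render the generalized dominated convergence theorem (equivalently Vitali's theorem) directly applicable, but it is the one step where one must combine an a.e.\ convergence with an integrability bound rather than just quoting a weak-convergence statement. All the required convergences and bounds have been established in Section \ref{sec:absch} and in Lemmata \ref{aubinu}, \ref{aubinv}, and \ref{uvschwach}.
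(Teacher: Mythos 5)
Your proposal is correct and follows essentially the same route as the paper: it tests the $\veps$-equation with $\psi$, uses compactness of $\supp\psi$ in time, and passes to the limit term by term via Lemmata \ref{aubinu}, \ref{aubinv} and \ref{uvschwach}. The only (harmless) deviation is in the reaction term, where the paper pairs the weak $L^2$-convergence \eqref{4} with the strong convergence $\psi\vj\to\psi v$, whereas your primary argument uses a.e.\ convergence together with the domination by $\norm{\infty}{v_0}\uj$ and the uniform $L^2$-bound via Vitali's theorem -- both are valid, and you note the paper's pairing as your alternative.
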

\begin{proof}
 Let $\psi\in C_0^{\infty}(\overline{\Omega}\times [0,\infty))$. Then for every $\eps>0$,  
 \begin{align*}
    -\int_0^{\infty}\!\!\io \psi_t\veps-\io v_0\psi(\cdot,0)=-\int_0^{\infty}\!\!\io \nabla \veps\cdot\nabla \psi-\int_0^{\infty}\!\!\io \psi \frac{\ueps\veps}{(1+\eps\ueps)(1+\eps\veps)}.
 \end{align*}
 Due to $\psi$ being compactly supported, there is $T>0$ such that $\psi(\cdot,t)=0$ for all $t>T$. Due to Lemma \ref{aubinv}, 
 \begin{align*}
  -\int_0^{\infty}\!\!\io \psi_t\vj = -\int_0^{T}\io \psi_t\vj\rightarrow -\int_0^{T}\io \psi_tv= -\int_0^{\infty}\!\!\io \psi_tv
 \end{align*}
 converges as $\eps_j\searrow0$. 
 Assertion (\ref{3}) of Lemma \ref{uvschwach} implies 
 \begin{align*}
  -&\int_0^{\infty}\!\!\io \nabla \vj\nabla \psi \rightarrow -\int_0^{\infty}\!\!\io \nabla v \cdot \nabla \psi \qquad \text{as } \eps_j\searrow 0.
 \end{align*}
 Lemma \ref{aubinv} and (\ref{4}) entail that moreover 
 \begin{align*}
  -\int_0^{\infty}\!\!\io \psi \frac{\uj\vj}{(1+\eps_j\uj)(1+\eps_j\vj)}\rightarrow -\int_0^{\infty}\!\!\io \psi uv \qquad \text{as } \eps_j\searrow 0.
 \end{align*}
Therefore, we obtain 
 \begin{align*}
  -\int_0^{\infty}\!\!\io \psi_tv-\io v_0\psi(\cdot,0)&\leftarrow -\int_0^{\infty}\!\!\io \psi_t\vj-\io v_0\psi(\cdot,0)\\
  &=-\int_0^{\infty}\!\!\io \nabla \vj\nabla \psi-\int_0^{\infty}\!\!\io \psi \frac{\uj\vj}{(1+\eps_j\uj)(1+\eps_j\vj)}\\
  &\rightarrow -\int_0^{\infty}\!\!\io \nabla v \cdot \nabla \psi-\int_0^{\infty}\!\!\io \psi uv\quad\text{as }\eps_j\searrow0
 \end{align*}
 and hence the claim, due to uniqueness of the limit. 
\end{proof}

The solution property concerning the second equation of \eqref{sys} is hence satisfied. We still have to deal with the first equation. 

\begin{lemma}\label{sul}
 Under the assumptions of Theorem \ref{thm:main}, the pair $(u,v)$ is a very weak subsolution to system (\ref{sys}).
\end{lemma}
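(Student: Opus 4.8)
The plan is to pass to the limit $\eps_j\searrow 0$ in the weak subsolution-type inequality obtained from testing the first equation of \eqref{syseps}. First I would note that since every classical solution $(\uj,\vj)$ of \eqref{syseps} satisfies the first equation pointwise, testing by a nonnegative $\varphi\in C_0^\infty(\Ombar\times[0,\infty))$ with $\partial_\nu\varphi=0$ and integrating by parts yields the \emph{identity}
\begin{align*}
 -\int_0^\infty\!\!\io\varphi_t\uj-\io u_0\varphi(\cdot,0)
 =&\int_0^\infty\!\!\io\uj\Delta\varphi+\chi\int_0^\infty\!\!\io\frac{\uj}{1+\eps_j\uj}\nabla\log(\vj)\cdot\nabla\varphi\\
 &+\kappa\int_0^\infty\!\!\io\uj\varphi-\mu\int_0^\infty\!\!\io\varphi\uj^2,
\end{align*}
valid for all $\eps_j$; here the chemotaxis term is rewritten using $\nabla\cdot\big(\tfrac{\uj}{(1+\eps_j\uj)\vj}\nabla\vj\big)$ and $\nabla\log\vj=\tfrac{\nabla\vj}{\vj}$. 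Since $\varphi$ has compact support in time, all integrals are effectively over $\Om\times(0,T)$ for some fixed $T>0$.

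Next I would take the limit term by term. The linear terms $\int\uj\Delta\varphi$, $\int\uj\varphi$ converge by \eqref{1} (or simply by $\uj\to u$ in $L^1(\Om\times(0,T))$ from Lemma \ref{aubinu}), and $\int u_0\varphi(\cdot,0)$ and $\int\varphi_t\uj$ are handled likewise. For the quadratic term $\mu\int\varphi\uj^2$: since $\uj\to u$ a.e.\ and $\{\uj^2\}$ is bounded in $L^1(\Om\times(0,T))$ by Lemma \ref{lm2}, Fatou's lemma gives $\liminf_j\int\varphi\uj^2\ge\int\varphi u^2$, and it is precisely at this term that the equality degenerates to an inequality --- this is why the definition of very weak subsolution uses ``$\le$''. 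For the chemotaxis term, I would combine strong convergence $\nabla\log(\vj)\to\nabla\log(v)$ in $L^2(\Om\times(0,T))$ from Lemma \ref{gradlogv} with weak convergence $\tfrac{\uj}{1+\eps_j\uj}\rightharpoonup u$ in $L^2(\Om\times(0,T))$ from \eqref{2}; the product of a weakly and a strongly convergent sequence in $L^2$ converges weakly in $L^1$, so $\int\frac{\uj}{1+\eps_j\uj}\nabla\log(\vj)\cdot\nabla\varphi\to\int u\nabla\log(v)\cdot\nabla\varphi$ (the bounded factor $\nabla\varphi$ causing no trouble). Putting these together and dropping $-\mu\int\varphi\uj^2$ against its $\liminf$ yields \eqref{eq:ssul}.

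Finally, the second equation: \eqref{second:weaksol} for $(u,v)$ has already been established in Lemma \ref{lsgv}, and the regularity requirements in Definition \ref{def:sul} --- namely $u\in L^2_{\mathrm{loc}}([0,\infty);L^2(\Om))$, $v\in L^2_{\mathrm{loc}}([0,\infty);W^{1,2}(\Om))$, $\nabla\log v\in L^2_{\mathrm{loc}}(\Om\times[0,\infty))$, nonnegativity of $u$ and positivity of $v$ --- follow respectively from Lemma \ref{lm2}/\eqref{1}, from Lemma \ref{lm7} together with Lemma \ref{aubinv}, from Lemma \ref{lm3}/\eqref{3.5}, and from a.e.\ convergence of $\uj\ge 0$ and Remark \ref{bem:vpos}. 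I expect the only genuinely delicate point to be the passage to the limit in the chemotaxis term: one must be careful that it is the \emph{strong} $L^2$-convergence of $\nabla\log(\vj)$ (the content of Subsection \ref{sec:nablalogv}) that rescues the argument, since neither factor converges strongly on its own and weak-times-weak would not suffice; everything else is a routine assembly of the a priori bounds from Section \ref{sec:absch} with Fatou's lemma for the superquadratic absorption term.
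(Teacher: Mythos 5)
Your proposal is correct and follows essentially the same route as the paper: test the first equation of \eqref{syseps}, pass to the limit using \eqref{1} for the linear terms, the combination of \eqref{2} with the strong convergence from Lemma \ref{gradlogv} for the chemotaxis term, Fatou's lemma for $-\mu\int\varphi\uj^2$ (which is exactly where the identity degenerates to the inequality \eqref{eq:ssul}), and Lemma \ref{lsgv} together with the a priori bounds for the second equation and the required regularity. The only superfluous detail is invoking the $L^1$-bound on $\uj^2$ for Fatou, where nonnegativity and a.e.\ convergence already suffice, as in the paper.
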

\begin{proof}
  As $\ueps\geq 0$ for every $\eps>0$ and $\uj\rightarrow u$ converges almost everywhere as $ε_j\searrow 0$, $u$ is nonnegative. According to Remark \ref{bem:vpos}, $v$ is positive almost everywhere. 
  From (\ref{1}) in Lemma \ref{uvschwach} we obtain \mbox{$u\in L^2_{\text{loc}}([0,\infty);L^2(\Omega))$}.  
  Lemma \ref{lm:veps} and assertion (\ref{3}) together yield $v\in L^2_{\text{loc}}([0,\infty);W^{1,2}(\Omega))$, and Lemma \ref{gradlogv} implies that \mbox{$\nabla\log(v)\in L^2_{\text{loc}}(\Ombar\times[0,\infty))$}. \\
  We now let $\varphi\in C_0^{\infty}(\overline{\Omega}\times [0,\infty))$ be nonnegative with $\partial_\nu \varphi=0$ on  $\partial\Omega$. Then for every  $\eps>0$ we have 
  \begin{align*}
  -\int_0^{\infty}\!\!\io \varphi_t\ueps-\io u_0\varphi(\cdot,0)=& \int_0^{\infty}\!\!\io \uepsΔ\varphi+\chi\int_0^{\infty}\!\!\io \frac{\ueps}{1+\eps\ueps}\nabla\varphi\cdot\nabla\log(\veps)\\
  &+\kappa\int_0^{\infty}\!\!\io \ueps\varphi-\mu\int_0^{\infty}\!\!\io \varphi \ueps^2.
 \end{align*}
 Because of (\ref{1}), the following integrals converge:
 \begin{align*}
  -\int_0^{\infty}\!\!\io \varphi_t\uj&\rightarrow -\int_0^{\infty}\!\!\io \varphi_t u,\\
  \int_0^{\infty}\!\!\io \ujΔ\varphi&\rightarrow \int_0^{\infty}\!\!\io uΔ\varphi\quad\text{and}\\
  \kappa\int_0^{\infty}\!\!\io \uj\varphi&\rightarrow \kappa\int_0^{\infty}\!\!\io u\varphi\quad\text{as }\eps_j\searrow0.
 \end{align*}
 In combination with (\ref{2}), Lemma \ref{gradlogv} moreover implies that 
 \begin{align*}
  \chi\int_0^{\infty}\!\!\io \frac{\uj}{1+\eps_j\uj}\nabla\varphi\cdot\nabla\log(\vj) \rightarrow \chi\int_0^{\infty}\!\!\io u\nabla\varphi\cdot\nabla\log(v) \quad\text{as }\eps_j\searrow0
 \end{align*}
holds. Since according to Lemma \ref{aubinu} $\varphi\uj^2\rightarrow \varphi u^2$ converges a.e., Fatou's lemma due to the nonnegativity of  $\varphi \uj^2$ shows that 
 \begin{align*}
  \int_0^{\infty}\!\!\io \varphi u^2\leq \liminf_{\eps_j\searrow0} \int_0^{\infty}\!\!\io \varphi \uj^2,
 \end{align*}
 and hence 
 \begin{align*}
    -\mu\int_0^{\infty}\!\!\io \varphi u^2\geq -\liminf_{\eps_j\searrow0} \mu \int_0^{\infty}\!\!\io \varphi \uj^2.
 \end{align*}
 In conclusion, we obtain
 \begin{align*}
  -\int_0^{\infty}\!\!\io \varphi_tu-\io u_0\varphi(\cdot,0)=&\lim_{\eps_j\searrow 0} \left(-\int_0^{\infty}\!\!\io \varphi_t\uj\right)-\io u_0\varphi(\cdot,0)\\
  =&\lim_{\eps_j\searrow 0} \bigg(\int_0^{\infty}\!\!\io \ujΔ\varphi+\chi\int_0^{\infty}\!\!\io \frac{\uj}{1+\eps_j\uj}\nabla\varphi\cdot\nabla\log(\vj)\\
  &+\kappa\int_0^{\infty}\!\!\io \uj\varphi-\mu\int_0^{\infty}\!\!\io \varphi \uj^2 \bigg)\\
  \leq& \int_0^{\infty}\!\!\io uΔ\varphi+\chi\int_0^{\infty}\!\!\io u\nabla\varphi\cdot\nabla\log(v)+\kappa\int_0^{\infty}\!\!\io u\varphi-\mu\int_0^{\infty}\!\!\io \varphi u^2.
 \end{align*}
 From Lemma \ref{lsgv} we therefore can conclude that $(u,v)$ is a very weak subsolution to (\ref{sys}).
\end{proof}

\begin{lemma}\label{sol}
 Under the assumptions of Theorem \ref{thm:main}, the pair $(u,v)$ is a weak logarithmic supersolution to \eqref{sys}. 
\end{lemma}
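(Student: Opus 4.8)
The plan is to imitate the proof of Lemma~\ref{sul}, now testing the first equation of \eqref{syseps} not by $\varphi$ itself but by $\frac{\varphi}{1+\ueps}$. Concretely, for a nonnegative $\varphi\in C_0^\infty(\Ombar\times[0,\infty))$ and $\eps>0$, testing that equation by $\frac{\varphi}{1+\ueps}$, using the identities $\frac{\Delta\ueps}{\ueps+1}=\Delta\log(\ueps+1)+|\nabla\log(\ueps+1)|^2$ and $\frac{\nabla\ueps}{\ueps+1}=\nabla\log(\ueps+1)$, and integrating by parts in space and time — all boundary terms disappearing because $\partial_\nu\ueps=\partial_\nu\veps=0$ force $\partial_\nu\log(\ueps+1)=0$ and $\frac{\ueps}{1+\eps\ueps}\partial_\nu\log(\veps)=0$ — I would arrive, by a computation parallel to the one in the proof of Lemma~\ref{lm4} but keeping $\varphi$ time-dependent, at the equality
\begin{align*}
 -\int_0^\infty\!\!\io& \log(\ueps+1)\varphi_t-\io\log(u_0+1)\varphi(\cdot,0)\\
 =& -\int_0^\infty\!\!\io\nabla\log(\ueps+1)\cdot\nabla\varphi+\int_0^\infty\!\!\io\varphi|\nabla\log(\ueps+1)|^2\\
 & +\chi\int_0^\infty\!\!\io\frac{\ueps}{(\ueps+1)(1+\eps\ueps)}\nabla\log(\veps)\cdot\nabla\varphi\\
 & -\chi\int_0^\infty\!\!\io\frac{\ueps}{(\ueps+1)(1+\eps\ueps)}\varphi\,\nabla\log(\veps)\cdot\nabla\log(\ueps+1)\\
 & +\kappa\int_0^\infty\!\!\io\frac{\ueps}{\ueps+1}\varphi-\mu\int_0^\infty\!\!\io\frac{\ueps^2}{\ueps+1}\varphi,
\end{align*}
that is, \eqref{eq:slol} for $\eps>0$ with an equality sign.

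Next I would let $\eps=\eps_j\searrow0$ and treat the terms individually. The left-hand side converges by \eqref{5}; on the right, $-\int_0^\infty\io\nabla\log(\uj+1)\cdot\nabla\varphi$ converges by \eqref{6}, $\kappa\int_0^\infty\io\frac{\uj}{\uj+1}\varphi$ by \eqref{9}, and $-\mu\int_0^\infty\io\frac{\uj^2}{\uj+1}\varphi$ by \eqref{10}. In the first chemotaxis integral I would use that, by \eqref{7} together with Lemma~\ref{aubinu} (boundedness, a.e.\ convergence, dominated convergence), $\frac{\uj}{(\uj+1)(1+\eps_j\uj)}\nabla\varphi\to\frac{u}{u+1}\nabla\varphi$ strongly in $L^2(\Omega\times(0,T))$ (with $T$ so large that $\supp\varphi\subseteq\Ombar\times[0,T)$), while $\nabla\log(\vj)\to\nabla\log(v)$ strongly in $L^2(\Omega\times(0,T))$ by Lemma~\ref{gradlogv}; hence the product converges. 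The delicate second chemotaxis integral is precisely the one flagged after Lemma~\ref{uvschwach}: here $\varphi\nabla\log(\vj)\to\varphi\nabla\log(v)$ strongly in $L^2(\Omega\times(0,T))$ by Lemma~\ref{gradlogv}, whereas $\frac{\uj}{(\uj+1)(1+\eps_j\uj)}\nabla\log(\uj+1)\rightharpoonup\frac{u}{u+1}\nabla\log(u+1)$ weakly in $L^2(\Omega\times(0,T))$ by \eqref{8}, and the $L^2$-inner product of a strongly and a weakly convergent sequence converges to the inner product of the limits.

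The only term not passing to the limit as an equality is $\int_0^\infty\io\varphi|\nabla\log(\ueps+1)|^2$: by \eqref{6} and weak lower semicontinuity of the convex functional $w\mapsto\int_0^\infty\io\varphi|w|^2$ (admissible since $\varphi\ge0$), one obtains only $\liminf_{\eps_j\searrow0}\int_0^\infty\io\varphi|\nabla\log(\uj+1)|^2\ge\int_0^\infty\io\varphi|\nabla\log(u+1)|^2$. Since the left-hand side of the $\eps_j$-identity converges and every right-hand term other than this one converges to its natural limit, this term converges too, to a value $L\ge\int_0^\infty\io\varphi|\nabla\log(u+1)|^2$; passing to the limit in the identity and then replacing $L$ by the smaller quantity $\int_0^\infty\io\varphi|\nabla\log(u+1)|^2$ — which is admissible exactly because it appears with a $+$ sign — turns the equality into the inequality \eqref{eq:slol}. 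Finally I would record the regularity required by Definition~\ref{def:slol}: nonnegativity of $u$ and a.e.\ positivity of $v$ as in Lemma~\ref{sul} (Remark~\ref{bem:vpos}), $u\in L^1_{\mathrm{loc}}([0,\infty);L^2(\Omega))$ from \eqref{1}, $v\in L^\infty_{\mathrm{loc}}(\Omega\times[0,\infty))\cap L^2_{\mathrm{loc}}([0,\infty);W^{1,2}(\Omega))$ from Lemmata~\ref{lm:veps} and \ref{lm7} together with a.e.\ convergence and \eqref{3}, $\nabla\log(u+1)\in L^2_{\mathrm{loc}}(\Omega\times[0,\infty))$ from \eqref{6}, $\nabla\log(v)\in L^2_{\mathrm{loc}}(\Omega\times[0,\infty))$ from Lemma~\ref{gradlogv}; and the weak formulation \eqref{second:weaksol} of the second equation is the content of Lemma~\ref{lsgv}. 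The main obstacle is exactly the bookkeeping around this single $\liminf$: one has to be certain that the one inequality introduced upon passing to the limit has the sign the supersolution inequality demands — beyond that, the argument only assembles convergences already available, of which the genuinely new ingredient is the strong $L^2$-convergence of $\nabla\log(\vj)$ furnished by Lemma~\ref{gradlogv}.
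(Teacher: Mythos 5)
Your proposal is correct and follows essentially the same route as the paper: testing the first equation of \eqref{syseps} by $\varphi/(\ueps+1)$, passing to the limit term by term via Lemma \ref{uvschwach} and the strong convergence of $\nabla\log(\vj)$ from Lemma \ref{gradlogv}, handling $\int_0^\infty\io\varphi|\nabla\log(\uj+1)|^2$ by weak lower semicontinuity (the paper phrases this via $\sqrt{\varphi}\,\nabla\log(\uj+1)\rightharpoonup\sqrt{\varphi}\,\nabla\log(u+1)$, you via convexity of $w\mapsto\int\varphi|w|^2$ — the same argument), and invoking Lemma \ref{lsgv} for \eqref{second:weaksol} together with the same regularity bookkeeping. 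The only cosmetic deviation is that in the first chemotaxis integral you upgrade \eqref{7} to strong convergence by dominated convergence, whereas the paper simply pairs the weak convergence \eqref{7} with the strong convergence of $\nabla\log(\vj)$; both are valid.
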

\begin{proof}
 In the previous proof we already noted that $u$ is nonnegative and $v$ is positive almost everywhere and that ${v\in L^2_{\text{loc}}([0,\infty);W^{1,2}(\Omega))}$ and ${\nabla\log(v)\in L^2_{\text{loc}}(\Ombar\times[0,\infty))}$ hold. Moreover, $u\in L^2_{\text{loc}}([0,\infty);L^2(\Omega))$ and thus, according to Hölder's inequality also ${u\in L^1_{\text{loc}}([0,\infty);L^2(\Omega))}$. \\By Lemma \ref{lm:veps}, furthermore, $v\in L^{\infty}_{\text{loc}}(\Ombar\times[0,\infty))$. In addition, Lemma \ref{uvschwach} and assertion (\ref{6}) imply $\nabla\log(u+1)\in L^2_{\text{loc}}([0,\infty);L^2(\Omega))$.\\

 We now let $\varphi\in C_0^{\infty}(\overline{\Omega}\times [0,\infty))$ be a nonnegative function satisfying $\partial_\nu \varphi=0$ on  $\partial\Omega$. 
Testing the first equation of \eqref{syseps} by $\frac{\varphi}{\ueps+1}$, for every $ε>0$ we obtain via integration by parts
 \begin{align*}
  -\int_0^{\infty}\io \log(\ueps+1)\varphi_t-&\io \log(u_0+1)\varphi(\cdot,0)\\
  =&-\int_0^{\infty}\!\!\io \nabla\log(\ueps+1)\cdot\nabla\varphi + \int_0^{\infty}\!\!\io |\nabla\log(\ueps+1)|^2\varphi\\
  &+ \chi \int_0^{\infty}\!\!\io \frac{\ueps}{(1+\eps\ueps)(\ueps+1)}\nabla\log(\veps)\cdot\nabla \varphi \\
  &- \chi \int_0^{\infty}\!\!\io \frac{\ueps}{(1+\eps\ueps)(\ueps+1)}\varphi\nabla\log(\ueps+1)\cdot\nabla\log(\veps) \\
  &+\kappa \int_0^{\infty}\!\!\io \frac{\ueps}{\ueps+1}\varphi-\mu \int_0^{\infty}\!\!\io \frac{\ueps^2}{\ueps+1}\varphi.
 \end{align*}
 According to Lemma \ref{uvschwach}, (\ref{5}) and (\ref{6}), the terms 
 \begin{align*}
  \int_0^{\infty}\!\!\io \log(\uj+1)\varphi_t&\rightarrow \int_0^{\infty}\!\!\io \log(u+1)\varphi_t
  \end{align*}
  and 
  \begin{align*}
  \int_0^{\infty}\!\!\io \nabla\log(\uj+1)\cdot\nabla\varphi&\rightarrow \int_0^{\infty}\!\!\io \nabla\log(u+1)\nabla\varphi
 \end{align*}
 converge as $\eps_j\searrow0$. From (\ref{7}) and Lemma \ref{gradlogv} we may conclude that 
 \begin{align*}
  \int_0^{\infty}\!\!\io \frac{\uj}{(1+\eps_j\uj)(\uj+1)}\nabla\log(\vj)\cdot\nabla \varphi \rightarrow \int_0^{\infty}\!\!\io \frac{u}{u+1}\nabla\log(v)\cdot\nabla \varphi 
 \end{align*}
 as $\eps_j\searrow0$. 
 Analogously, (\ref{8}) of Lemma \ref{uvschwach} together with Lemma \ref{gradlogv} entails the convergence
 \begin{align*}
  \int_0^{\infty}\!\!\io \frac{\uj}{(1+\eps_j\uj)(\uj+1)}\varphi\nabla\log(\uj+1)\cdot\nabla\log(\vj) \\
  \rightarrow \int_0^{\infty}\!\!\io \frac{u}{u+1}\varphi\nabla\log(u+1)\cdot\nabla\log(v)\quad\text{as }\eps_j\searrow0.
 \end{align*}
Assertions (\ref{9}) and (\ref{10}) from Lemma \ref{uvschwach} moreover show that 
 \begin{align*}
  \int_0^{\infty}\!\!\io \frac{\uj}{\uj+1}\varphi &\rightarrow \int_0^{\infty}\!\!\io \frac{u}{u+1}\varphi \quad\text{and}\\
  \int_0^{\infty}\!\!\io \frac{\uj^2}{\uj+1}\varphi &\rightarrow \int_0^{\infty}\!\!\io \frac{u^2}{u+1}\varphi\quad\text{as }\eps_j\searrow0.
 \end{align*}
 Finally, \eqref{6} and boundedness and nonnegativity of $\sqrt{φ}$ yield 
 \begin{align*}
  \sqrt{\varphi}\nabla\log(\uj+1)\rightharpoonup\sqrt{\varphi}\log(u+1)\quad\text{in }L^2(\Omega\times(0,T))
 \end{align*}
and hence by weak sequential lower semicontinuity of the $L^2$-norm we obtain
 \begin{align*}
  \int_0^{\infty}\!\!\io \varphi |\nabla\log(u+1)|^2\leq\liminf_{\eps_j\searrow0}\int_0^{\infty}\!\!\io \varphi |\nabla\log(\uj+1)|^2.
 \end{align*}
In conclusion, this has shown that 
 \begin{align*}
  -\int_0^{\infty}\io \log(u+1)\varphi_t&-\io \log(u_0+1)\varphi(\cdot,0)\\
  =&\lim_{\eps_j\searrow 0}\left( -\int_0^{\infty}\!\!\io \log(\uj+1)\varphi_t-\io \log(u_0+1)\varphi(\cdot,0)\right)\\
  =&\lim_{\eps_j\searrow 0}\bigg(-\int_0^{\infty}\!\!\io \nabla\log(\uj+1)\cdot\nabla\varphi + \int_0^{\infty}\!\!\io |\nabla\log(\uj+1)|^2\varphi\\
  &+ \chi \int_0^{\infty}\!\!\io \frac{\uj}{(1+\eps_j\uj)(\uj+1)}\nabla\log(\vj)\cdot\nabla \varphi \\
  &- \chi \int_0^{\infty}\!\!\io \frac{\uj}{(1+\eps_j\uj)(\uj+1)}\varphi\nabla\log(\uj+1)\cdot\nabla\log(\vj) \\
  &+\kappa \int_0^{\infty}\!\!\io \frac{\uj}{\uj+1}\varphi-\mu \int_0^{\infty}\!\!\io \frac{\uj^2}{\uj+1}\varphi \bigg)\\
  \geq& -\int_0^{\infty}\!\!\io\nabla\log(u+1)\cdot\nabla\varphi+\int_0^{\infty}\!\!\io\varphi|\nabla\log(u+1)|^2\\
  &+\chi\int_0^{\infty}\!\!\io\frac{u}{u+1}\nabla \log(v) \cdot \nabla\varphi\\
  &-\chi \int_0^{\infty}\!\!\io \frac{u}{u+1}\varphi \nabla\log(v)\cdot\nabla \log(u+1) \\
  &+ \kappa\int_0^{\infty}\!\!\io\frac{u}{u+1}\varphi-\mu\int_0^{\infty}\!\!\io\frac{u^2}{u+1}\varphi,
 \end{align*}
wherefore the claim results from Lemma \ref{lsgv}.
\end{proof}

Thus the existence of a global generalized solution is proven: 

\begin{proof}[Proof of Theorem \ref{supersatz}]
 By definition \ref{def:lsg} of generalized solutions, from Lemma \ref{sul} and Lemma \ref{sol} we immediately have that $(u,v)$ is a generalized solution to \eqref{sys}. 
\end{proof}
\begin{proof}[Proof of Theorem \ref{thm:main}]
 According to Theorem \ref{supersatz}, the functions obtained in Lemma \ref{aubinu} and Lemma \ref{aubinv} are a global generalized solution. This proves the existence of a global generalized solution. 
\end{proof}

{\footnotesize

}

\end{document}